\documentclass[reqno]{amsart}
\usepackage{graphicx}

\usepackage{natbib}
\usepackage{bbm,amsmath, amsfonts, amsthm, amssymb, mathrsfs}
\usepackage{bm}
\usepackage[margin=3.1cm]{geometry}
\usepackage{xcolor}
\usepackage{graphicx,tabularx,enumitem}
\usepackage{mathtools}
\graphicspath{{Figures/}}

\usepackage{setspace}
\usepackage{fancyvrb} 

\usepackage{amsaddr}

\DeclareRobustCommand{\SkipTocEntry}[5]{}

\usepackage{etoolbox}
\patchcmd{\section}{\scshape}{\bfseries\large}{}{}
\patchcmd{\subsubsection}{\itshape}{}{}{}
\makeatletter
\def\@seccntformat#1{\csname the#1\endcsname.\space}
\makeatother

\usepackage[textsize=tiny]{todonotes}
\newcommand{\DD}{\mathbb{D}}
\newcommand{\EE}{\mathbb{E}}

\newcommand{\NN}{\mathbb{N}}
\newcommand{\PP}{\mathbb{P}}
\newcommand{\RR}{\mathbb{R}}

\newcommand{\D}{\mathrm{d}}
\newcommand{\ds}{\mathrm{d}s}
\newcommand{\dr}{\mathrm{d}r}
\newcommand{\dt}{\mathrm{d}t}
\newcommand{\du}{\mathrm{d}u}

\newcommand{\Df}{\bm{\mathrm{D}}}

\newcommand{\Bb}{\mathcal{B}}
\newcommand{\Cc}{\mathcal{C}}
\newcommand{\Dd}{\mathcal{D}}

\newcommand{\Ff}{\mathcal{F}}
\newcommand{\Hh}{\mathcal{H}}

\newcommand{\Kk}{\mathcal{K}}

\newcommand{\Tt}{\mathcal{T}}

\newcommand{\Xx}{\mathcal{X}}

\newcommand{\ep}{\varepsilon}

\newcommand{\bx}{\bm{x}}
\newcommand{\bv}{\bm{v}}
\newcommand{\bX}{\bm{X}}
\newcommand{\bY}{\bm{Y}}
\newcommand{\bZ}{\bm{Z}}

\newcommand{\scrD}{\mathscr{D}}
\newcommand{\scrI}{\mathscr{I}}
\newcommand{\scrM}{\mathscr{M}}

\newcommand{\Wh}{\widehat{W}}

\newcommand{\half}{\frac{1}{2}}
\newcommand{\one}{\mathbbm{1}}
\newcommand{\abs}[1]{\left\lvert#1\right\rvert}
\newcommand{\norm}[1]{\left\lVert#1\right\rVert}

\newcommand{\sft}{{\mathsf{t}_0}}
\newcommand{\KH}{K_{H}}
\newcommand{\Ks}{K_\sigma}

\newcommand{\notthis}[1]{}
\newtheorem{theorem}{Theorem}[section]
\newtheorem{corollary}[theorem]{Corollary}
\newtheorem{lemma}[theorem]{Lemma}
\newtheorem{proposition}[theorem]{Proposition}

\theoremstyle{definition}

\newtheorem{remark}[theorem]{Remark}
\newtheorem{assumption}{Assumption}

\newtheorem{assumptionalt}{Assumption}[assumption]

\newenvironment{assumptionp}[1]{
  \renewcommand\theassumptionalt{#1}
  \assumptionalt
}{\endassumptionalt}

  {}

\theoremstyle{plain}

\numberwithin{equation}{section}

\definecolor{ocean}{rgb}{0,0.1,0.6}

\definecolor{imperialGreen}{RGB}{2,137,59}
\definecolor{imperialBlue}{RGB}{0, 62, 116}
\definecolor{imperialBrick}{RGB}{165,25,0}
\definecolor{imperialProcess}{RGB}{0,133,202}

\def\imperialBrick#1{\textcolor{imperialBrick}{#1}}

\usepackage{hyperref}
\hypersetup{colorlinks=false, linkbordercolor=purple,
citebordercolor=imperialProcess}
\usepackage{cleveref}

\title{Functional integration by parts formulae for stochastic Volterra processes}
\author{Alexandre Pannier}
\address{Université Paris Cité, Laboratoire de Probabilités Statistique et Modélisation}
\email{pannier@lpsm.paris}
\date{\today}
\thanks{\emph{Acknowledgments:} I would like to thank Tom Cass for introducing me to the Bismut--Elworthy--Li formula and Ioannis Gasteratos for many insightful discussions.}
 \subjclass[2020]{60G22, 60H20, 45D05, 60H07, 26A33, 91G20}
\keywords{Integration by parts, Bismut--Elworthy--Li formula, Stochastic Volterra equations, Volterra processes, fractional calculus, Malliavin calculus, rough volatility, forward variance models}  

\begin{document}

\begin{abstract}
We investigate integration by parts (IBP) formulae for stochastic Volterra equations and we establish the smoothing effect of the expectation. Due to the inherent path-dependent dynamics of this class of processes, standard Bismut--Elworthy--Li (BEL) formulae and lifting procedures fail to produce representations for directional derivatives with respect to the initial curve. We exhibit a new type of fractional IBP for these derivatives which, by means of the Riemann--Liouville fractional derivative, interpolates between the standard chain rule and a pure BEL formula with Cameron--Martin path directions. 
Our assumptions describe precisely the trade-off between the direction's and the test function's regularities. Crucially, we reveal that more roughness leads to more smoothing: 
for a power-law kernel with Hurst parameter~$H\in(0,1/2)$, we show that the expectation is differentiable along constant directions provided that the test function has H\"older continuity~$\beta>2H$.
The proof of the formula relies on a careful analysis of the conditional expectation's temporal regularity and on the well-posedness of its Riemann--Liouville derivative. 
We complement these results with a BEL formula along all square integrable directions whenever the noise is additive, a second order BEL formula and an application to forward and rough volatility models. In the latter case, the derivative is interpreted as the sensitivity with respect to the whole initial forward variance curve.
\end{abstract}

\maketitle

\tableofcontents

\section{Introduction}

Various questions pertaining to the law of stochastic processes can be tackled through the medium of Integration By Parts (IBP) formulae: existence and regularity of a density, gradient bounds and smoothing of the Markov semigroup, strong Feller property, sensitivity analysis (the so-called ``Greeks'' in finance). In the sphere of non-Markovian processes, these questions remain largely unexplored. Therein lie Stochastic Volterra Equations (SVE), which display temporal dependencies that make them suitable for modeling phenomena exhibiting memory. They extend Stochastic Differential Equations (SDE) by introducing kernels in the dynamics which capture the history of the process:
\begin{align}\label{eq:SVE_intro}
    \bX^{\bx}_t = \bx(t) + \int_\sft^t K_b(t,s)b(s,\bX^{\bx}_s)\ds + \int_{\sft}^t K_\sigma(t,s)\sigma(s,\bX^{\bx}_s)\D W_s,\quad  t\in\Tt:=[\sft,T],
\end{align}
where~$0\le\sft\le T$, $\bx:\Tt\to\RR^d$ lies in a Banach space $\Xx$, the kernels $K_b,K_\sigma:\RR_+^2\to\RR^{d\times d}$ are respectively one and two times locally integrable, $b$ and $\sigma$ are measurable maps and $W$ is an $m$-dimensional $(\Ff_t)_{t\ge0}$-Brownian motion.

Stochastic Volterra processes include fractional Brownian motions, anomalous diffusions \cite{henry2010introduction}, generalised Langevin equations \cite{li2017fractional,kupferman2004fractional} and rough volatility models \cite{bayer2023rough}, hence their range of application spans from physical systems to financial modeling, also touching upon turbulent flow velocities \cite{barndorff2008stochastic, chevillard2017regularized}, spatially disordered systems \cite{chen2015fractional} and electricity price modeling \cite{barndorff2013modelling}. They also arise as fundamental equations out of the Mori--Zanzwig formalism, a dimension reduction method where the so-called ``unresolved'' variables are incorporated into a memory kernel~\cite{kupferman2004fractional,givon2005existence}. These applications in turn fueled theoretical advances pertaining to well-posedness and invariance properties \cite{abi2019affine,abi2024polynomial,hamaguchi2025weak}, limit theorems \cite{jacquier2022large,gehringer2022functional,friesen2024volterra,hamaguchi2026exponential} and connections with PDEs \cite{viens2019martingale,wang2022path, bonesini2023rough, gasteratos2025kolmogorov}, to name just a few.

\subsection{Goals.}\label{subsec:goals} The first objective of this work is to identify measurable test functions~$\phi:\Xx\to\RR$, directions $h\in\Xx$ and weights $\pi\in L^2(\Omega;\RR)$ such that the IBP formula
\begin{align}\label{eq:IBP_intro}
    D\Phi(\bx)(h) = \EE[\phi(\bX^{\bx}) \pi]
\end{align}
holds, where $\Phi(\bx) := \EE[\phi(\bX^{\bx})]$ and $D\Phi(\bx)(h)$ is a directional derivative of~$\Phi$ along the direction~$h$ (it is a Fréchet derivative in the remainder of this work). The following two aspects of IBP formulae, widely discussed in the literature, motivate our study:
\begin{description} 
    \item[Smoothing] One would like~$\Phi$ to be differentiable even when~$\phi$ is not, which would be a testimony of the smoothing effect of integration against the law of the Volterra process (or marginal law if~$\phi(\bx)=\varphi(\bx(T))$ for some~$\varphi:\RR^d\to\RR$, which we call the state-dependent case henceforth).
    \item[Tractability]  Of utmost importance for further applications is to identify a ``computable" weight~$\pi$. In the case of standard Bismut--Elworthy--Li (BEL) formula~\cite{bismut1981martingales,elworthy1994formulae}, this weight takes the form of a stochastic integral.
\end{description}
As a specifity of this work, we will also ask whether the test function can be path-dependent. Furthermore, the additional degree of freedom awarded by the choice of~$h$ interrogates the range of admissible directions. This latter consideration drives our journey and leads us to not one formula but a family thereof. They are fractional variants of~\eqref{eq:IBP_intro} and take the form
\begin{align}\label{eq:fIBP_intro}
    D\Phi(\bx)(h) = \EE\Big[\scrD^\alpha_{T-}\big(\phi(\bX^{\bx})-\EE[\phi(\bX^{\bx}) \lvert\Ff_{\cdot}] \big)(\sft) \pi_\alpha \Big], \quad \alpha\in(0,1/2),
\end{align}
where the linear operator $\scrD^\alpha_{T-}$ denotes the right Riemann--Liouville fractional derivative, which definition is given in Section~\eqref{eq:def_fracD_right}. The parameter~$\alpha$ installs a trade-off between the regularity of~$\phi$ and the regularity of~$h$, thereby characterising the smoothing effect mentioned above. Formula~\eqref{eq:fIBP_intro} also bridges the gap between~\eqref{eq:IBP_intro} as $\alpha\to0$ and the standard chain rule when~$\phi\in C^1(\Xx)$.

\subsection{Contributions}\label{subsec:contributions}

Standard Malliavin calculus techniques, such as~\cite{fournie1999applications}, are based on a link between the Malliavin derivative~$\Df_s \bX^{\bx}_t$ and the tangent process~$\bY^{h}_t$ (the Fréchet derivative of~$\bx\mapsto\bX^{\bx}_t$ along~$h$) at all times~$s\le t$. Instead, the starting point of our approach is the simple observation that it is sufficient to find~$\eta\in L^2(\Omega\times\Tt;\RR^m)$ such that~$\langle \Df \bX^{\bx}_t, \eta\rangle_{L^2(\Tt;\RR^m)} = \bY^{h}_t$. Indeed, an application of the integration by parts of Malliavin calculus then yields
\begin{align}\label{eq:proof_comp_intro}
    D\Phi(\bx)(h) = \EE\big[D\phi(\bX^{\bx})(\bY^{h})\big]
    &= \EE\big[\langle \Df \phi(\bX^{\bx}), \eta\rangle_{L^2(\Tt;\RR^m)}\big]
    =\EE\left[\phi(\bX^{\bx}) \int_{\sft}^T \big\langle \eta_s,\D W_s\big\rangle\right],
\end{align}
where~$\langle\cdot,\cdot\rangle$ denotes the inner product in~$\RR^m$. 
This identity between these two functional derivatives is fulfilled at all times~$t\in\Tt$ by setting~$h(t) =\int_{\sft}^t K_\sigma(t,s)h^*(s)\ds$ and~$\eta_s=\sigma(s,\bX^{\bx}_s)^{-1} h^*(s)$. For~\eqref{eq:proof_comp_intro} to make sense, we make the ellipticity assumption~$\sup_{t,x}\abs{\sigma(t,x)^{-1}}\le C$ and we let~$h^*$ be square integrable. The space of such directions~$h$, henceforth called~$\Hh$, is nothing else than the Cameron--Martin space associated with the Gaussian process~$(\int_{\sft}^t K_\sigma(t,s)\D W_s)_{t\in\Tt}$. Three main types of formulae unfold from this approach, each with a different balance between the assumptions. \medskip

\textbf{(IBP)} The first one, which can be found in Theorem~\ref{th:main_IBP}, is the functional IBP of the form
\begin{align}\label{eq:IBP_intro2}
    D\Phi(\bx)(h) = \EE\left[\phi(\bX^{\bx}) \int_{\sft}^T \big\langle (\sigma(s,\bX^{\bx}_s))^{-1} h^*(s),\D W_s\big\rangle\right].
\end{align}
The space of admissible directions is $\Hh$, hence the expectation does have a regularisation effect in those directions and we demonstrate it by extending the class of test functions for which~\eqref{eq:IBP_intro2} holds. We rely for this procedure on the Stone--Weierestrass theorem of~\cite[Proposition 2.6]{cuchiero2026ramifications} which holds on non-locally compact spaces. This result requires the introduction of a weighted space, described in Section~\ref{subsec:weighted_space}, which includes in particular continuous test functions~$\phi:\Cc^\gamma(\Tt)\to\RR$ with polynomial growth for any~$\gamma$ small enough, where~$\Cc^\gamma(\Tt)$ is the space of $\gamma$-H\"older continuous paths.

To the best of our knowledge, Equation~\eqref{eq:IBP_intro2} is the first functional IBP for Volterra processes. It also sheds new light on the semimartingale case with path-dependent $\phi$ as it complements the work of~\cite{jazaerli2017functional}. In contrast with the latter, where the so-called vertical derivative of the functional~$\Phi$ perturbs the path only at the point~$\sft$, our formula computes the derivative with respect to the entire initial curve~$\bx$. 

 \smallskip

\textbf{(f-IBP)} Since $h^*$ is compelled to be square integrable to give meaning to~\eqref{eq:IBP_intro2}, $\Hh$ does not include constant paths other than zero. Our main contribution, Theorem~\ref{thm:frac_IBP}, aims at enhancing the range of admissible directions by trading some regularity from the test function and back to the weight. Parting ways with tradition, we interfere with the Malliavin IBP~\eqref{eq:proof_comp_intro} by introducing a power-law in the weight:
\begin{align*}
    \EE\big[\langle \Df \phi(\bX^{\bx}), \eta\rangle_{L^2(\Tt;\RR^m)}\big]
    = \EE\left[\int_{\sft}^T (s-\sft)^{-\alpha}\big\langle\EE\left[\Df_s \phi(\bX^{\bx}) \lvert\Ff_s\right],  \D W_s \big\rangle\int_{\sft}^T  (s-\sft)^{\alpha}\big\langle \eta_s,\D W_s\big\rangle\right],
\end{align*}
and we identify the first stochastic integral as the Riemann--Liouville derivative
\begin{align*}
    \int_{\sft}^T (s-\sft)^{-\alpha} \D M_s = \Gamma(1-\alpha)\scrD^\alpha_{T^-}(M_T-M_\cdot)(\sft),
\end{align*}
where~$M_t=\EE[\phi(\bX^{\bx})\lvert\Ff_t]$. 
The main technical aspect of the proof, Lemma~\ref{lemma:DalphaM_is_Malpha}, provides sufficient conditions under which it is a legitimate square integrable random variable\footnote{This object is related to the ``fractional martingale'' introduced in~\cite{hu2009fractional}.}. More precisely, when the test function is state-dependent $\phi(\bx)=\varphi(\bx(T))$ and~$\varphi\in\Cc^\beta(\RR^d)$, $\beta\in(0,1)$, we prove through a fine analysis of $M$'s regularity that a sufficient condition is~$\alpha<\beta/2$. The subsequent family of fractional IBP formulae~\eqref{eq:fIBP_intro}, for $0\le\alpha<\beta/2$, can be viewed as an interpolation between the chain rule~$\EE[D\varphi(\bX^{\bx}_T)\bY^h_T]$ when~$\varphi\in C^1(\RR^d)$ (as $\beta\to1$) and the first IBP formula~$\EE\left[\varphi(\bX^{\bx}_T)\int_{\sft}^T \langle\eta_s,\D W_s\rangle\right]$ when~$\varphi$ merely has polynomial growth (as $\beta\to0$).  
As expected, this creates a trade-off between the smoothness of the test function and the range of admissible directions, which is enlarged for all~$\alpha<\beta/2$ to
\begin{align}
    \Hh_{\alpha} = \left\{\int_{\sft}^\cdot K_\sigma(\cdot,s)h^*(s)\ds \;\Big\lvert \int_{\sft}^T s^{2\alpha} \abs{h^*(s)}^2\ds <\infty\right\} \cap C(\Tt;\RR^d).
\end{align}
As $\alpha$ decreases, $\Hh_\alpha$ shrinks (until $\Hh_0=\Hh$) and the class of admissible test functions grows. In particular, in the case of a power-law kernel~$K_\sigma(t,s)=(t-s)^{H-1/2}$, constant functions belong to~$\Hh_\alpha$ for all $\alpha>H$ and thus~$\Phi$ is differentiable along constant directions for all~$\beta>2H$. Note that the Hurst exponent~$H\in(0,1)$ corresponds (almost) to the H\"older regularity of~$\bX^{\bx}$'s trajectories while the space~$\Hh_\alpha$ grows larger as~$H$ becomes small. Therefore this is a situation where rougher paths (equivalently more explosive kernels) leads to more regularisation, analogously to the problem of regularisation by fractional noise~\cite{catellier2016averaging,anzeletti2023regularisation}. 

By twisting a little the definition of the functional derivative, we exhibit in Corollary~\ref{coro:fIBP_singular} that one can even consider unbounded directions such as~$h(t)=(t-\sft)^{\gamma-1/2}$ for $\gamma>1/2+H-\alpha$. For such a class of directions and any $\alpha<\beta/2$, we can also illustrate the derivative's regularity through the following bound (see \eqref{eq:gradient_bound})
\begin{align}\label{eq:gradient_bound_intro}
    \frac{\abs{D\Phi(\bx)(h)}}{h(T)} 
    \lesssim (T-\sft)^{H(\beta-1)}.
\end{align}
This ties in with the notion of fractional smoothness described in \cite{geiss2011fractional}. 

Unfortunately, differentiating in the direction of the kernel~$K_\sigma(\cdot,\sft)$ remains out of reach simply because the solution~$h^*$ to~$K_\sigma(t,\sft) = \int_{\sft}^t K_\sigma(t,s)h^*(s)\ds$ for all~$t\in\Tt$ is a Dirac mass (and hence not an admissible stochastic integrand). \smallskip

\textbf{(add-IBP)} We show in Proposition~\ref{eq:additive_IBP} how to circumvent this last issue. When the test function is state-dependent, it is only necessary to have the identity~$\langle \Df\bX^{\bx}_T,\eta\rangle_{L^2(\Tt;\RR^m)}=\bY^{h}_T$ and, under the further condition of additive noise ($\sigma$ independent of~$\bX^{\bx}$), it is possible to select a specific~$\eta$ that achieves this goal. This method, first used in~\cite{fan2015stochastic}, yields a functional IBP for any square integrable direction including the kernel~$K_\sigma(\cdot,\sft)$. The motivation for differentiating in this direction is presented in Section~\ref{sec:motivation}.
\medskip

The three IBP formulae we presented meet the objectives we set in Section~\ref{subsec:goals} with different sets of assumptions, compromising between the range of test functions, admissible directions and type of noise. In summary:
\begin{description}
    \item[Smoothing] The smoothing effect is characterised for directions ranging from~$\Hh$ to~$L^2(\Tt)$ and passing through~$\Hh_\alpha$ for all~$\alpha\in(0,1/2)$. In the latter case, differentiability is guaranteed for test functions with H\"older regularity~$\beta>2\alpha$. 
    For power-law kernel and direction, we describe the smoothing as an interplay between the parameters and we complement it with the gradient bound~\eqref{eq:gradient_bound_intro}.
    \item[Tractability] The weight~$\pi_\alpha=\int_{\sft}^T (s-\sft)^\alpha \big\langle \sigma(s,\bX^{\bx}_s)h^*(s),\D W_s\big\rangle$ arises by directly exploiting the deterministic direction~$h$. This approach bypasses the invertibility of the Malliavin derivative and of the Malliavin matrix. As a consequence it does not feature auxiliary processes as in the classical BEL formula and can be computed in a straigthforward way.
\end{description}
For convenience, we include in Table~\ref{tab:Summary} a comparison of the three formulae along with their necessary conditions.
\begin{table}[ht!]
\bgroup
\def\arraystretch{1.2}
 \begin{tabularx}{1.02\textwidth}
 {|p{0.15\textwidth} | p{0.24\textwidth} | p{0.19\textwidth} |p{0.18\textwidth} | p{0.14\textwidth}|}
 \hline
 \textbf{IBP formula} & \textbf{Adm. test functions} & \textbf{Adm. directions} & $h(t)=(t-\sft)^{\gamma-\frac12}$ & \textbf{Noise}\\
 \hline
 Theorem \ref{th:main_IBP} \par  \textbf{(IBP)}
 & Path-dep. \& continuous 
 \par 
 \emph{or} 
 state-dep. \& $L^p(\Omega)$
 & $\Hh$
 & $\gamma>1/2+H$
 & multiplicative
 \\
 \hline
 Theorem \ref{thm:frac_IBP}\par  \textbf{(f-IBP)}
 & State-dep. \& $\beta>0$-H\"older
 & $\Hh_\alpha,\,\alpha<\beta/2$
 & $\gamma>1/2+H-\alpha$
 & multiplicative\\
 \hline 
 Proposition \ref{prop:IBP_additive}\par  \textbf{(add-IBP)}
 & State-dep. \& $L^p(\Omega)$
 & $L^2(\Tt;\RR^d)$
 & $\gamma>0$
 & additive\\
 \hline
 \end{tabularx}
 \egroup
\caption{Comparison of the main IBP formulae along with their assumptions. The mention~$L^p(\Omega)$ refers to the condition that~$\varphi(\bX^{\bx}_T)\in L^p(\Omega)$ for some~$p>2$.}
\label{tab:Summary}
\end{table}

\smallskip
\textbf{(2-IBP)} Along the same lines as Theorem~\ref{th:main_IBP}, an IBP formula for the second order derivative is presented in Proposition~\ref{prop:IBP_2}. Its proof requires to study the Fréchet and Malliavin derivatives of the weight~$\pi$, for once again we need to spot the link between them. When the dust settles, the formula holds for any two directions~$g,h\in\Hh$. Second order IBP formulae exist for the geometric Brownian motion \cite{fournie1999applications}, continuous semimartingales \cite[Theorem 2.3]{elworthy1994formulae}, semimartingales driven by jumps \cite[Theorem 3]{cass2006bismut} and in functional form under the Dupire derivative \cite[Theorem 4.17]{jazaerli2017functional}. This is however the first such instance for Volterra processes. \smallskip

\textbf{(SV-IBP)} Eventually, we return to one of the early motivations of this work: rough volatility models. The asset price dynamics we consider are
\begin{equation}\label{eq:SV_model_intro}
\left\{
    \begin{aligned}
    &\D S_t^{\bv} = S_t^{\bv} \zeta(V_t^{\bv}) \D W_t, \quad S^{\bv}_0>0, \\
    &V_t^{\bv}=\bv_t + \int_{\sft}^t K_b(t,s)b(s,V_s^{\bv})\ds + \int_{\sft}^t K_\sigma(t,s)\sigma(s,V_s^{\bv})\D B_s,   
    \end{aligned}
\right.
\end{equation} 
where $W$ and $B$ are Brownian motions such that~$\EE[W_t B_t]= \rho t$ for a correlation parameter~$\rho\in(-1,1)$. This model also includes a wide range of forward variance models such as the Bergomi and Heston classes (whether of Volterra type or not). The initial datum~$\bv$ plays the role of the initial forward variance curve, a path that can be constructed from the (indirectly) quoted variance swaps. Being an infinite-(or high-)dimensional object, finite-difference methods are ill-suited to study the sensitivity with respect to this parameter. Therefore the following IBP formula, stated in more details in Proposition~\ref{prop:IBP_roughvol}, provides a convenient packaging for the derivative of the option price~$\Phi(\bv)=\EE[\phi(S^{\bv})]$:
\begin{align}\label{eq:SVIBP_intro}
    D\Phi(\bv)(h)= \EE\left[\phi(S^{\bv}) \int_{\sft}^T \frac{h^*(s)}{\sqrt{1-\rho^2} \sigma(s,V_s^{\bv})} \D W_s\right].
\end{align}
Formula~\eqref{eq:SVIBP_intro} yields an exact, unbiased Monte Carlo estimator for pathwise sensitivities without hinging on finite-difference approximations. 
This functional derivative is effectively a Greek that, as far as we are aware, has not been given a name yet. It is nevertheless related to the notion of ``forward Vega'' used to denote the sensitivity to a shock in the volatility instruments (e.g. variance swaps) used to hedge one's position.

\subsection{The motivation for functional derivatives}\label{sec:motivation}
The feedback from its past states makes a stochastic Volterra process inconsistent in time when viewed only from its own finite-dimensional state space. For an illustration, let us look at the following example where we set~$\bx(t)\equiv x_0$ and~$b\equiv0$ and try to restart the process at a later time~$\sft>0$ with the same dynamics:
\begin{equation}\label{eq:flow_intro}
    \begin{aligned}
    \bX^{\bx}_t 
    &= x_0 + \int_0^t K_\sigma(t,s) \sigma(\bX^{\bx}_s)\D W_s \\
    &= \underbrace{x_0 + \int_0^{\sft}  K_\sigma(t,s) \sigma(\bX^{\bx}_s)\D W_s}_{=:\widetilde{\bX}^{\sft,\bx}_t} + \int_{\sft}^t K_\sigma(t,s) \sigma(\bX^{\bx}_s)\D W_s.
\end{aligned}
\end{equation}
This suggests that consistency can only be preserved by plugging in an \emph{initial curve} $(\widetilde{\bX}^{\sft,\bx}_t)_{t\ge0}$ which is different from $\bX_{\sft}^{\bx}$, even if~$\bx$ is constant. Under standard assumptions that guarantee pathwise uniqueness of the SVE we can indeed recover the flow property~$\bX^{0,\bx} = \bX^{\sft,\widetilde{\bX}^{\sft,\bx}}$, where~$\bX^{\sft,\bx}$ solves the SVE started at time~$\sft$ with initial curve~$\bx$. Consequently, for any measurable~$\varphi:\RR^d\to\RR$ there exists a measurable deterministic function~$u:[0,T]\times\Xx\to\RR$ such that~$\EE[\varphi(\bX_T^{\bx})\lvert\Ff_{\sft}] = u(\sft,\widetilde{\bX}^{\sft,\bx})$. The latter can be interpreted as a Markov property in the infinite-dimensional space~$\Xx$. The auxiliary process~$\widetilde{\bX}^{\sft,\bx}$ is thus the infinite-dimensional state variable that adequately captures the history of~$\bX^{\bx}$ and, for consistency at~$\sft=0$, the initial path~$\bx$ replaces the starting point~$x_0$ of a Markovian SDE. 

The observation that an augmented version of the Volterra process (called the \emph{lift} in the literature) retrieves the Markov property in infinite dimensions dates back to~\cite{carmona1998fractional} and~\cite{hairer2005ergodicity} and the ramifications of this idea have seen a tremendous inflation in the last few years; we refer the interest reader to~\cite{gasteratos2025kolmogorov} and the references therein. The more pathwise considerations are spelled out by Viens and Zhang \cite{viens2019martingale} who exploited the flow property to derive a functional Itô formula for $f(\sft,\widetilde{\bX}^{\sft,\bx})$. This is further developed in~\cite{gasteratos2025kolmogorov} to complete the rigorous connection between SVEs and PDEs by identifying $u$ as the unique solution to a Kolmogorov equation on a Hilbert space. Both these works, although taking different paths, highlight the central importance of the first and second order directional derivatives of $u$ with respect to $\bx$ along the direction of the kernel~$K_\sigma$ in both the Itô formula and the PDE. Such derivatives along unbounded directions are defined as limits of derivatives along continuous approximations~$K^\delta$. 

The existence of such derivatives is usually obtained via the chain rule by exploiting the regularity of the test function~$\varphi\in C^2(\RR^d)$ and, for the second derivative, under the additional condition that~$K_\sigma(\cdot,t)\in L^4([t,T])$\footnote{In the power-law case~$K_\sigma(t,s)=(t-s)^{H-1/2}\one_{s<t}$ the latter condition restricts the range of parameters to $H\in(1/4,1)$, see Remark 5.9 of \cite{gasteratos2025kolmogorov} for an intuitive explanation. This condition can be dropped if the noise is additive.}. The existence of alternative representations for these derivatives, such as~\eqref{eq:IBP_intro} or~\eqref{eq:fIBP_intro}, may allow to remove this constraint and to consider a larger class of test functions. In attempting to break free from smooth test functions, the question of the differentiability of~$u$ becomes intimately linked to the smoothing effect alluded to above. In the context of Markov processes one speaks of the smoothing of the Markov semigroup and of the ensuing strong Feller property, which importance in stochastic analysis is difficult to overstate.  

From an applied point of view, such differentials also tie in with the sensitivity analysis of option prices, a crucial aspect for risk analysis purposes. Under a forward or rough volatility model  such as~\eqref{eq:SV_model_intro} these sensitivities correspond to the functional derivative of the option price with respect to the initial forward variance curve. The computation of these ``Greeks'' gave rise to a considerable research effort as they allow for faster and more accurate computation via Monte Carlo methods, especially when the payoff is not differentiable and finite-difference methods fail~\cite{fournie1999applications,fournie2001applications,benhamou2001application,gobet2003computation, gobet2004revisiting, gobet2005sensitivity, cass2006bismut}.

\subsection{Relevant literature}

The aforementioned time-inconsistency problem becomes troublesome when trying to apply the classical approaches for IBP formulae: the martingale method of Elworthy--Li \cite{elworthy1994formulae} and the Malliavin calculus technique presented in \cite{fournie1999applications}. When $K_b=K_\sigma\equiv1$, both follow a similar recipe based on a martingale representation~$\varphi(\bX^{\bx}_T)=\EE[\varphi(\bX^{x}_T)] + \int_0^T \theta_s\D W_s$ and lead to a weight of the form~$\pi=\int_0^T \eta_s \D W_s$, for two adapted square integrable process~$\theta$ and~$\eta$. The former representation relies on the Itô formula, which is available for the SVE lift (see \cite[Theorem 4.18]{gasteratos2025kolmogorov}) and thus retrieves time-consistency in an infinite-dimensional space. Although IBP formulae exist in such a framework (see~\cite[Lemma 9.34]{da2014stochastic}), the necessary conditions are not met here because of the degeneracy of the (finite-dimensional) noise.  
Even worse, the strong Feller property, a consequence of the IBP formula, is guaranteed to fail \cite[Section 3.2]{hamaguchi2023markovian}. 
The second representation is based on the Clark--Ocone formula and on an identity linking, at all times, the Malliavin derivative of the process, its tangent process (the derivative with respect to initial datum) and the inverse of the latter. As we saw in~\eqref{eq:flow_intro}, any dynamical analysis inevitably introduces the auxiliary process~$\widetilde{\bX}^{\sft,\bx}$. Hence such an identity is most likely out of reach for general stochastic Volterra processes. 

Instead of the Itô integral~$\int_0^T \eta_t\D W_t$, a general IBP method allows to express the weight as an (anticipative) Skorohod integral \cite[Proposition 6.2.1]{nualart2006malliavin}. This approach is favoured by~\cite{al2023computation} under rough volatility models. Unfortunately, checking that the conditions are met in practice is on uncharted territory (one has to study the inverse of the Malliavin derivative, or of the Malliavin matrix) and the Skorohod integral does not satisfy the requirement of being easily computable as it is not, in general, the limit of a sum of increments. We discuss this further in Section \ref{subsec:optimality}.

Let us conclude this quick overview of traditional techniques by saying that if the density exists and is differentiable then the weight $\pi$ can be written as the gradient of its logarithm \cite{broadie1996estimating}. Still, IBP formulae are often used to collect information on the density and not the other way around. Despite the recent study of the density regularity for Volterra processes~\cite{friesen2024regular}, too little is known at this stage outside of the trivial Gaussian case.

As we also wish to treat path-dependent test functions~$\phi$, let us mention that IBP formulae are derived under semimartingale models for various types of path-dependent payoffs, including Asian, barrier and lookback options~\cite{benhamou2001application,gobet2003computation, gobet2004revisiting}. These concern standard derivatives with respect to the initial condition. More recently, \cite{jazaerli2017functional} leverage the Dupire functional calculus to study the vertical derivative of the functional~$\Phi$ under an assumption of ``weak path dependence'', when $\bX^{\bx}$ is a semimartingale. This derivative consists in perturbing the path at the single point~$\sft$. The BEL formula they obtained morally corresponds to~\eqref{eq:IBP_intro} when~$K_b=K_\sigma=I_d$ and~$h$ is constant. 

Finally, results for SVEs are in short supply. Regarding SVEs with additive noise and constant initial datum, Fan proves IBP formulae for the standard derivative in \cite{fan2015integration} and~\cite{fan2015stochastic} for bounded and unbounded kernels, respectively. It is the method of the latter paper that we revisit and extend to functional derivatives in Proposition~\ref{prop:IBP_additive} below. Restraining further to a constant kernel~$K_b$, temporal coherence is somehow preserved and a BEL formula is obtained accordingly in~\cite{amine2018bismut}.


\subsection{Open questions}
The functional IBP formulae are witnesses of the smoothing property of the Volterra process' law. Through the lens of the fractional derivative, we make progress in our understanding of this regularisation, its increase as trajectories become rougher, and the trade-offs one can obtain between the test function's and the direction's regularities. 

This shows that a smoothing does hold in certain directions for the SVE's lift when the test functions are limited to the class~$f=\varphi\circ {\rm proj}$, where~${\rm proj}$ denotes the projection that maps the lift to the Volterra process and~$\varphi:\RR^d\to\RR$. Analogously to the strong Feller property, this could be a starting point for studying the lift's invariant measures and the SVE's large-time behaviour.

A complete understanding of derivatives in the kernel direction remains open; only the additive noise case is treated in Proposition~\ref{prop:IBP_additive}. In particular, this would contribute to a better comprehension of the Kolmogorov PDE derived in~\cite{gasteratos2025kolmogorov} and could help relax the regularity assumptions on the kernels and the test functions.

The question of existence and smoothness of a density is not addressed via this approach either. 
An objective for future research, which seems essential to make headway on this problem, is to get a better grasp at the inverse of the Malliavin matrix. The latter remains somewhat elusive in a Volterra context.

\subsection{Outline}
The rest of the paper is organised as follows. Section~\ref{sec:setup} gathers various results about the SVE~\eqref{eq:SVE_intro}, its tangent process, Malliavin derivative and the correspondence between them. It ends with an introduction to the weighted spaces we will use as path-dependent test functions. In Section~\ref{sec:IBP_1} can be found the functional BEL formula with directions in~$\Hh$ (Theorem~\ref{th:main_IBP}), the additive noise IBP formula with directions in $L^2(\Tt)$ (Proposition~\ref{prop:IBP_additive}), and finally a discussion on these results' optimality. The fractional IBP with directions in $\Hh_\alpha$ (Theorem~\ref{thm:frac_IBP}) is presented along with its proof and corollaries in Section~\ref{sec:IBP_frac}. Then we state and prove the second order IBP formula (Proposition~\ref{prop:IBP_2}) in Section~\ref{sec:IBP_2}. We display the application to stochastic volatility models in Section~\ref{sec:SV} and round up a couple of proofs in Appendix~\ref{sec:appendix}. 


\section{Setup and preliminary results}\label{sec:setup}

\subsection{Notations}

The notation $\lesssim$ is used to denote inequality of real numbers up to unimportant constants. The absolute value, Euclidean norm in~$\RR^d$ and Frobenius norm in~$\RR^{d\times d}$ will all be denoted by~$\abs{\cdot}$. The inner product in $\RR^d$ will be denoted~$\langle\cdot,\cdot\rangle$.  The gradient of a function $f:\RR^d\rightarrow \RR$ will be denoted by $\nabla f.$
Let $0\le \sft <T<+\infty$ and $\Tt:=[\sft,T]$. 

For Banach spaces~$E_1, E_2$, let $\Cc(E_1; E_2)$, $\Cc_b(E_1; E_2)$ and~$\mathscr{L}(E_1; E_2)$ be the space of continuous functions, bounded continuous functions and linear functions from~$E_1$ to $ E_2$, respectively. 
We define~$\Xx:=\Cc(\Tt;\RR^d)$ endowed with the topology of uniform convergence. We denote the supremum norm~$\norm{\cdot}_{\infty}$. For $\alpha\in(0,1)$, we define the space of H\"older continuous functions~$\Cc^\alpha:=\Cc^\alpha(\Tt;\RR^d)$ equipped with its norm
\begin{align*}
    \norm{f}_{\alpha} = \norm{f}_\infty + \sup_{\sft\le s\neq t\le T} \frac{\abs{f(t)-f(s)}}{\abs{t-s}^\alpha}.
\end{align*}
As we shall use it later, we also introduce~$\Cc^\alpha_\infty$ as the space of $\alpha$-H\"older continuous functions equipped with the supremum norm~$\norm{\cdot}_{\infty}$. 

 Next, for a Banach space~$F$, the Fréchet derivative of~$f:E_1\to E_2$ at $x\in E_1$, if it exists, is the unique operator~$Df(x)\in\mathscr{L}(F; E_2)$ such that
\begin{align}
    \lim_{\norm{h}_{F}\searrow 0} \frac{\norm{f(x+ h)-f(x)-Df(x)}_{ E_2}}{\norm{h}_{F}}=0.
\end{align}
Note that $f(x+h)$ is well-posed if, for instance, $E_1$ is an open subset of~$F$ or if~$F\subset E_1$. 
We say that $f\in \Dd^1_{F}(E_1; E_2)$ if it is Fréchet differentiable at any point $x\in E_1$ and in any direction~$h\in F$. If, moreover, $x\mapsto Df(x)(h)$ is continuous then we write~$f\in \Cc^1_{F}(E_1; E_2)$. 
We further denote $f\in \Dd_{b,F}^1(E_1; E_2)$ if $f\in \Dd_{F}^1(E_1; E_2)$ and there is a constant $C>0$ such that~$\abs{f(x)}_{ E_2}\le C,\,\abs{Df(x)h}_{ E_2}\le C \norm{h}_{F}$ for all~$x\in E_1$, and analogously for~$f\in\Cc_{b,F}^1(E_1; E_2)$. In this case, as a linear and bounded operator, $Df(x)$ is also continuous. Moreover, denote by $\Cc^1_{b,F,{\rm Lip}}(E_1;E_2)$ the subspace of $\Cc^1_{b,F}(E_1;E_2)$ functions with Lipschitz continuous derivatives, i.e. $\abs{Df(x)h-Df(y)h}_{E_2}\le C \norm{x-y}_{E_2}\norm{h}_{F}$ for some constant $C>0$. When $E_1, E_2$ are finite-dimensional we omit the subscript~$F$. For all these spaces we will omit to write~$E_2$ if it is~$\RR$.

In a similar fashion, we define the second order Fréchet derivative of~$f:E_1\to E_2$ at~$x\in E_1$ as the unique operator~$D^2f(x)\in\mathscr{L}(F,\mathscr{L}(F; E_2))$ such that~$D^2f(x)(g,h)=D(Df(\cdot)(g))(x)(h)$, for~$g,h\in F$. The spaces~$\Cc^2_{F}(E_1; E_2),\Dd^2_{F}(E_1; E_2)$ are defined analogously to the previous section. We say that~$f\in\Dd^2_{b,F}$ if~$f\in\Dd^1_{b,F}\cap\Dd^2_{F}$ and~$\abs{D^2f(x)(g,h)}\le C \norm{h}_{F}\norm{g}_{F}$ for all~$x\in E_1$.

When $E_1$ and $F$ are spaces of $\RR^d$-valued functions and $E_2=\RR$ we will be using the same notations with $\RR^{d\times d'}$-valued directions~$ h_1,h_2$. In that context, they mean
    \begin{align*}
        Df(x)(h_1)=\big(Df(x)(h_1 e_j)\big)_{j=1}^{d'} \quad \text{and}\quad 
        D^2f(x)(h_1,h_2)=\big(D^2f(x)(h_1e_i,h_2e_j)\big)_{i,j=1}^{d'},
    \end{align*}
    such that for any~$u,v,w\in\RR^{d'}$ we have
    \begin{equation}\label{eq:AbuseofNotations}
    Df(x)(h_1 u)=\sum_{j=1}^{d'} Df(x)(h_1 e_j)u_j,
        \quad D^2f(x)(h_1v,h_2 w)
        = \sum_{i,j=1}^{d'} D^2f(x)(h_1 e_i,h_2 e_j) v_i w_j. 
    \end{equation}
    
Throughout this work we fix a complete filtered probability space $(\Omega, \Ff, \{\Ff_t\}_{t\geq 0}, \PP)$ that supports an $m-$dimensional standard Brownian motion $W.$ Adopting notations and definitions from~\cite[Section~1.2]{nualart2006malliavin}, we denote by~$\Df$ the  Malliavin derivative operator with respect to~$W$ and by~$\DD^{1,2}$ its domain of application in~$L^2(\Omega)$. 
The derivative of a random variable $F\in\DD^{1,2}$ is a stochastic process in~$L^2(\Tt\times\Omega)$ denoted~$(\Df_t F)_{t\in\Tt}$.

We need two more notions of derivatives: the left and right Riemann--Liouville fractional derivatives. Letting $\alpha\in(0,1)$, $a,b\in\Tt$ and~$f\in L^1(\Tt;\RR)$, they are respectively defined as
\begin{alignat}{2}
    &\scrD^\alpha_{a^+} f(t) := \frac{\D}{\D t} \scrI^{1-\alpha}_{a^+} f(t), \qquad &&\scrI^\alpha_{a^+} f(t):=\frac{1}{\Gamma(\alpha)} \int_a^t (t-s)^{\alpha-1} f(s)\ds \label{eq:def_fracD_left}\\
    &\scrD^\alpha_{b^-} f(t) := - \frac{\D}{\D t} \scrI^{1-\alpha}_{b^-} f(t), \qquad &&\scrI^\alpha_{b^-} f(t):=\frac{1}{\Gamma(\alpha)} \int_t^b (s-t)^{\alpha-1} f(s)\ds. \label{eq:def_fracD_right}
\end{alignat}
These operators satisfy~$\scrD^\alpha_{a^+} \scrI^\alpha_{a^+} f=\scrD^\alpha_{b^-} \scrI^\alpha_{b^-} f=f$.

\subsection{Stochastic Volterra Equations}
Let $d\in\NN$, $\bx\in\Xx$,  $K_b,K_\sigma:\RR_+^2\to\RR^{d\times d},\,b:\RR_+\times\RR^d\to\RR^d$ and~$\sigma:\RR_+\times\RR^d\to \mathscr{L}(\RR^m,\RR^d)$. We consider the $\RR^d$-valued SVE
\begin{align}\label{eq:main_SVE}
    \bX_t^{\bx} =\bx(t) + \int_\sft^t K_b(t,s)b(s,\bX^{\bx}_s)\ds + \int_\sft^t K_\sigma(t,s)\sigma(s,\bX^{\bx}_s) \D W_s.
\end{align}
Let~$\Kk_0$ be the class of kernels~$K:\Tt^2\to\RR^{d\times d}$, borrowed from~\cite{gripenberg1980resolvents,zhang2010stochastic}, such that $K(t,s)=0$ for any~$s>t$,
\begin{align*}
    t\mapsto \int_\sft^t K(t,s)\ds \in L^\infty(\Tt), \quad \text{and} \quad \limsup_{\ep\to0} \norm{\int_{\cdot}^{\cdot+\ep}K(\cdot+\ep,s)\ds}_{\infty} =0.
\end{align*}
As the prototypical one-dimensional example in this work, we define the power-law kernel
\begin{equation}
    K_H(t,s):=(t-s)^{H-\half}\one_{t>s},\quad \text{for all}\quad H\in(0,1).
\end{equation}
In particular, $\Kk_0$ is a linear space and includes the kernels~$\KH$ and~$\KH^2$ for all~$H\in(0,1)$. Each~$K\in\Kk_0$ has a resolvent (see for instance~\cite[Lemma 2.1]{zhang2010stochastic}), that is a map~$R:\Tt^2\to\RR^{d\times d}$ such that
\begin{align}\label{eq:resolvent_pty}
        R(t,s)-K(t,s)=\int_s^t K(t,u)R(u,s)\du,\quad\text{for all } s<t, 
    \end{align}
and $\sup_{t\in\Tt}\big\lvert\int_\sft^t R(t,s)\ds\big\lvert<\infty$. For a constant~$C>0$, the resolvent~$R_{C,H}$ of~$K_H^2$ is given by 
\begin{equation}\label{eq:Res_powerlaw}
    R_{C,H}(t,s)=C\Gamma(2H)(t-s)^{2H-1} E_{2H,2H}\left(-C\Gamma(2H)(t-s)^{2H}\right)
\end{equation}
where~$\Gamma(z)$ is the Gamma function and $E_{\alpha,\beta}(z)= \sum_{n=0}^\infty \frac{z^n}{\Gamma(\alpha n+\beta)}$ denotes the Mittag--Leffler function. 
We take this opportunity to recall the generalised Grönwall lemma~\cite[Theorem 9.8.2]{Gripenberg90} suited to our purpose, which is the cornerstone of the well-posedness theory for~\eqref{eq:main_SVE} and of the further estimates in this section.
\begin{lemma}\label{lemma:Grownall}
    Let~$K\in\Kk_0$ and~$f,g:\RR_+\to\RR_+$ be two measurable functions such that~$t\mapsto\int_\sft^t K(t,s)f(s)\ds\in L^\infty(\Tt)$. Assume that for all~$t\in\Tt$, $\displaystyle
        f(t) \le g(t) + \int_\sft^t K(t,s)f(s)\ds,$ then:
    \begin{enumerate}\setlength{\itemsep}{-4pt}
        \item[1)] For all~$t\in\Tt$,
    \begin{align}\label{eq:Gronwall}
        f(t) \le g(t) + \int_\sft^t R(t,s)g(s)\ds.
    \end{align}
        \item[2)] If $K$ is positive then for all~$t\in\Tt$,
    \begin{align}\label{eq:RleK}
        \int_\sft^t R(t,s) g(s) \ds 
    \lesssim \sup_{t\in\Tt} \int_\sft^t K(t,s) g(s)\ds.
    \end{align}
        \item[3)] If $g(t)=\int_\sft^t K(t,s) g^*(s)\ds$ where~$g^*:\RR_+\to\RR_+$ is measurable, then for all~$t\in\Tt$,
        \begin{align*}
            f(t) \le \int_\sft^t R(t,s)g^*(s)\ds.
        \end{align*}
    \end{enumerate}
\end{lemma}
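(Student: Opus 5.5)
The plan is to treat the three items in order. Item 1) follows the classical resolvent argument, item 2) is a comparison between the resolvent and the kernel for positive kernels, and item 3) is a direct computation combining the resolvent identity~\eqref{eq:resolvent_pty} with 1).

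For 1), write~$f = g + \int_\sft^\cdot K(\cdot,s)f(s)\ds - r$, where the defect~$r\ge0$ is measurable. By hypothesis~$\int_\sft^\cdot K f$ is bounded, so the linear Volterra equation~$f = \tilde g + K\star f$ with~$\tilde g:=g-r$ has the unique bounded solution~$f=\tilde g + R\star\tilde g$. This is the variation-of-constants formula: apply~$R\star$ to the equation and use~\eqref{eq:resolvent_pty} together with Fubini. It then remains to see that~$R\star r\ge0$, or more directly that the map~$u\mapsto g+K\star u$ preserves order, and to iterate it. The Picard iterates~$u_{n+1}=g+K\star u_n$ started from~$u_0=f$ satisfy~$f\le u_n$. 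Since~$K\in\Kk_0$, the~$n$-fold convolution~$K^{\star n}$ has norm~$\to0$ on~$L^\infty$ over short intervals; patching these intervals gives a Neumann series, so~$u_n\to g+R\star g$. This step needs positivity of~$K$, or at least of the partial sums~$\sum K^{\star n}$. If~$K$ is signed, I would instead invoke \cite[Theorem 9.8.2]{Gripenberg90} directly, since the statement is precisely that result, whose standing hypothesis is that~$K$ is of nonnegative type. The convergence of the Neumann series in the~$\Kk_0$ class is the main technical obstacle. I would handle it by splitting~$\Tt$ into subintervals of length~$\ep$, on which~$\sup_t\int_{t-\ep}^t\abs{K(t,s)}\ds<1/2$ by the~$\limsup$ condition defining~$\Kk_0$.

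For 2), when~$K\ge0$ the resolvent is~$R=\sum_{n\ge1}K^{\star n}\ge0$, and from~\eqref{eq:resolvent_pty} we get~$R\star g = K\star g + K\star(R\star g)$. Set~$\psi:=R\star g$ and~$M:=\sup_t K\star g(t)$. Then~$\psi\le M + K\star\psi$, and part 1) yields~$\psi\le M(1+\sup_t\int_\sft^t R(t,s)\ds)\lesssim M$, because~$\int R$ is bounded uniformly in~$t$.

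For 3), substitute~$g=K\star g^*$ into~\eqref{eq:Gronwall} and apply Fubini, which is legitimate since all functions involved are nonnegative (or absolutely integrable). This gives~$g+R\star g = (K + R\star K)\star g^* $. Finally, the symmetric form of the resolvent identity,~$R-K = R\star K$, which holds for resolvents of Volterra kernels, shows that~$K + R\star K = R$. Hence~$f\le R\star g^*$.
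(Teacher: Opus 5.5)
Your proof of 3) is the paper's: substitute $g=K\star g^*$ into \eqref{eq:Gronwall}, apply Fubini, and use $R-K=R\star K$. The paper uses that identity implicitly; you state it. For 1) the paper only cites \cite[Theorem 9.8.2]{Gripenberg90}. Your monotone Picard iteration for $K\ge0$ is a valid self-contained alternative, since the hypothesis $K\star f\in L^\infty(\Tt)$ is exactly what makes $K^{\star n}\star f\to0$. It still rests on the Neumann-series convergence in $\Kk_0$, which you only sketch.

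Your caveat about signs is well placed, and the condition cannot be avoided. Take $K\equiv-c$, $g\equiv1$, and $f=(g-r)+R\star(g-r)$ with $r=a\one_{[\sft,\sft+\ep]}$ and $0<a\le1$. Then $f\ge0$ and $f\le g+K\star f$, yet for $t>\sft+\ep$ one has $f(t)=(1+a(e^{c\ep}-1))e^{-c(t-\sft)}>e^{-c(t-\sft)}=g(t)+R\star g(t)$. So no citation can cover arbitrary signed $K\in\Kk_0$; a sign condition such as $R\ge0$ is needed. The paper only ever applies the lemma with $K=C\KH^2\ge0$.

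For 2) you take a different route from the paper, and it has a gap. You use $R=K+K\star R$ to get $\psi=K\star g+K\star\psi\le M+K\star\psi$, then invoke 1). But 1) requires $K\star\psi\in L^\infty(\Tt)$. Since $K\star\psi=\psi-K\star g$, this amounts to boundedness of $\psi=R\star g$, which is what you are trying to prove. The hypothesis is not cosmetic: with $K\equiv1$, the function $f(t)=(t-\sft)^{-1}$ satisfies $f\le 0+K\star f$ but not $f\le 0$.

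The fix is truncation. Set $g_N:=g\wedge N$. Then $\psi_N:=R\star g_N\le N\sup_t\int_\sft^t R(t,u)\du$ is bounded and $K\star g_N\le M$, so 1) gives $\psi_N\le M\big(1+\sup_t\int_\sft^t R(t,u)\du\big)$. Monotone convergence, using $R\ge0$, then lets $N\to\infty$.

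Simpler still is the paper's argument, which uses the other form of the identity, the one you already invoke in 3): $R\star g=K\star g+R\star(K\star g)\le\big(1+\sup_t\int_\sft^t R(t,u)\du\big)M$. This needs only $R\ge0$ and no Gronwall step.
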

\begin{proof}
\begin{enumerate}
    \item[1)]  The first statement can be found in~\cite[Theorem 9.8.2]{Gripenberg90}.
    \item[2)]  For the second, note from the construction of the kernel~\cite[Equation (2.1)]{zhang2010stochastic} that~$K$ being positive implies that~$R$ is also positive. Thus, using \eqref{eq:resolvent_pty} and Fubini theorem we get
\begin{align*}
    \int_\sft^t R(t,s) g(s) \ds 
    &= \int_\sft^t K(t,s) g(s)\ds + \int_\sft^t \left(\int_s^t R(t,u)K(u,s)\du \right)g(s)\ds\\
    &=\int_\sft^t K(t,s) g(s)\ds + \int_\sft^t R(t,u) \left(\int_\sft^u K(u,s) g(s) \ds\right)\du \\
    &\le \left(1+\int_\sft^t R(t,u)\du\right) \sup_{t\in\Tt} \int_\sft^t K(t,s) g(s)\ds.
\end{align*}
    \item[3)] Lastly, we simply notice by Fubini's theorem that
\begin{align*}
    \int_\sft^t R(t,s)g(s)\ds 
    = \int_\sft^t \int_u^t R(t,s)K(s,u)\ds g^*(u) \du
    = \int_\sft^t (R-K)(t,u) g^*(u) \du.
\end{align*}
    Applying \eqref{eq:Gronwall}  then yields the claim. \qedhere
\end{enumerate}   
\end{proof}
The following assumptions are rather standard and will be in place throughout the paper.
\begin{assumptionp}
{\imperialBrick{\emph{Kernels}}}\label{as:kernel}  There exists~$H\in(0,1)$ such that for all~$\sft\le s<t\le t'\le T$
        \begin{align*}
            &\abs{K_b(t,s)}+\abs{K_\sigma(t,s)}^2 \lesssim \KH(t,s)^2; \\ 
            &\abs{K_b(t,s)-K_b(t',s)}+\abs{K_\sigma(t,s)-K_\sigma(t',s)}^2 \lesssim \abs{K_H(t,s)-K_H(t',s)}^2.
        \end{align*}
\end{assumptionp}
\begin{assumptionp}{\imperialBrick{\emph{Coefficients}}}\label{as:coefs}\
    \begin{enumerate}
        \item[(i)] For all $j\in\{1,\cdots,m\},$ $b,\sigma^{(j)}\in \Cc^{0,1}_b(\Tt\times\RR^d;\RR^d)$.
        \item[(ii)] For all~$(t,x)\in\Tt\times\RR^d$, the diffusion matrix~$\sigma(t,x)\in\RR^{d\times m}$ has a right inverse~$\xi(t,x):=\sigma(t,x)^{-1}\in\RR^{m\times d}$  such that~$\norm{\xi}_\infty=\sup_{(t,x)\in\Tt\times\RR^d} \abs{\xi(t,x)}<\infty$.
    \end{enumerate}
\end{assumptionp}
This set of assumptions implies the existence and uniqueness of a strong solution to the SVE~\eqref{eq:main_SVE} with all moments bounded:~$\sup_{t\in\Tt} \EE\abs{\bX^{\bx}_t}^p\lesssim \sup_{t\in\Tt} \abs{\bx(t)}^p$ for all $p\ge1$, see for instance~\cite[Theorem 3.1]{zhang2010stochastic}. Furthermore, $\bX^{\bx}$ is continuous and belongs to~$L^p(\Omega;\Xx)$ for all~$p\ge1$; if~$\bx\in\Cc^\gamma$ for some~$\gamma\in(0,1)$ then there exists a H\"older continuous modification~$\bX\in L^p(\Omega;\Cc^\gamma)$ for all~$p>1$. Both these results unfold from \cite[Theorem 3.3]{zhang2010stochastic}. We note in particular that $K_H$ satisfies \textbf{(H4)} of that paper because
    \begin{align*}
        \int_0^t \abs{K_H(t,s)-K_H(t',s)}^2\ds 
        \lesssim \int_0^t \Big( (t-s)^{2H-1} - (t'-s)^{2H-1}\Big) \ds 
        \lesssim (t'-t)^{2H}.
    \end{align*}

\subsection{The variational processes}\label{sec:tangent}
This sections deals with variational derivatives of~$\bX^{\bx}$, namely its derivative with respect to initial datum~$\bx$ and its Malliavin derivative. We could not locate such results in the literature under our set of assumptions but, since they are not surprising, we postpone the proofs to Appendix~\ref{sec:appendix}.

For all~$h\in\Xx$ we define the~$\RR^d$-valued process $\bY^{h}$ as the unique solution to the following linear SVE for all~$t\in\Tt$: 
\begin{align}\label{eq:tangent}
\bY^{h}_t = h(t) +\int_{\sft}^t K_b(t,s) \nabla b(s,\bX_s^{\bx}) \bY^{h}_s\ds + \sum_{j=1}^m \int_{\sft}^t K_\sigma(t,s) \nabla \sigma^{(j)}(s,\bX_s^{\bx}) \bY^{h}_s \D W_s^{(j)}.
\end{align}
Note that~$\int_{\sft}^t \KH(t,s)^2  \abs{h(s)}^p \ds \le \norm{h}_{\infty,r}^p T^{2H}/(2H),$ 
and~$\nabla b,\nabla \sigma$ are bounded hence~\cite[Theorem 3.1]{zhang2010stochastic} guarantees that the linear SVE~\eqref{eq:tangent} has a unique solution with all moments bounded. Furthermore, \cite[Theorem 3.3]{zhang2010stochastic} entails that~$\bY^{h}\in L^p(\Omega;\Xx)$ and~$\bY^{h}-h$ has a H\"older continuous modification on~$\Tt$. 
We call~$\bY^{h}$ the tangent process and we will show that it corresponds to the Fréchet derivative of~$\bX^{\bx}$ with respect to~$\bx$. 
In order to justify this claim, let us introduce the~$\RR^d$-valued process 
    \begin{align*}
        \bZ^{h}_t := \bX^{\bx+h}_t -\bX^{\bx}_t-\bY^{h}_t, \quad t\in\Tt.
    \end{align*}
The proof of the following result can be found in Appendix~\ref{subsec:proof_PropFrechet}.
\begin{proposition}\label{prop:Frechet}
Let Assumptions \ref{as:kernel} and~\ref{as:coefs} hold.
    Let~$h\in\Xx$. 
    Then the limit holds
    \begin{align*}
        \lim_{\norm{h}_{\infty} \!\!\!\searrow0} \frac{\EE\big\Vert\bZ^{h}\big\Vert_{\infty}^p}{\norm{h}_{\infty}^p} = 0.
    \end{align*}
    In other words, if we set $
    \Xx\ni\bx\mapsto f(\bx):=\bX^{\bx}\in L^p(\Omega;\Xx)$ then 
    we have $\bY^{h}=Df(\bx)(h)$. 
\end{proposition}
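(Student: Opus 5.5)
We write the equation satisfied by $\bZ^{h}$ and close it with the generalised Grönwall Lemma~\ref{lemma:Grownall}. Subtracting the equations for $\bX^{\bx+h}$, $\bX^{\bx}$ and $\bY^h$, the initial curves cancel ($\bx+h-\bx-h=0$). Taylor's formula with integral remainder, $b(s,y)-b(s,x)=\int_0^1\nabla b(s,x+\theta(y-x))\D\theta\,(y-x)$, then gives
\begin{align*}
\bZ^h_t = \int_\sft^t K_b(t,s)\big[\nabla b(s,\bX^{\bx}_s)\bZ^h_s + r^b_s\big]\ds + \sum_{j=1}^m\int_\sft^t K_\sigma(t,s)\big[\nabla\sigma^{(j)}(s,\bX^{\bx}_s)\bZ^h_s + r^{\sigma,j}_s\big]\D W^{(j)}_s,
\end{align*}
with the remainder
\[
r^b_s=\int_0^1\big(\nabla b(s,\bX^{\bx}_s+\theta\Delta_s)-\nabla b(s,\bX^{\bx}_s)\big)\D\theta\,\Delta_s,\qquad \Delta:=\bX^{\bx+h}-\bX^{\bx},
\]
and similarly for $r^{\sigma,j}$.

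\textbf{Step 1: a priori bound $\EE\abs{\Delta_t}^p\lesssim\norm{h}_\infty^p$.} For $p\ge2$, use BDG and Hölder with Assumption~\ref{as:kernel} ($\abs{K_b}+\abs{K_\sigma}^2\lesssim K_H^2$) and the Lipschitz property of $b,\sigma$. This gives
\[
\EE\abs{\Delta_t}^p\lesssim \norm{h}_\infty^p+\int_\sft^t K_H(t,s)^2\,\EE\abs{\Delta_s}^p\ds.
\]
Lemma~\ref{lemma:Grownall} applied with $K_H^2\in\Kk_0$ then yields the bound.

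\textbf{Step 2: pointwise bound on $\bZ^h$.} The same BDG/Hölder estimates give
\[
\EE\abs{\bZ^h_t}^p\lesssim \int_\sft^t K_H(t,s)^2\,\EE\abs{\bZ^h_s}^p\ds+\sup_{s}\EE\big(\abs{r^b_s}^p+\abs{r^\sigma_s}^p\big),
\]
so by Grönwall $\sup_t\EE\abs{\bZ^h_t}^p\lesssim\sup_s\EE\abs{r_s}^p$. For the remainder, Cauchy--Schwarz gives
\[
\EE\abs{r_s}^p\le \big(\EE\abs{\Delta_s}^{2p}\big)^{1/2}\Big(\EE\Big(\int_0^1\abs{\nabla b(s,\bX^{\bx}_s+\theta\Delta_s)-\nabla b(s,\bX^{\bx}_s)}\D\theta\Big)^{2p}\Big)^{1/2}.
\]
The first factor is $\lesssim\norm{h}_\infty^p$ by Step~1. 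The second factor tends to $0$ uniformly in $s$ as $\norm{h}_\infty\to0$, because $\Delta_s\to0$ in probability uniformly in $s$ (Step~1), $\nabla b$ is bounded and continuous (Assumption~\ref{as:coefs}(i)), and dominated convergence applies. To get uniformity in $s$ I would use tightness of $\{\bX^{\bx}_s\}_s$ via the moment bounds: split on $\{\abs{\bX^{\bx}_s}\le R\}$, use uniform continuity of $\nabla b$ on compacts, and use Markov's inequality on the complement. If $\nabla b$ were Lipschitz this step would be immediate, with a rate $\norm{h}_\infty^{2p}$.

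\textbf{Step 3: upgrade to the supremum inside the expectation.} This is the main obstacle, since the statement requires $\EE\norm{\bZ^h}_\infty^p$ rather than $\sup_t\EE\abs{\bZ^h_t}^p$. I would apply the Kolmogorov/Garsia--Rodemich--Rumsey argument already used for existence in \cite[Theorem 3.3]{zhang2010stochastic}. Since $\bZ^h$ has zero initial curve, it is a sum of a Lebesgue and a stochastic Volterra integral. The second part of Assumption~\ref{as:kernel} together with $\int\abs{K_H(t,s)-K_H(t',s)}^2\ds\lesssim\abs{t'-t}^{2H}$ gives, for $p$ large,
\[
\EE\abs{\bZ^h_{t'}-\bZ^h_t}^p\lesssim\abs{t'-t}^{pH}\sup_s\EE\big(\abs{\bZ^h_s}^p+\abs{r_s}^p\big).
\]
Since $\bZ^h_\sft=0$, Kolmogorov's criterion then gives
\[
\EE\norm{\bZ^h}_\infty^p\lesssim\sup_s\EE\big(\abs{\bZ^h_s}^p+\abs{r_s}^p\big)=o(\norm{h}_\infty^p).
\]
This settles the claim for large $p$, and small $p$ follows by Jensen's inequality. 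The identification $\bY^h=Df(\bx)(h)$ follows because $h\mapsto\bY^h$ is linear (the equation for $\bY^h$ is linear) and bounded, $\EE\norm{\bY^h}_\infty^p\lesssim\norm{h}_\infty^p$, by the same Grönwall and Kolmogorov estimates.
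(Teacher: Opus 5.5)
Your proof is correct and has the same overall structure as the paper's (Lemma~\ref{lemma:cty_Z} and the Kolmogorov argument in Appendix~\ref{subsec:proof_PropFrechet}). Both proofs use three steps:
\begin{enumerate}
\item an a priori bound $\EE\abs{\bX^{\bx+h}_t-\bX^{\bx}_t}^p\lesssim\norm{h}_\infty^p$ via the Volterra--Grönwall lemma;
\item a linear Volterra equation for $\bZ^h$ driven by a Taylor remainder;
\item Kolmogorov's criterion, using $\bZ^h_\sft=0$, to move the supremum inside the expectation.
\end{enumerate}

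The one real difference is how the remainder is handled. The paper bounds $\abs{\nabla\chi(s,\bX^{\bx}_s)-\widetilde{\chi}^{\bx}_h(s)}\lesssim\abs{\Delta_h\bX^{\bx}_s}$ by Lipschitz continuity of $\nabla b$ and $\nabla\sigma^{(j)}$. This gives the quantitative rate $\EE\norm{\bZ^h}_\infty^p\lesssim\norm{h}_\infty^{2p}$. However, that Lipschitz property is not contained in Assumption~\ref{as:coefs}(i); the paper imposes it separately only in Proposition~\ref{prop:IBP_2}. Your localisation and tightness argument uses only boundedness and continuity of the gradients. It therefore proves the statement under the assumptions as written, at the cost of a non-quantitative $o(\norm{h}_\infty^p)$.

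One small caveat: uniformity in $s$ relies on joint continuity of $\nabla b$ on compacts, which is the natural reading of $\Cc^{0,1}_b$. Failing that, dominated convergence in $s$ against $K_H(t,\cdot)^2\in L^q$ for some $q>1$ also works.

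There are two secondary differences. You first close Grönwall on $\sup_t\EE\abs{\bZ^h_t}^p$ and then obtain the sharper increment exponent $pH$ rather than the paper's $H(p-2)$. You also check that $h\mapsto\bY^h$ is bounded into $L^p(\Omega;\Xx)$, which the paper leaves implicit in its ``in other words''.
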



The Malliavin derivative satisfies a similar equation, which proof is in Appendix~\ref{subsec:proof_DX}.
\begin{lemma}\label{lemma:MalliavinD}
    For all~$t\ge0$, $\bX_t^{\bx}\in\DD^{1,2}$ and, for all $r\in[\sft,t)$, $\Df_r \bX^{\bx}$ is an $\RR^{d\times m}$-valued process solution of the linear equation 
    \begin{align}\label{eq:Malliavin_SVE}
    \mathcal{Y}_t = K_\sigma(t,r)\sigma(r,\bX_r^{\bx}) + \int_r^t K_b(t,s) \nabla b(s,\bX_s^{\bx}) \mathcal{Y}_s \ds+ \sum_{j=1}^m\int_r^t K_\sigma(t,s) \nabla \sigma^{(j)}(s,\bX_s^{\bx}) \mathcal{Y}_s \D W_s^{(j)},
\end{align}
and $\Df_r \bX_t^{\bx}=0$ if $r\ge t$. Moreover, there is $C>0$ such that $\EE\abs{\Df_r \bX_t^{\bx}}^2\le C K_H(t,r)^2$.
\end{lemma}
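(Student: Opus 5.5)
We follow the standard route for Malliavin differentiability of solutions to SDEs (Nualart, Theorem 2.2.1), adapted to the Volterra setting. We run a Picard scheme
\begin{align*}
\bX^{n+1}_t=\bx(t)+\int_\sft^t K_b(t,s)b(s,\bX^n_s)\ds+\int_\sft^t K_\sigma(t,s)\sigma(s,\bX^n_s)\D W_s,\qquad \bX^0=\bx,
\end{align*}
and we show by induction that $\bX^n_t\in\DD^{1,2}$ for every $t$. For fixed $n$, the chain rule for Lipschitz coefficients (Nualart, Proposition 1.2.4) applies, together with the commutation of $\Df_r$ with Lebesgue and Itô integrals of adapted integrands (Nualart, Proposition 1.3.8). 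We may take $b,\sigma$ to be $\Cc^1$ in space, as Assumption~\ref{as:coefs} gives. These yield, for $r<t$,
\begin{align*}
\Df_r\bX^{n+1}_t = K_\sigma(t,r)\sigma(r,\bX^n_r)+\int_r^t K_b(t,s)\nabla b(s,\bX^n_s)\Df_r\bX^n_s\ds+\sum_{j}\int_r^t K_\sigma(t,s)\nabla\sigma^{(j)}(s,\bX^n_s)\Df_r\bX^n_s\D W^{(j)}_s ,
\end{align*}
and $\Df_r\bX^{n+1}_t=0$ for $r\ge t$ by adaptedness.

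The key step is a uniform bound. Set $f_n(t,r):=\sup_{k\le n}\EE\abs{\Df_r\bX^k_t}^2$. We use the Itô isometry, Cauchy--Schwarz on the drift term, the boundedness of $\sigma,\nabla b,\nabla\sigma$, and Assumption~\ref{as:kernel}. For fixed $r$ this gives
\begin{align*}
f_{n+1}(t,r)\lesssim K_H(t,r)^2+\int_r^t K_H(t,s)^2 f_n(s,r)\ds .
\end{align*}
Here $\abs{K_b}\lesssim K_H^2$ and $\int_r^tK_H(t,s)^2\ds<\infty$ control the drift after Cauchy--Schwarz. Since $K_H^2\in\Kk_0$, we apply Lemma~\ref{lemma:Grownall} to $f=\sup_n f_n(\cdot,r)$, using its part 3) with $g=K_H(\cdot,r)^2$. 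This requires writing $K_H(t,r)^2$ in the resolvent form, or directly bounding $\int_r^t R(t,s)K_H(s,r)^2\ds\lesssim K_H(t,r)^2$ via part 2) and the beta-integral $\int_r^t(t-s)^{2H-1}(s-r)^{2H-1}\ds\lesssim (t-r)^{4H-1}\cdot\ldots$. I expect this to be the main obstacle: getting exactly $K_H(t,r)^2$ uniformly in $n$, rather than only an integrated bound, needs the explicit Mittag-Leffler resolvent \eqref{eq:Res_powerlaw} and a convolution estimate of the form $(R_{C,H}\ast K_H^2(\cdot-r))(t)\lesssim (t-r)^{2H-1}$, which holds since $(t-r)^{4H-1}\lesssim(t-r)^{2H-1}$ on bounded intervals.

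Given $\sup_n\EE\int_\sft^T\abs{\Df_r\bX^n_t}^2\dr<\infty$ and $\bX^n_t\to\bX^{\bx}_t$ in $L^2(\Omega)$, which is standard from the well-posedness theory of \cite{zhang2010stochastic}, closability of $\Df$ (Nualart, Lemma 1.2.3) gives $\bX^{\bx}_t\in\DD^{1,2}$. It also gives $\Df\bX^n_t\rightharpoonup\Df\bX^{\bx}_t$ weakly in $L^2(\Omega\times\Tt)$, and the bound $\EE\abs{\Df_r\bX^{\bx}_t}^2\le CK_H(t,r)^2$ passes to the limit by lower semicontinuity. To identify the limit as the solution of \eqref{eq:Malliavin_SVE}, we first note that this linear SVE, parametrised by $r$, is well posed by \cite[Theorem 3.1]{zhang2010stochastic}, as for \eqref{eq:tangent}. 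We then apply the chain rule directly to the fixed-point equation \eqref{eq:main_SVE} satisfied by $\bX^{\bx}$, now that $\bX^{\bx}_s\in\DD^{1,2}$, using Nualart, Proposition 1.3.8 again. This shows that $\Df_r\bX^{\bx}$ solves \eqref{eq:Malliavin_SVE}, and uniqueness concludes. If $b,\sigma$ are only Lipschitz, one replaces $\nabla b(s,\bX_s)$ by bounded adapted random matrices; under $\Cc^{0,1}_b$ the gradients exist, so we keep the stated form.
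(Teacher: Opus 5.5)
Your proposal is correct and follows essentially the same route as the paper: Picard iteration, induction on Malliavin differentiability with the recursive bound $\psi_{n+1}(r,t)\le c_1K_H(t,r)^2+c_2\int_r^tK_H(t,s)^2\psi_n(r,s)\,\ds$, the generalised Gr\"onwall lemma to get a bound uniform in $n$, closability, and differentiating the SVE to identify $\Df_r\bX^{\bx}$. Your final step, $\int_r^tR_{C,H}(t,s)K_H(s,r)^2\,\ds\lesssim (t-r)^{4H-1}\lesssim (t-r)^{2H-1}$, actually justifies the pointwise bound $\EE\abs{\Df_r\bX^{\bx}_t}^2\le CK_H(t,r)^2$ more cleanly than the paper does, since the paper appeals to this integral being uniformly bounded, which fails for $H<1/4$.
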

Finally, since the Malliavin derivative can be interpreted as a directional derivative of the Brownian motion along an $L^2(\Omega\times\Tt)$ path~$\eta$, the following lemma makes the link between the tangent process and Malliavin derivative in a neat fashion. A similar result can be traced back to~\cite{coutin2001stochastic}.
\begin{lemma}
    \label{lemma:equality_Y_DX}
    Let $h(t) = \int_{\sft}^t K_\sigma(t,s) h^*(s)\ds$ for some~$h^*\in L^2(\Tt;\RR^d)$.  
    For all~$t\in\Tt$, we have
    \begin{equation}
        \bY^{h}_t = \int_{\sft}^T \Df_s \bX_t^{\bx} \xi(s,\bX^{\bx}_s)h^*(s) \ds.
    \end{equation}
\end{lemma}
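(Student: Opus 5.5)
Define the candidate process $\widetilde{\bY}_t := \int_{\sft}^T \Df_s \bX_t^{\bx}\, \xi(s,\bX^{\bx}_s)h^*(s)\ds$. The plan is to show that $\widetilde{\bY}$ solves the linear SVE \eqref{eq:tangent} with forcing term $h$, and then to invoke uniqueness of that equation, which follows from \cite[Theorem 3.1]{zhang2010stochastic} as recalled above. Since $\Df_s\bX^{\bx}_t=0$ for $s\ge t$ by Lemma~\ref{lemma:MalliavinD}, the integral in fact runs over $[\sft,t]$.

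\textbf{Step 1: plug in the Malliavin equation.} For $s<t$, substitute \eqref{eq:Malliavin_SVE} for $\Df_s\bX^{\bx}_t$. The first term is $K_\sigma(t,s)\sigma(s,\bX_s^{\bx})$. Multiplying by $\xi(s,\bX^{\bx}_s)h^*(s)$ and using the right-inverse property $\sigma\xi=I_d$ from Assumption~\ref{as:coefs}(ii) leaves $K_\sigma(t,s)h^*(s)$. Integrating over $s$ gives exactly $h(t)$. This is where the choice $\eta_s=\xi(s,\bX^{\bx}_s)h^*(s)$ matters.

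\textbf{Step 2: interchange the integrals.} The drift term becomes
\[
\int_{\sft}^t\!\int_s^t K_b(t,u)\nabla b(u,\bX^{\bx}_u)\Df_s\bX^{\bx}_u\,\D u\,\xi h^*(s)\ds .
\]
By deterministic Fubini this equals $\int_{\sft}^t K_b(t,u)\nabla b(u,\bX^{\bx}_u)\widetilde{\bY}_u\,\D u$. The diffusion term requires exchanging $\ds$ with $\D W_u^{(j)}$, which is done with the stochastic Fubini theorem. The integrand $K_\sigma(t,u)\nabla\sigma^{(j)}(u,\bX_u^{\bx})\Df_s\bX^{\bx}_u\,\xi(s,\bX_s^{\bx})h^*(s)\one_{s<u}$ is not adapted in $u$ for $s$ fixed, because $\xi(s,\bX_s)$ is $\Ff_s$-measurable and $s<u$. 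This is harmless: on $\{s<u\}$ it is $\Ff_u$-measurable, so for each $u$ the whole integrand is adapted.

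\textbf{Step 3: the main obstacle.} The main obstacle is justifying the stochastic Fubini exchange. This needs square integrability, namely
\[
\int_{\sft}^t \EE\left(\int_{\sft}^u \abs{K_\sigma(t,u)}\,\abs{\Df_s\bX^{\bx}_u}\,\abs{h^*(s)}\ds\right)^2\D u<\infty .
\]
By Cauchy--Schwarz in $s$, the bound $\EE\abs{\Df_s\bX_u}^2\lesssim K_H(u,s)^2$ from Lemma~\ref{lemma:MalliavinD}, and boundedness of $\xi$ and $\nabla\sigma$, this is at most
\[
\norm{h^*}_{L^2}^2\int_{\sft}^t K_H(t,u)^2\int_{\sft}^u K_H(u,s)^2\ds\,\D u .
\]
This quantity is finite since $K_H^2$ is integrable. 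The same estimate shows that $\widetilde{\bY}$ has bounded second moments, which is what places it in the uniqueness class.

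\textbf{Step 4: conclude.} After the exchanges, $\widetilde{\bY}$ satisfies \eqref{eq:tangent} for every $t$. Uniqueness of the linear SVE then gives $\widetilde{\bY}=\bY^{h}$, up to modification.
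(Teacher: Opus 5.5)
Your proposal is correct and follows the paper's route: substitute \eqref{eq:Malliavin_SVE}, use $\sigma\xi=I_d$ to recover $h$, exchange the integrals by (stochastic) Fubini, and conclude by uniqueness for \eqref{eq:tangent}. The paper states the exchange for a general $\eta$ and then specialises to $\eta_s=\xi(s,\bX^{\bx}_s)h^*(s)$, without checking any integrability.

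One small fix in Step 3: the condition you verify only ensures that the swapped Itô integral is well defined. The stochastic Fubini theorem (e.g.\ in Da Prato--Zabczyk's form) instead requires $\int_{\sft}^t \big(\EE\int_s^t \abs{\Theta(s,u)}^2\,\D u\big)^{1/2}\ds<\infty$, where $\Theta(s,u)$ is your integrand. This follows from the same ingredients, since $\int_s^t \KH(t,u)^2\KH(u,s)^2\,\D u\lesssim (t-s)^{4H-1}$ and Cauchy--Schwarz in $s$ then gives a bound $\lesssim\norm{h^*}_{L^2}$. Also, your remark in Step 2 is self-contradictory: the integrand is genuinely adapted, since $\xi(s,\bX^{\bx}_s)$ is $\Ff_s\subset\Ff_u$-measurable for $s<u$.
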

\begin{proof}
    For any $\eta\in L^2(\Omega\times\Tt;\RR^m)$, applying Fubini and stochastic Fubini theorems to~\eqref{eq:Malliavin_SVE} entail that the $\RR^d$-valued process $(\langle \Df \bX_t^{\bx}, \eta \rangle_{L^2(\Tt;\RR^m)})_{t\in\Tt}$ satisfies
    \begin{align*}
        \langle \Df \bX_t^{\bx}, \eta \rangle_{L^2(\Tt;\RR^m)}
        &= \int_{\sft}^T K_\sigma(t,s)\sigma(s,\bX^{\bx}_s) \eta_s \ds +  \int_{\sft}^t K_b(t,s) \nabla b(\bX^{\bx}_s) \left(\int_{\sft}^s \Df_r \bX_s^{\bx} \eta_r \dr\right) \ds\\
        &\qquad+ \sum_{j=1}^m\int_{\sft}^t K_\sigma(t,s) \nabla^{(j)} \sigma(\bX^{\bx}_s) \left(\int_{\sft}^s \Df_r \bX_s^{\bx} \eta_r \dr\right) \D W_s^{(j)}\\          
        &= \int_{\sft}^t K_\sigma(t,s)\sigma(s,\bX^{\bx}_s) \eta_s \ds + \int_{\sft}^t K_b(t,s) \nabla b(\bX^{\bx}_s) \langle \Df \bX_s^{\bx}, \eta \rangle_{L^2(\Tt;\RR^m)} \ds\\
        &\qquad+ \sum_{j=1}^m\int_{\sft}^t K_\sigma(t,s) \nabla^{(j)} \sigma(\bX^{\bx}_s) \langle \Df \bX_s^{\bx}, \eta \rangle_{L^2(\Tt;\RR^m)} \D W_s^{(j)},
    \end{align*}
    where we used, crucially, that~$K_\sigma(t,s)=0$ for~$s\ge t$ and~$ \Df_r \bX_s^{\bx}=0$ for $r\ge s$. 
    Setting~$h(t)=\int_{\sft}^t K_\sigma(t,s) h^*(s) \ds$ for some~$h^*\in L^2(\Tt;\RR^d)$ and $\eta_s = \xi(s,\bX^{\bx}_s)h^*(s)$, $\bY^{h}$ solves the same equation~\eqref{eq:tangent}. By pathwise uniqueness, the proof is complete. 
\end{proof}

\subsection{Weighted spaces}\label{subsec:weighted_space}
The proof strategy of the IBP formula consists in deriving it first for~$\Cc^1_{b,\Xx,{\rm Lip}}(\Xx)$ test functions and then extending by density to a space of less regular functions. We define for that purpose our targetted space of test functions, which is a weighted space inspired by the theory of generalized Feller processes~\cite{cuchiero2026ramifications}. We fix $\gamma\in(0,H)$ and $p\ge1$ (recall that~$\bX^{\bx}-\bx$ has~$\gamma$--H\"older trajectories almost surely for all~$\gamma\in(0,H)$). Recall that $\Cc^\gamma_\infty$ is the space of $\gamma$-H\"older continuous functions equipped with the supremum norm~$\norm{\cdot}_{\infty}$. 
We then define the weight function~$\varrho:\Cc^\gamma_\infty\to\RR_+$ as
\begin{equation}\label{eq:def_varrho}
\varrho(\bx):=1+\norm{\bx}_\gamma^p.
\end{equation}
As explained in \cite[Example 2.3 (iv)]{cuchiero2026global}, this guarantees that for any $R>0$ the sets $B_R:=\{\bx\in\Cc^\gamma_\infty: \varrho(\bx) \le R\}$ are compact subsets of $\Cc^\gamma_\infty$. Further define the weighted norm
\begin{align*}
    \norm{\phi}_{\varrho}:=\sup_{\bx\in\Cc^\gamma_\infty} \frac{\abs{\phi(\bx)}}{\varrho(\bx)}, \quad \text{for any  } \phi:\Cc^\gamma_\infty\to\RR.
\end{align*}
The space~$\Bb^{\varrho} (\Cc^\gamma_\infty)$ is defined as the closure of~$ \Cc_b(\Cc^\gamma_\infty)$ with respect to~$\norm{\cdot}_{\varrho}$~\cite[Definition 2.3]{cuchiero2020generalized}, and is also a Banach space when equipped with this norm. 
A useful characterisation, proven in  \cite[Theorem 2.2]{cuchiero2026ramifications}, states that $\phi\in\Bb^{\varrho}(\Cc^\gamma_\infty)$ if and only if, for all~$R>0$, $\varphi\lvert_{B_R}\in  \Cc_b(B_R)$ and \begin{equation}\label{eq:VanishInfty}
    \lim_{R\nearrow+\infty}\sup_{\bx\in\Xx\setminus B_R}\frac{\abs{\phi(\bx)}}{\varrho(\bx)}=0.
\end{equation} 
In particular, $\Bb^{\varrho}(\Cc^\gamma_\infty)$ includes all continuous functions with the controlled growth~$\abs{\phi(\bx)}\le C(1+\norm{\bx}^p_\gamma)$ for some $C>0$. We thus have the following density lemma.
\begin{lemma}\label{lemma:density_phi}
Let $\phi\in\Bb^{\varrho}(\Cc^\gamma_\infty)$.
\begin{enumerate}
    \item[1)] For any $N>0$, $\bx\in\Cc^\gamma_\infty$ it holds
    \begin{align}\label{eq:est_phiX}
        \EE\abs{\phi(\bX^{\bx})}^2\lesssim N^2\norm{\phi}_{\varrho}^2 + \frac{1+\norm{\bx}_{\gamma}^{5p/2}}{\sqrt{N}}.
    \end{align}
    \item[2)] There exists $(\phi_n)\subset \Cc^1_{b,\Cc^\gamma,{\rm Lip}}(\Cc^\gamma_\infty)$ such that~$\lim_{n\nearrow+\infty}\norm{\phi-\phi_n}_{\varrho}=0$. In particular, \begin{align*}
        \lim_{n\nearrow+\infty}\sup_{\bx\in B_R}\EE\abs{\phi_n(\bX^{\bx})-\phi(\bX^{\bx})}^2=0.
    \end{align*}
\end{enumerate}
\end{lemma}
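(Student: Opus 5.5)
For part 1) I would split $\Omega$ according to whether $\bX^{\bx}$ lies in the compact set $B_N=\{\varrho\le N\}$. On $\{\varrho(\bX^{\bx})\le N\}$ the definition of the weighted norm gives $\abs{\phi(\bX^{\bx})}\le \norm{\phi}_\varrho\varrho(\bX^{\bx})\le N\norm{\phi}_\varrho$, which produces the first term $N^2\norm{\phi}_\varrho^2$. On the complement I would still use $\abs{\phi(\bX^{\bx})}\le\norm{\phi}_\varrho\varrho(\bX^{\bx})$ and then apply Cauchy--Schwarz followed by Markov:
\begin{align*}
\EE\big[\abs{\phi(\bX^{\bx})}^2\one_{\{\varrho(\bX^{\bx})>N\}}\big]
\le \norm{\phi}_\varrho^2\,\EE\big[\varrho(\bX^{\bx})^4\big]^{1/2}\,\PP\big(\varrho(\bX^{\bx})>N\big)^{1/2}
\le \norm{\phi}_\varrho^2\,\EE\big[\varrho(\bX^{\bx})^4\big]^{1/2}\,\frac{\EE[\varrho(\bX^{\bx})]^{1/2}}{\sqrt N}.
\end{align*}
The moment factors are controlled by the H\"older moment bound $\bX^{\bx}\in L^q(\Omega;\Cc^\gamma)$ with $\EE\norm{\bX^{\bx}}_\gamma^q\lesssim 1+\norm{\bx}_\gamma^q$. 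This bound comes from \cite[Theorem 3.3]{zhang2010stochastic} together with the decomposition $\bX^{\bx}=\bx+(\bX^{\bx}-\bx)$, where the second part has H\"older moments bounded linearly by $\sup\abs{\bx}$ because $b,\sigma$ are bounded. Collecting powers gives roughly $\norm{\bx}_\gamma^{2p}\cdot\norm{\bx}_\gamma^{p/2}=\norm{\bx}_\gamma^{5p/2}$. The constant $\norm{\phi}_\varrho^2$ is absorbed into $\lesssim$ here, or one can keep it as a factor. I would then verify the exponent bookkeeping so that it matches the stated $5p/2$. Choosing the moment order differently, for instance taking the fourth moment split as $\EE[\varrho^4]^{1/2}\EE[\varrho]^{1/2}$, is what yields exactly $2p+p/2$.

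For part 2) I would invoke the weighted Stone--Weierstrass theorem \cite[Proposition 2.6]{cuchiero2026ramifications}. It suffices to exhibit a point-separating subalgebra of $\Cc^1_{b,\Cc^\gamma,{\rm Lip}}(\Cc^\gamma_\infty)$ that contains constants and whose elements satisfy the growth condition (bounded functions do). A natural candidate is the algebra of cylindrical functions $\bx\mapsto g(\bx(t_1),\dots,\bx(t_k))$ with $g\in C^\infty_b$ having bounded derivatives. These are Fréchet differentiable along $\Cc^\gamma$, and in fact along $\Xx$, with bounded and Lipschitz derivatives. They form an algebra and separate points since point evaluations do. I expect the main obstacle to be checking the exact hypotheses of that proposition on the non-locally compact space $\Cc^\gamma_\infty$: one must confirm that the relevant topology is that of $\norm{\cdot}_\infty$ restricted to the compact sets $B_R$, and that point evaluations are continuous there, which they clearly are.

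The ``in particular'' statement then follows by applying part 1) to $\phi-\phi_n$:
\begin{align*}
\sup_{\bx\in B_R}\EE\abs{\phi_n(\bX^{\bx})-\phi(\bX^{\bx})}^2\lesssim N^2\norm{\phi-\phi_n}_\varrho^2+\frac{1+R^{5/2}}{\sqrt N},
\end{align*}
where I use $\norm{\bx}_\gamma^p\le R$ on $B_R$ and keep the factor $\norm{\phi-\phi_n}_\varrho^2$, which stays bounded, in the second term. First letting $n\to\infty$ and then $N\to\infty$ gives the limit zero.
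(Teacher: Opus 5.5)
Your proposal is correct and follows the paper's proof essentially step by step: the same split on $B_N$ with Cauchy--Schwarz and Markov yielding the $\norm{\bx}_\gamma^{5p/2}$ exponent, the same weighted Stone--Weierstrass theorem, and the same application of part 1) to $\phi-\phi_n$. The only differences are minor improvements. First, the paper separates points with $\bx\mapsto\int_I\bx(t)\dt$, which is unbounded and so not literally in $\Cc^1_{b,\Cc^\gamma,{\rm Lip}}$, whereas your bounded cylindrical functions avoid this issue. Second, keeping the factor $\norm{\phi-\phi_n}_\varrho^2$ explicit makes the implicit constant uniform in $n$, a point the paper glosses over.
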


\begin{proof}
1) For any~$\phi\in\Bb^{\varrho}(\Cc^\gamma_\infty)$ we have
\begin{align*}
    \sup_{\bx\in B_N} \abs{\phi(\bx)}
    = \sup_{\bx\in B_N} \abs{\phi(\bx)} \frac{{\varrho}(\bx)}{{\varrho}(\bx)}
    \le N\norm{\phi}_{{\varrho}}.
\end{align*} 
On the other hand, Cauchy-Schwarz and Markov inequalities with $\phi(\bx)\le\norm{\phi}_{\varrho}\varrho(\bx)$ yield
\begin{align*}
    \EE\left[\abs{\phi(\bX^{\bx})}^2\one_{\bX^{\bx}\notin B_N}\right]^2
    &\le \norm{\phi}^4_{\varrho}\EE\big[(1+\norm{\bX^{\bx}}^{p}_{\gamma})^4\big] \, \PP\big(1+\norm{\bX^{\bx}}_{\gamma}^p\ge N\big) \\
    &\lesssim  \Big(1+\EE\norm{\bX^{\bx}}^{4p}_{\gamma} \Big) \frac{1+\EE\norm{\bX^{\bx}}_{\gamma}^p}{N}
    \lesssim \frac{1+\norm{\bx}_{\gamma}^{5p/2}}{\sqrt{N}}.
\end{align*}
Writing $
\EE\abs{\phi(\bX^{\bx})}^2
=\EE\Big[\abs{\phi(\bX^{\bx})}^2(\one_{\bX^{\bx}\in B_N} + \one_{\bX^{\bx}\notin B_N})\Big]$ and applying the two estimates above yields the claim.

2) We first prove the existence of an approximating limit $\phi_n$. 
    We invoke the Stone-Weierstrass theorem \cite[Proposition 2.6]{cuchiero2026ramifications} that holds on non-locally compact spaces. 
Clearly $\Cc^1_{b,\Cc^\gamma,{\rm Lip}}(\Cc^\gamma_\infty)$ is a subalgebra of $\Bb^{\varrho}(\Cc^\gamma_\infty)$ with respect to pointwise multiplication and contains constant functions.  
Moreover it separates points: for any~$\bx\neq\bm{y}\in\Cc^\gamma_\infty$ there is an interval~$I\subset\Tt$ such that~$\max_{t\in\Tt}\abs{\bx(t)-\bm{y}(t)}=\max_{t\in I}\abs{\bx(t)-\bm{y}(t)}$, and either $\bx(t)>\bm{y}(t)$ for all $t\in I$ or $\bx(t)>\bm{y}(t)$ for all $t\in I$. Then the function~$f(\bx)=\int_I\bx(t)\dt$ separates points, is continuous and differentiable as~$Df(\bx)(h)=\int_I h(t)\dt$ for any~$h\in\Cc^\gamma$. Moreover, the derivatives are clearly Lipschitz continuous and bounded since $|Df(\bx)(h)|\lesssim \norm{h}_\infty\lesssim \norm{h}_{\gamma}$. The Stone-Weierstrass theorem implies that~$\Cc^1_{b,\Cc^\gamma,{\rm Lip}}(\Cc^\gamma_\infty)$ is dense in~$\Bb^{\varrho}(\Cc^\gamma_\infty)$ with respect to~$\norm{\cdot}_{\varrho}$.

Let us now apply~\eqref{eq:est_phiX} to $\phi-\phi_n$ and to an arbitrary radius $N>0$. Noting that $\bx\in B_{R}$ implies $1+\norm{\bx}_\gamma^{5p/2}\le 1+(R-1)^{5/2}\le R^{5/2}$, we get
\begin{align*}
    \sup_{\bx\in B_R}\EE\abs{\phi_n(\bX^{\bx})-\phi(\bX^{\bx})}^2
    \lesssim N^2 \norm{\phi-\phi_n}^2_{\varrho} + \frac{R^{5/2}}{\sqrt{N}}.
\end{align*}
Choosing $N$ large enough and then passing to the limit as $n$ goes to $+\infty$ yields the claim.
\end{proof}


\section{Bismut--Elworthy--Li formulae}\label{sec:IBP_1}

\subsection{The path-dependent Bismut--Elworthy--Li formula}
The specificity of the first Bismut--Elworthy--Li formula presented in this section is that the class of admissible direction is restricted to the Cameron--Martin space~$\Hh$ associated to the kernel~$K_\sigma$. Let us define~$K_\sigma^*f(t):=\int_\sft^tK_\sigma(t,s)f(s)\ds$ and
\begin{align*}
    \Hh := K_\sigma^*(L^2(\Tt))= \bigg\{ h\in\Xx: h(t) = \int_\sft^t K_\sigma(t,s) h^*(s) \ds \text{    for some    } h^*\in L^2(\Tt;\RR^m)\bigg\}.
\end{align*}
This is a Hilbert space when endowed with the inner product~$\langle h,g\rangle_\Hh = \langle h^*,g^*\rangle_{L^2(\Tt)}$. It also corresponds to the class of directions for which Lemma~\ref{lemma:equality_Y_DX} holds. Moreover this space is included in $\Cc^H(\Tt)$ since for all~$h\in\Hh$ it holds, by Assumption~\ref{as:kernel} and Cauchy--Schwarz inequality,
\begin{align}\label{eq:HhinCH}
    \abs{h(t)-h(s)}\le \norm{h^*}_{L^2(\Tt)} \Big(\abs{K_H(t,\cdot)-K_H(s,\cdot)}_{L^2([0,s])}+ \abs{K_H(t,\cdot)}^2_{L^2([s,t])}\Big)\lesssim \norm{h}_{\Hh} (t-s)^H.
\end{align}
In the special but central cases where~$K_\sigma\equiv K_H$ or $K_\sigma$ is the kernel of the Mandelbrot--Van Ness fractional Brownian motion, $\Hh$ is nothing else than~$\mathscr{I}_{\sft^+}^{H+1/2}(L^2(\Tt))$ (albeit with different but equivalent norms) which is dense in the space of continuous functions started at zero, see e.g. \cite{picard2010representation}.  Different characterisations of this space can be found in \cite[Section 13.2]{samko1993fractional}. To give an idea let us simply mention that $\Hh$ is included in the space of $H$-H\"older continuous functions starting at zero (at $\sft$). When $K_\sigma=K_H$ we obtain $h^* = \mathscr{D}^{H+1/2}_{\sft^+}(h) / \Gamma(H+1/2)$ (defined in~\eqref{eq:def_fracD_left} hence the power-law $h(t)=(t-\sft)^{H+\ep}$ is in $\Hh$ for all~$\ep>0$ because $h^*(t) =c (t-\sft)^{\ep-1/2}$.

We are now ready to state the main result of this section. Its proof unfolds from the identity $\bY^{h}_t = \langle \Df \bX^{\bx}_t, \eta \rangle_{L^2(\Tt;\RR^m)}$ at all~$t\in\Tt$, where $h\in\Hh$ and where we identify $\eta_s = \xi(s,\bX^{\bx}_s)h^*(s)$, as stated in Lemma~\ref{lemma:equality_Y_DX}.
\begin{theorem}\label{th:main_IBP}
    Let Assumptions~\ref{as:kernel} and \ref{as:coefs} hold, $\gamma\in(0,H)$ and $\phi\in \Bb^{\varrho}(\Cc^\gamma_\infty)$. Then the mapping $ \bx\mapsto\Phi(\bx):=\EE[\phi(\bX^{\bx})]$ belongs to~$\Dd^1_{\Hh}(\Cc^\gamma_\infty)$ and for all~$\bx\in\Cc^\gamma,\,h\in\Hh$, the Bismut--Elworthy--Li formula holds 
    \begin{equation}\label{eq:main_IBP} 
        D \Phi(\bx)(h)
        = \EE\left[\phi(\bX^{\bx})\int_\sft^T \Big\langle\xi(s,\bX_s^{\bx})h^*(s), \D W_s\Big\rangle\right].
    \end{equation}
\end{theorem}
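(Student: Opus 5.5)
The plan is to first establish \eqref{eq:main_IBP} for smooth test functions, then extend it to all of $\Bb^{\varrho}(\Cc^\gamma_\infty)$ by the density result of Lemma~\ref{lemma:density_phi}.

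\textbf{Step 1: smooth test functions.} Take $\phi\in\Cc^1_{b,\Cc^\gamma,{\rm Lip}}(\Cc^\gamma_\infty)$ and $h\in\Hh\subset\Cc^H\subset\Cc^\gamma$.

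\emph{Differentiating under the expectation.} Write
\[
\phi(\bX^{\bx+\ep h})-\phi(\bX^{\bx}) = D\phi(\bX^{\bx})(\bX^{\bx+\ep h}-\bX^{\bx}) + R_\ep,
\]
with $|R_\ep|\lesssim \norm{\bX^{\bx+\ep h}-\bX^{\bx}}^2_\gamma$. This uses the Lipschitz property of $D\phi$. Proposition~\ref{prop:Frechet} gives convergence of $\bZ^{h}$ only in the supremum norm. We therefore need the analogous statement in $L^p(\Omega;\Cc^\gamma)$: since $\bZ^h$ solves a Volterra equation with zero initial curve, the Kolmogorov-type argument of \cite[Theorem 3.3]{zhang2010stochastic} together with Lemma~\ref{lemma:Grownall} should upgrade the estimate to $\gamma$-H\"older norms for $\gamma<H$. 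This yields $D\Phi(\bx)(h)=\EE[D\phi(\bX^{\bx})(\bY^{h})]$.

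\emph{Passing to the Malliavin derivative.} By Lemma~\ref{lemma:equality_Y_DX}, $\bY^{h}_t=\langle \Df\bX^{\bx}_t,\eta\rangle$ for all $t$, with $\eta_s=\xi(s,\bX^{\bx}_s)h^*(s)$. A chain rule for the Malliavin derivative of path functionals gives $\phi(\bX^{\bx})\in\DD^{1,2}$ and
\[
D\phi(\bX^{\bx})(\bY^{h})=\langle \Df\phi(\bX^{\bx}),\eta\rangle_{L^2(\Tt;\RR^m)}.
\]
I would justify this by approximating $\phi$ through finite-dimensional cylinder functionals, or by invoking the Fréchet-differentiable chain rule in \cite{nualart2006malliavin}. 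The bound $\EE|\Df_r\bX_t|^2\lesssim K_H(t,r)^2$ from Lemma~\ref{lemma:MalliavinD} supplies the integrability needed for this step.

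\emph{Duality.} Since $\eta$ is adapted and bounded by $\norm{\xi}_\infty|h^*|\in L^2$, the Skorohod integral of $\eta$ is the It\^o integral $\int_\sft^T\langle\eta_s,\D W_s\rangle$. The duality relation then gives \eqref{eq:main_IBP} for smooth $\phi$.

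\textbf{Step 2: density.} For general $\phi$, pick $\phi_n\to\phi$ as in Lemma~\ref{lemma:density_phi}. Set $\pi:=\int\langle\eta,\D W\rangle$, which satisfies $\EE\pi^2\le\norm{\xi}_\infty^2\norm{h}^2_\Hh$ uniformly in $\bx$. Define $G(\bx)(h):=\EE[\phi(\bX^{\bx})\pi]$. By Cauchy--Schwarz and Lemma~\ref{lemma:density_phi}(2),
\[
D\Phi_n(\bx)(h)\to G(\bx)(h)
\]
uniformly over $\bx\in B_R$ and $\norm{h}_\Hh\le1$. We also need continuity of $\bx\mapsto G(\bx)(h)$, uniformly in $\norm{h}_\Hh\le1$. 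This should follow from the continuity of $\bx\mapsto\bX^{\bx}$ in $L^p(\Omega;\Cc^\gamma)$, the Lipschitz continuity of $\xi(s,\cdot)$, and uniform integrability supplied by \eqref{eq:est_phiX}. Note that the $\eta$ inside $\pi$ depends on $\bx$, so the continuity of $\xi$ in $x$ may be needed (it follows from $\sigma\in\Cc^{0,1}_b$ and the bounded inverse).

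\textbf{Step 3: conclusion.} Write
\[
\Phi_n(\bx+h)-\Phi_n(\bx)=\int_0^1 D\Phi_n(\bx+\theta h)(h)\,\D\theta,
\]
and let $n\to\infty$, using the uniform convergence on a compact $B_R$ containing the segment $\{\bx+\theta h\}$. This gives the same identity with $G$ in place of $D\Phi_n$. Continuity of $G$ then yields Fréchet differentiability along $\Hh$, with $D\Phi=G$.

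The main obstacle I expect is Step 1: obtaining the Fréchet expansion in the $\Cc^\gamma$ topology, since Proposition~\ref{prop:Frechet} controls only $\norm{\bZ^h}_\infty$, and justifying the functional Malliavin chain rule. The continuity of $G$ in Step 2 is secondary but requires some care.
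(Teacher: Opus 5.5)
Your proposal is correct and follows the paper's proof: the smooth case via the chain rule, the identity $\bY^h_t=\langle\Df\bX^{\bx}_t,\eta\rangle$ of Lemma~\ref{lemma:equality_Y_DX} and Malliavin duality, then extension through the density Lemma~\ref{lemma:density_phi}. The $\Cc^\gamma$-control you flag as the main obstacle is already available: Lemma~\ref{lemma:cty_Z} with Kolmogorov's criterion (as in the proof of Proposition~\ref{prop:Frechet}) controls $\bZ^h$, and Lemma~\ref{lemma:est_DeltahX} controls the increments $\bX^{\bx+h}-\bX^{\bx}$. The one real difference is how you close the density step, where the paper needs no continuity of $\bx\mapsto G(\bx)$, and hence none of $\xi$ in $x$, which the assumptions do not give for an arbitrary right inverse when $m>d$: applying the Taylor formula to $\Phi_n-\Phi_m$ and letting $m\to\infty$ gives $\abs{(\Phi-\Phi_n)(\bx+h)-(\Phi-\Phi_n)(\bx)}\le\ep\norm{h}_{\Hh}$ uniformly in $h$, and combining this with $D\Phi_n(\bx)(h)\to G(\bx)(h)$ at the single point $\bx$ and the Fr\'echet differentiability of each $\Phi_n$ concludes by a three-$\ep$ argument.
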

\begin{remark}\label{rem:mainIBP}
Under stronger assumptions we can break free from the H\"older regularity:
\begin{enumerate}
    \item[i)] The first part of the proof goes through with $\phi\in\Cc^1_{b,\Hh,{\rm Lip}}(\Xx)$, entailing that $\Phi\in\Dd^1_{\Hh}(\Xx)$ and that the integration by parts formula holds for any~$\bx\in\Xx$.
    \item[ii)] Under the condition that~$\phi(\bx)=\varphi(\bx(T))$ for all~$\varphi:\RR^d\to\RR$ with polynomial growth, it is also possible to prove by density that~$\Phi\in \Dd^1_{\Hh}(\Xx)$ and to derive the IBP for all~$\bx\in\Xx$. The reason is that, since $\RR^d$ is locally compact, it is no longer necessary to appeal to the H\"older norm in Lemma~\ref{lemma:density_phi}.
\end{enumerate}
Moreover, unlike classical Bismut--Elworthy--Li formulae, the tangent process $\bm{Y}^h$ does not show up in the final form. Hence the right side of \eqref{eq:main_IBP} is well-defined as long as the SVE has a unique solution~$\bX^{\bx}$ and we can hope to relax the conditions~$b,\sigma\in\Cc^{0,1}_b$. 
\end{remark}

\begin{proof}
\emph{\underline{Step 1. Smooth test functions.}}
Let us assume here that $\phi\in\Cc^1_{b,\Cc^\gamma,{\rm Lip}}(\Cc^\gamma_\infty)$.
The integration by parts follows from the following computation, where we differentiate under the expectation, we apply the chain rule from \cite[Proposition 3.8]{pronk2014tools}, Lemma~\ref{lemma:equality_Y_DX}, the linearity of the Fréchet derivative and the Malliavin integration by parts~\cite{nualart2006malliavin} to conclude 
\begin{equation} \label{eq:IBP_computation}
\begin{aligned}
    D \Phi(\bx)(h) 
    = \EE\Big[ D \phi(\bX^{\bx}) (\bY^{h}) \Big] 
    &= \EE\left[\int_\sft^T D \phi(\bX^{\bx})( \Df_r \bX^{\bx}) \xi(s,\bX^{\bx}_s) h^*(s) \ds \right]  \\
    &= \EE\left[\int_\sft^T \Big\langle\Df_s \phi(\bX^{\bx}),\xi(s,\bX_s^{\bx}) h^*(s) \Big\rangle \ds \right] \\
    &= \EE\left[\phi(\bX^{\bx}) \int_\sft^T \Big\langle\xi(s,\bX_s^{\bx}) h^*(s), \D W_s \Big\rangle \right]. 
\end{aligned}
\end{equation}
We justify the swapping of differentiation and expectation by the regularity of $\phi\in \Cc^1_{b,\Hh,{\rm Lip}}(\Cc^\gamma_\infty)$ and the chain rule for Fréchet derivatives. Indeed, they entail
\begin{align*}
    \EE\big[\abs{\phi(\bX^{\bx+h})-\phi(\bX) - D\phi(\bX^{\bx})(\bY^{h})}\big] 
    &\le \int_0^1 \EE\abs{D\phi(\bX^{\bx+\lambda h})(\bY^{h})-D\phi(\bX^{\bx})(\bY^{h})}\D \lambda\\
    &\lesssim \int_0^1 \lambda \norm{h}_{\gamma} \D \lambda \; \EE\norm{\bY^{h}}_{\gamma}
    \lesssim \norm{h}_{\Hh},
\end{align*}
where we used~\eqref{eq:HhinCH} to conclude.
This yields the claim for smooth enough~$\phi$, more precisely we showed that~$\Phi\in\Dd^1_{b,\Hh}(\Cc^\gamma_\infty)$.

\emph{\underline{Step 2. General test functions.}} Recall that under Assumption~\ref{as:kernel} we have  $\Hh\subset\Cc^H\subset\Cc^\gamma$, see~\eqref{eq:HhinCH}. Let us call $(\phi_n)$ the sequence of $\Cc^1_{b,\Hh,{\rm Lip}}(\Cc^\gamma_\infty)$ functions constructed in Lemma~\ref{lemma:density_phi}, and for all~$\bx\in\Cc^\gamma_\infty,\,h\in\Hh$, define
\begin{align*}
    \Phi_n(\bx)(h):=\EE[\phi_n(\bX^{\bx})],\qquad 
    \Psi_1(\bx;h):=\EE\left[\phi(\bX^{\bx}) \int_\sft^T \Big\langle \xi(s,\bX_s^{\bx})h^*(s) \D W_s \Big\rangle \right].
\end{align*}
For any $m,n\in\NN$, Taylor's formula, the IBP~\eqref{eq:IBP_computation} and Cauchy--Schwarz inequality yield
\begin{equation}\label{eq:est_PhinPhi_m}
\begin{aligned}
    \abs{(\Phi_n-\Phi_m)(\bx+h) - (\Phi_n-\Phi_m)(\bx)} 
    &=\abs{\int_0^1 D(\Phi_n-\Phi_m)(\bx+\lambda h)(h)\D \lambda}\\
    &=\abs{\int_0^1\EE\left[\big(\phi_n-\phi_m\big)(\bX^{\bx+\lambda h})\int_\sft^T \xi(s,\bX_s^{\bx+\lambda h})h^*(s)\D W_s\right]\D \lambda} \\
    &\le \sup_{\lambda\in[0,1]}\norm{\big(\phi_n-\phi_m\big)(\bX^{\bx+\lambda h})}_{L^2(\Omega)} \norm{\xi}_{\infty} \norm{h}_{\Hh},
\end{aligned}
\end{equation}
where we recall that $\norm{h^*}_{L^2(\Tt)}=\norm{h}_{\Hh}$. Applying the estimate~\eqref{eq:est_phiX} to $\phi_n-\phi_m$ yields for any $N>1$ 
\begin{align}
    \abs{(\Phi_n-\Phi_m)(\bx+h) - (\Phi_n-\Phi_m)(\bx)}
    \lesssim \left(N\norm{\phi_n-\phi_m}_{{\varrho}} + \frac{1+(\norm{\bx}_{\gamma}+\norm{h}_\gamma)^{5p/4}}{N^{1/4}}\right) \norm{h}_\Hh.
\end{align}
Recall that~$\norm{h}_\gamma\le C \norm{h}_{\Hh}$ by \eqref{eq:HhinCH}. 
Let us fix $\ep>0$, taking the limit as $m$ to $+\infty$ and $N$ large enough, there is $n_1\in\NN$ such that for all $n\ge n_1$, 
\begin{align}
   \abs{(\Phi_n-\Phi)(\bx+h) - (\Phi_n-\Phi)(\bx)}
    \le \ep \norm{h}_\Hh.
\end{align}
In a similar fashion, the IBP \eqref{eq:IBP_computation} and Cauchy--Schwarz inequality yield the existence of $n_2\in\NN$ such that for all $n\ge n_2$:
\begin{align}\label{eq:cvg_IBP}
    \big\lvert D\Phi_n(\bx)(h) - \Psi_1(\bx;h) \big\lvert^2
    &\le  \EE\abs{\phi_n(\bX^{\bx})-\phi(\bX^{\bx})}^2 \EE\left[\int_\sft^T \abs{\xi(r,\bX_r^{\bx})}^2 \abs{h^*(r)}^2\dr\right] \\ 
    &\le \EE\abs{\phi_n(\bX^{\bx})-\phi(\bX^{\bx})}^2 \norm{\xi}_\infty^2 \norm{h}_{\Hh}^2
    \le \ep \norm{h}_{\Hh}. \nonumber
\end{align}
Therefore, for $n\ge n_1 \vee n_2$ we obtain
\begin{equation}\label{eq:frechet_density}
\begin{aligned}
    \frac{\lvert\Phi(\bx+h)-\Phi(\bx)-\Psi_1(\bx;h)\lvert}{\norm{h}_{\Hh}}
    &\le  \frac{\lvert(\Phi-\Phi_n)(\bx+h)-(\Phi-\Phi_n)(\bx)\lvert}{\norm{h}_\Hh} \\
    &\qquad+  \frac{\lvert D\Phi_n(\bx)h-\Psi_1(\bx;h)\lvert}{\norm{h}_{\Hh}}\\ 
    &\qquad+  \frac{\lvert\Phi_n(\bx+h)-\Phi_n(\bx)-D\Phi_n(\bx)h\lvert}{\norm{h}_{\Hh}} \\
    &\le 2\ep +  \frac{\lvert\Phi_n(\bx+h)-\Phi_n(\bx)-D\Phi_n(\bx)h\lvert}{\norm{h}_{\Hh}} ,
\end{aligned}
\end{equation}
and since $\Phi_n\in\Dd^1_{\Hh}(\Cc^\gamma_\infty)$ for all $n\in\NN$, for $\norm{h}_{\Hh}$ small enough the above is lower or equal than $3\ep$.
 This proves that~$D\Phi(\bx)(h)=\Psi_1(\bx;h)$. Cauchy--Schwarz inequality yields
 \begin{align*}
     \abs{\Psi_1(\bx;h)} \lesssim \norm{\phi(\bX^{\bx})}_{L^2(\Omega)} \norm{h^*}_{L^2(\Tt)},
 \end{align*}
 and $\abs{\phi(\bX^{\bx})}\le C (1+\norm{\bX^{\bx}}^p_\gamma)$ because $\phi\in\Bb^{\varrho}(\Cc^\gamma_\infty)$, which yields the claim.
\end{proof}

\begin{remark}
    For any $\Ff_{\sft}$-measurable path~$\xi$, $\Phi(\xi)=\EE[\phi(\bX^\xi)\lvert\Ff_{\sft}]$ (see the proof of Lemma~\ref{lemma:DalphaM_is_Malpha} for a justification) hence the IBP~\eqref{eq:main_IBP} extends to conditional expectations as well. 
\end{remark}

\begin{remark}[Girsanov viewpoint]
    In retrospect, one could exploit Girsanov theorem to write a proof without Malliavin calculus. Let~$h\in\Hh$ and $W^{\sft}:=W-W_{\sft}$. First notice that there is a measurable~$F:\Xx\times\Cc(\Tt;\RR^m)\to\RR$ such that~$\phi(\bX^{\bx})=F(\bx,W^{\sft})$, and hence~$\phi(\bX^{\bx+h})=F(\bx,W^{\sft} + \int_{\sft}^\cdot \xi(s,\bX^{\bx}_s)h^*(s)\ds)$. We further obtain
    \begin{align*}
        \EE[\phi(\bX^{\bx+h})] 
        = \EE\left[F\left(\bx,W^{\sft} + \int_{\sft}^\cdot \xi(s,\bX^{\bx}_s)h^*(s)\ds\right)\right] = \EE\left[F(\bx,W^{\sft})\frac{\D \PP_h}{\D\PP}\right],
    \end{align*}
    where~$\PP_h$ is the probability measure under which~$W^{\sft} + \int_{\sft}^\cdot \xi(s,\bX^{\bx}_s)h^*(s)\ds$ is a Brownian motion. Differentiating~$\Phi(\bx)$ along~$h$ thus boils down to differentiating $\frac{\D \PP_h}{\D\PP}$ around $h=0$.
\end{remark}

The smoothing property of the expectation only applies in certain directions. 
In fact, the strong Feller property for Markovian lifts of Volterra processes was shown \emph{not} to hold in~\cite[Theorem 3.3]{hamaguchi2023markovian} and we do not expect such a strong property to hold for the Volterra processes either. 
\begin{corollary}\label{coro:reg_Phi}
Let the same Assumptions as Theorem \ref{th:main_IBP} hold.
    Then for all $\bx\in\Cc^\gamma$ and~$h\in\Hh$ we have
    \begin{align*}
        \abs{\Phi(\bx+h)-\Phi(\bx)}
        \lesssim \norm{h}_{\Hh}.
    \end{align*}
\end{corollary}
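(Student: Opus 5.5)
The plan is to integrate the Bismut--Elworthy--Li formula of Theorem~\ref{th:main_IBP} along the segment joining $\bx$ and $\bx+h$, and then bound the weight uniformly in the base point. Fix $\bx\in\Cc^\gamma$ and $h\in\Hh$. Since $\Hh\subset\Cc^H\subset\Cc^\gamma$ by \eqref{eq:HhinCH}, every point $\bx+\lambda h$, $\lambda\in[0,1]$, lies in $\Cc^\gamma$. Theorem~\ref{th:main_IBP} gives $\Phi\in\Dd^1_{\Hh}(\Cc^\gamma_\infty)$, so $\lambda\mapsto\Phi(\bx+\lambda h)$ is differentiable on $[0,1]$ with derivative $D\Phi(\bx+\lambda h)(h)$. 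If this derivative is bounded in $\lambda$, the mean value inequality yields
\begin{align*}
    \abs{\Phi(\bx+h)-\Phi(\bx)}\le \sup_{\lambda\in[0,1]}\abs{D\Phi(\bx+\lambda h)(h)}.
\end{align*}
This is exactly the structure of the first line of \eqref{eq:est_PhinPhi_m}, now applied to $\Phi$ itself instead of $\Phi_n-\Phi_m$.

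Next, I bound each derivative using \eqref{eq:main_IBP}, the Cauchy--Schwarz inequality, the Itô isometry and Assumption~\ref{as:coefs}(ii):
\begin{align*}
    \abs{D\Phi(\bx+\lambda h)(h)}\le \norm{\phi(\bX^{\bx+\lambda h})}_{L^2(\Omega)}\,\norm{\xi}_\infty\,\norm{h^*}_{L^2(\Tt)}
    = \norm{\phi(\bX^{\bx+\lambda h})}_{L^2(\Omega)}\,\norm{\xi}_\infty\,\norm{h}_{\Hh}.
\end{align*}
It remains to control $\sup_{\lambda}\norm{\phi(\bX^{\bx+\lambda h})}_{L^2(\Omega)}$. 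Since $\phi\in\Bb^{\varrho}(\Cc^\gamma_\infty)$, we have $\abs{\phi(\bm y)}\le\norm{\phi}_\varrho(1+\norm{\bm y}_\gamma^p)$. The $L^{2p}(\Omega;\Cc^\gamma)$ moment bounds for the solution then give
\begin{align*}
    \EE\abs{\phi(\bX^{\bx+\lambda h})}^2\lesssim 1+\norm{\bx}_\gamma^{2p}+\norm{h}_\gamma^{2p}.
\end{align*}
Alternatively, \eqref{eq:est_phiX} with a fixed $N$ gives the same conclusion.

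The main obstacle is this moment estimate, which depends on $\bx$ and $\norm{h}_\gamma$. The implicit constant in the corollary must therefore be understood as depending on $\bx$ and on an a priori bound for $h$. I would handle this in one of two ways. One option is to restrict to $\norm{h}_{\Hh}\le1$, in which case $\norm{h}_\gamma\lesssim1$ by \eqref{eq:HhinCH}. For $\norm{h}_{\Hh}>1$, the crude bound $\abs{\Phi(\bx+h)}+\abs{\Phi(\bx)}$ is not uniformly controlled, so the other option is to state the constant as $C(1+\norm{\bx}_\gamma+\norm{h}_{\Hh})^p$. This is local Lipschitz continuity along $\Hh$, which is what the corollary intends.
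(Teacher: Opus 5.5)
Your proposal is correct and follows the paper's proof: you bound $D\Phi(\bx+\lambda h)(h)$ by applying Cauchy--Schwarz to \eqref{eq:main_IBP} and integrate along the segment from $\bx$ to $\bx+h$; the paper uses the fundamental theorem of calculus where you use the mean value inequality, and the difference is immaterial. Your remark that the implicit constant depends on $\bx$ and on a bound for $\norm{h}_{\Hh}$, through $\sup_{\lambda}\norm{\phi(\bX^{\bx+\lambda h})}_{L^2(\Omega)}$, is accurate and makes explicit a dependence that the paper's proof leaves implicit.
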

\begin{proof}
    Applying Cauchy-Schwarz inequality to the formula~\eqref{eq:main_IBP}, we verify that for any~$\bx\in\Cc^\gamma$
    \begin{align*}
        \abs{D\Phi(\bx)(h)} \le \left(\EE\abs{\phi(\bX^{\bx})}^2\right)^\half  \norm{\xi}_{\infty} \norm{h^*}_{L^2(\Tt)}
        =\norm{\phi(\bX^{\bx})}_{L^2(\Omega)} \norm{\xi}_{\infty}\norm{h}_{\Hh}.
    \end{align*}
    The fundamental theorem of integration yields
    \begin{align*}
        \abs{\Phi(\bx+h)-\Phi(\bx)}
        =\Big\lvert \int_0^1 D\Phi(\bx+\lambda h)(h)\D \lambda \Big\lvert
        \le \int_0^1\norm{\phi(\bX^{\bx+\lambda h})}_{L^2(\Omega)} \D \lambda\norm{\xi}_{\infty} \norm{h}_{\Hh},
    \end{align*}
    thereby concluding the proof.
\end{proof}
\subsection{A state-dependent Bismut--Elworthy--Li formula with additive noise}
The proof of Theorem~\ref{th:main_IBP} hinges on the equality~$\bY^{h}_t = \int_{\sft}^T \Df_s \bX^{\bx}_t \xi(s,\bX_s^{\bx}) h^*(s) \ds$ being true for all~$t\in\Tt$. In the state-dependent case~$\phi(\bx)=\varphi(\bx(T))$, it is sufficient to find a similar relation only at~$t=T$. One can achieve this by assuming that the noise is additive, thereby greatly relaxing the conditions on the admissible directions. This technique is inspired by \cite[Theorem 5]{fan2015stochastic}.
\begin{proposition}\label{prop:IBP_additive}
    Let Assumptions~\ref{as:kernel} and \ref{as:coefs} hold, $\sigma\equiv\sigma_0\in\RR^{d\times m}$ be constant, $K_b=K_\sigma=:K$ and $\varphi:\RR^d\to\RR$ be such that~$\varphi(\bX^{\bx}_T)\in L^p(\Omega)$ for some $p>2$. Furthermore assume that there exists $a\in L^2(\Tt;\RR^{d\times d})$ such that~$ \int_{\sft}^T K(T,s)a_s \ds=I_d$. Then the mapping $ \bx\mapsto\Phi(\bx):=\EE[\varphi(\bX^{\bx}_T)]$ belongs to~$\Dd^1_{\Xx}(\Xx)$ and for all~$\bx, h\in\Xx$ the IBP formula holds
    \begin{equation}\label{eq:additive_IBP}      
        D\Phi(\bx)(h) 
        = \EE\left[\varphi(\bX^{\bx}_T)\int_\sft^T \big\langle \eta_s^{a,h}, \D W_s\big\rangle\right],
    \end{equation}
    where   
    \begin{align*}
        \eta^{a,h}_s:=\sigma_0^{-1}\left[\nabla b(\bX^{\bx}_s) \left(h(s)-\int_{\sft}^s K(s,r)a_r\dr h(T)\right) + a_s h(T) \right].
    \end{align*}
    Finally, let $h\in L^2(\Tt;\RR^d)$, let~$(h^\delta)_{\delta>0}\subset\Xx$ be an approximating sequence in $L^2(\Tt;\RR^d)$ and define $h(T):=\lim_{\delta\searrow0}h^\delta(T)$. Then we have
    \begin{align*}
        \lim_{\delta\searrow0} D\Phi(\bx)(h^\delta) = \EE\left[\varphi(\bX^{\bx}_T) \int_{\sft}^T \big\langle \eta_s^{a,h},\D W_s\big\rangle\right].
    \end{align*}
    This holds in particular for unbounded directions such as~$h(t)=K_\sigma(t,\sft)$ and~$h^\delta(t)=K_\sigma(t\vee(\sft+\delta),\sft)$ or $h^\delta(t)=K_\sigma(t+\delta,\sft)$.
\end{proposition}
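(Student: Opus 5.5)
The plan is to follow Fan's idea: find an adapted $\eta$ such that $\langle \Df \bX^{\bx}_T,\eta\rangle_{L^2(\Tt;\RR^m)}=\bY^h_T$, only at the terminal time. With additive noise, Lemma~\ref{lemma:MalliavinD} shows that $\mathcal{Y}^\eta_t:=\langle \Df\bX^{\bx}_t,\eta\rangle$ solves the linear Volterra equation
\begin{align*}
\mathcal{Y}^\eta_t=\int_\sft^t K(t,s)\big(\sigma_0\eta_s+\nabla b(\bX^{\bx}_s)\mathcal{Y}^\eta_s\big)\ds ,
\end{align*}
by the same Fubini argument as in Lemma~\ref{lemma:equality_Y_DX}. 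The tangent process satisfies $\bY^h_t=h(t)+\int_\sft^tK(t,s)\nabla b(\bX^{\bx}_s)\bY^h_s\ds$, which contains no stochastic integral.

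The key step is to guess the difference process. Set $g(t):=h(t)-\int_\sft^tK(t,r)a_r\dr\,h(T)$, so that $g(T)=0$ by the hypothesis on $a$. I claim that $\mathcal{Y}^\eta_t=\bY^h_t-g(t)$ with $\eta=\eta^{a,h}$. To check this, substitute $\sigma_0\eta_s=\nabla b(\bX^{\bx}_s)g(s)+a_sh(T)$. Then $\mathcal{Y}^\eta+g$ satisfies
\begin{align*}
\mathcal{Y}^\eta_t+g(t) &=h(t)-\int K a\,h(T)+\int_\sft^tK(t,s)\big[\nabla b\, g+a_sh(T)+\nabla b\,\mathcal{Y}^\eta_s\big]\ds\\
&=h(t)+\int_\sft^t K(t,s)\nabla b(\bX^{\bx}_s)\big(\mathcal{Y}^\eta_s+g(s)\big)\ds,
\end{align*}
which is exactly the equation for $\bY^h$. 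Uniqueness of linear SVEs then gives $\mathcal{Y}^\eta_T=\bY^h_T$. We also need $\eta^{a,h}\in L^2(\Omega\times\Tt)$. This holds because $\nabla b$ is bounded, $h$ is continuous, $\int K a$ is bounded via Cauchy--Schwarz since $\abs{K}^2\lesssim K_H^2$ is integrable, and $a\in L^2$. Finally, $\eta$ is adapted.

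Next, for $\varphi\in C^1_b$ with Lipschitz gradient, we differentiate under the expectation using Proposition~\ref{prop:Frechet}. Applying the chain rule and Malliavin duality as in \eqref{eq:IBP_computation} gives $D\Phi(\bx)(h)=\EE[\nabla\varphi(\bX^{\bx}_T)\bY^h_T]=\EE[\varphi(\bX^{\bx}_T)\int\langle\eta^{a,h},\D W\rangle]$.

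To reach general $\varphi$ with $\varphi(\bX^{\bx}_T)\in L^p$, I would approximate by mollified, truncated $\varphi_n$ and repeat Step 2 of Theorem~\ref{th:main_IBP}. The bound replacing the $\Hh$-norm is $\norm{\int\eta^{a,h}\D W}_{L^2}\lesssim\norm{h}_\infty$, uniformly in the perturbed initial curve. For convergence of $\varphi_n(\bX^{\bx+\lambda h}_T)$ in $L^2$ uniformly in $\lambda$, I would use H\"older with $p>2$. This is the main obstacle. In the spirit of Remark~\ref{rem:mainIBP}~ii), I would handle it by working on compacts of $\RR^d$, using uniform moment bounds to control the tails, and relying on local compactness for the density of $C^1$ functions. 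I expect the $p>2$ integrability to be exactly what makes these tail terms uniformly small.

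For the last claim, note that $\eta^{a,h}$ depends on $h$ only through $h$ in $L^2$ and through $h(T)$. Indeed, $\int_\sft^T\abs{\nabla b\,(h^\delta-h)}^2\ds\to0$ and $h^\delta(T)\to h(T)$. The It\^o isometry then gives $\int\eta^{a,h^\delta}\D W\to\int\eta^{a,h}\D W$ in $L^2(\Omega)$, and Cauchy--Schwarz yields the limit. For $h=K_\sigma(\cdot,\sft)$ with either approximation, the approximations $h^\delta$ are continuous and converge in $L^2$ because $K\in L^2$, and $h^\delta(T)\to K(T,\sft)$, which is finite for $T>\sft$.
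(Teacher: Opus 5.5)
Your proposal is correct and follows the paper's proof step by step. Checking that $\mathcal{Y}^{\eta}+g$ solves the tangent equation is the same as the paper's observation that $\Delta^{\eta}_t=\bY^h_t-\langle \Df\bX^{\bx}_t,\eta\rangle_{L^2(\Tt;\RR^m)}$ solves a linear Volterra equation whose unique solution for $\eta=\eta^{a,h}$ is $h(t)-\int_\sft^t K(t,s)a_s\ds\,h(T)$, which vanishes at $t=T$; the remaining steps are also the paper's (chain rule and Malliavin duality for smooth $\varphi$, localisation on compacts using $p>2$ together with $\EE\int_\sft^T\abs{\eta^{a,h}_s}^2\ds\lesssim\norm{h}_\infty^2$, and It\^o isometry with Cauchy--Schwarz for the limit along $h^\delta$).
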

\begin{remark}
 For any $u\in L^2(\Tt;\RR^{d\times d})$ such that $\int_{\sft}^T K(T,r)u_r\dr$ is invertible, the conditions on $a$ are met by choosing
    \begin{equation}\label{eq:choice_a_1d}
    a_s=u_s\left(\int_{\sft}^T K(T,r)u_r\dr\right)^{-1}.
    \end{equation}
\end{remark}
\begin{proof}
    \emph{\underline{Step 1.}} Let $\varphi\in\Cc^1_b(\RR^d)$ and $\bx,h\in\Xx$. Notice from \eqref{eq:tangent} that $\bY^h$ solves
    \begin{align*}
        \bY^h_t = h(t) + \int_{\sft}^t K(t,s) \nabla b(\bX^{\bx}_s)\bY^h_s \ds,
    \end{align*}
    while, for any $\eta\in L^2(\Omega\times\Tt;\RR^m)$ and according to~\eqref{eq:Malliavin_SVE}, the $\RR^d$-valued process $(\langle \Df \bX_t^{\bx}, \eta \rangle_{L^2(\Tt;\RR^m)})_{t\in\Tt}$ satisfies
    \begin{align*}
        \langle \Df \bX_t^{\bx}, \eta \rangle_{L^2(\Tt;\RR^m)} = \int_{\sft}^t K(t,s)\sigma_0 \eta_s \ds + \int_{\sft}^t K(t,s) \nabla b(\bX^{\bx}_s) \langle \Df \bX_s^{\bx}, \eta \rangle_{L^2(\Tt;\RR^m)} \ds.
    \end{align*}
    Therefore, $\Delta_t^\eta:= \bY^h_t - \langle \Df \bX_t^{\bx}, \eta \rangle_{L^2(\Tt;\RR^m)}$ is the unique solution to
    \begin{align}\label{eq:Delta}
        \Delta_t^\eta = h(t) -\int_{\sft}^t K(t,s)\sigma_0 \eta_s \ds + \int_{\sft}^t K(t,s) \nabla b(\bX^{\bx}_s)\Delta_s^\eta \ds.
    \end{align}
    Let us pick $a\in L^2(\Tt;\RR^{d\times d})$ such that~$\int_{\sft}^T K(T,s)a_s \ds=I_d$ and define
    \begin{align*}
        \eta^{a,h}_s:=\sigma_0^{-1}\left[\nabla b(\bX^{\bx}_s) \left(h(s)-\int_{\sft}^s K(s,r)a_r\dr h(T)\right) + a_s h(T) \right].
    \end{align*} 
    Plugging this into~\eqref{eq:Delta}, we find that~$\Delta_t^{\eta^{a,h}} = h(t) -\int_{\sft}^t K(t,s)a_s\ds h(T)$ is the unique solution to this equation, and thus~$\bY^h_T - \langle \Df \bX_T^{\bx}, \eta^{a,h} \rangle_{L^2(\Tt;\RR^m)}=\Delta^{\eta^{a,h}}_T=0$. 
    Similarly to \eqref{eq:IBP_computation} we get
    \begin{align*}
        D\Phi(\bx)(h) = \EE\left[\nabla\varphi(\bX^{\bx}_T) \bY^h_T\right]
        &= \EE\left[\nabla\varphi(\bX^{\bx}_T) \langle \Df \bX_T^{\bx}, \eta^{a,h} \rangle_{L^2(\Tt;\RR^m)} \right] \\
        &= \EE\left[ \langle \Df \varphi(\bX_T^{\bx}), \eta^{a,h} \rangle_{L^2(\Tt;\RR^m)} \right] \\
        &= \EE\left[ \varphi(\bX_T^{\bx}) \int_{\sft}^T \langle \eta^{a,h}_s, \D W_s\rangle\right].
    \end{align*}

    \emph{\underline{Step 2.}} The general case where $\varphi(\bX^{\bx}_T)\in L^p(\Omega)$ is obtained via a density argument, in a similar fashion as Step 3 of the proof of Theorem~\ref{th:main_IBP}. Let $(\varphi_n)_{n\in\NN}$ be a bounded approximating sequence of~$\varphi$, uniformly on compacts of~$\RR^d$. Then by a standard localisation argument, H\"older's and Markov's inequalities, we obtain for any~$R>0$
    \begin{align*}
        \EE[\abs{\varphi_m(\bX^{\bx}_T)-\varphi_n(\bX^{\bx}_T)}^2] 
        &\le \sup_{\abs{x}\le R} \abs{\varphi_m(x)-\varphi_n(x)}^2 + \norm{\varphi(\bX^{\bx}_T)}_{L^p(\Omega)}^{2/p}\PP(\abs{\bX^{\bx}_T}>R)^\frac{p-2}{p} \\
        &\lesssim \sup_{\abs{x}\le R} \abs{\varphi_m(x)-\varphi_n(x)}^2 +\left( \frac{1+\norm{\bx}_\infty}{R}\right)^\frac{p-2}{p}.
    \end{align*}
    It also holds, by Cauchy--Schwarz inequality,
    \begin{align}\label{eq:bound_etadW}
        \EE\left[\left(\int_{\sft}^T \langle\eta^{a,h}_s,\D W_s\rangle\right)^2\right] = \EE\int_{\sft}^T \abs{\eta^{a,h}_s}^2\ds 
        \le C \left(\int_{\sft}^T \abs{h(s)}^2 \ds+ \abs{h(T)}^2\right)\le C \norm{h}_{\infty}^2,
    \end{align}
    for $C>0$ independent of~$\bx$ and of $h$. 
    Therefore, for any $\ep>0$, there exist $R\in\NN$ and $n_1\in\NN$ large enough such that~$\abs{D\Phi_n(\bx)(h)-\EE\left[\varphi(\bX^{\bx}_T) \int_{\sft}^T \langle \eta^{a,h}_s,\D W_s\rangle\right]}\le \ep \norm{h}_\infty$ for all~$n\ge n_1$. Similarly to~\eqref{eq:est_PhinPhi_m} there is also~$n_2\in\NN$ such that~$\abs{(\Phi_n-\Phi_m)(\bx+h)-(\Phi_n-\Phi_m)(\bx)}\le \ep \norm{h}_\infty$. Finally, as in~\eqref{eq:frechet_density}, we conclude that~$D\Phi(\bx)(h)=\EE\left[\varphi(\bX^{\bx}_T) \int_{\sft}^T \langle \eta^{a,h}_s,\D W_s\rangle\right]$. This operator is bounded in virtue of~\eqref{eq:bound_etadW}.
    
    \emph{\underline{Step 3.}}
    Similarly to~\eqref{eq:bound_etadW}, two application of Cauchy--Schwarz inequality yield
    \begin{align*}
        \abs{\lim_{\delta\searrow0} D\Phi(\bx)(h^\delta) - \EE\left[\varphi(\bX^{\bx}_T) \int_{\sft}^T \langle \eta_s^{a,h},\D W_s\rangle\right] } \lesssim \norm{h-h^\delta}_{L^2(\Tt;\RR^d)}+\abs{h(T)^\delta-h(T)}.
    \end{align*}
    The proof is thus complete.
\end{proof}
Let $\Wh_t^{\bx} :=\bx(t)+ \int_{\sft}^t K_\sigma(t,s)\D W_s$ for all~$t\in\Tt$ (i.e. $\Wh^{\bx}=\bX^{\bx}$ with $b=0,\sigma=1$). Denoting~$\chi^2={\rm Var}(\Wh^{\bx}_T)=\int_{\sft}^T K_\sigma(T,s)^2\ds$, it is well-known by a standard integration by parts that for any~$h\in\Xx$ we have
\begin{align*}
    D\Phi(\bx)(h) = \frac{\EE\big[\varphi(\Wh^{\bx}_T)\Wh^{\bx}_T\big] h(T)}{\chi^2} = \EE\left[\varphi(\Wh^{\bx}_T) \int_{\sft}^T \frac{K_\sigma(T,s) h(T)}{\chi^2}\D W_s\right].
\end{align*}
We recovered such an IBP in Proposition \ref{prop:IBP_additive} by noticing that, in the state-dependent case, we only require the identity~$\bY^{h}_T = \langle \Df\bX^{\bx}_T,\eta\rangle_{L^2(\Tt)}$.


\subsection{Is there room for alternative formulae?}\label{subsec:optimality}
This section intends to explore whether other Malliavin weights exist and whether the condition $h\in\Hh$ is sharp. For simplicity we fix $d=1$ throughout but most arguments carry over to the multidimensional case. \smallskip

\noindent\textbf{The path-dependent case.}
The following (heuristic) argument consists in taking the proof backward to check if there is room for another sort of IBP formula. Let $(\Ff_t)_{t\in\Tt}$ be the filtration generated by $W$. Assume that, for~$\bx,h\in \Xx$ and $\phi\in\Cc^1_{b,\Xx}(\Xx)$, there exists an $\Ff_T$-measurable, centered, square integrable random variable~$\pi$ such that the integration by parts holds
    \begin{align}\label{eq:IBP_necessity}
        D\Phi(\bx)(h)
        = \EE\left[\phi(\bX^{\bx})\pi\right].
    \end{align}
By the martingale representation theorem \cite[Theorem IV-36.1]{Rogers00b}, there is an adapted process~$\eta\in L^2(\Omega\times\Tt)$ such that $\pi=\int_\sft^T  \eta_r \D W_r$. The same argument as in Lemma~\ref{lemma:equality_Y_DX} shows that the right hand side is equal to~$ \EE\left[D \phi(\bX^{\bx})\left(\bY^{\int_\sft^T K^r_\sigma \sigma(r,\bX^{\bx}_r) \eta_r\dr}\right)\right]$ whereas the left hand side, by the chain rule, is $\EE\left[D \phi(\bX^{\bx})\big(\bY^{h}\big)\right]$. This \textbf{does not necessarily imply} that $\bY^{h}=\bY^{\int_\sft^T K^r_\sigma \sigma(r,\bX^{\bx}_r) \eta_r\dr}$. Nevertheless we can conjecture that, if the IBP~\eqref{eq:IBP_necessity} holds for a large enough class of functions~$\phi$ and with the same weight~$\pi$, and if $\bX^{\bx},\bY^{h}$ and $W$ generate the same filtrations (which is the case if the noise is additive), then it must hold that~$\bY^{h-\int_\sft^T K^r_\sigma \sigma(r,\bX^{\bx}_r) \eta_r\dr}\equiv0$ almost surely.  
This would further entail that~$h=\int_\sft^T K^r_\sigma \sigma(r,\bX^{\bx}_r) \eta_r\dr$ and, assuming that~$K_\sigma^*$ is injective, that $h\in\Hh$ and $\eta_r=\xi(r,\bX^{\bx}_r)h^*(r)$.

\smallskip

The IBP formula \eqref{eq:main_IBP} is still valid after perturbing the stochastic integral $\pi_0=\int_\sft^T \big\langle\xi(r,\bX_r^{\bx})h^*(r), \D W_r\big\rangle$ with any random variable orthogonal to $\bX^{\bx}$. More precisely, $D\Phi(\bx)(h)=\EE[\phi(\bX^{\bx})\pi]$ holds for all~$\pi$ such that~$\EE[\pi\lvert\bX^{\bx}]=\EE[\pi_0\lvert\bX^{\bx}]$ (where $\EE[\cdot\lvert\bX^{\bx}]$ denotes conditional expectation with respect to the $\sigma$-algebra generated by~$\bX^{\bx}$). 
    This opens the gate to identifying the ``best" weight; see for instance~\cite{fournie2001applications} where it is shown that \textbf{the minimal variance weight} is $\EE[\pi_0\lvert\bX^{\bx}]$. In the case of additive noise~$\sigma\equiv\sigma_0$ and power-law kernel $K_\sigma=K_H$, we know that~$W$ generates the same filtration as~$\bX^{\bx}$. Therefore $\pi_0=\EE[\pi_0\lvert\bX^{\bx}]$ is in fact the optimal weight.

\medskip

\noindent{\textbf{The state-dependent case.}}
So far we restrained ourselves to adapted processes~$\eta$ but the theory does go further.
Indeed, the Malliavin integration by parts is not limited to the Itô integral. The divergence operator~$\bm{\delta}$ is the adjoint of the derivative~$\Df$. More precisely, the so-called Skorohod integral $\bm{\delta}(u)$ is the element of~$L^2(\Omega)$ characterised by~$\EE[F\bm{\delta}(u)]=\EE[\int_{\sft}^T \Df_sF u_s\ds]$, for any~$F\in\DD^{1,2}$ and~$u\in L^2(\Omega\times\Tt)$ wherever this is well-defined. We refer to \cite[Section 1.3]{nualart2006malliavin} for more details. 
For any $X_T,Y_T\in L^1(\Omega)$ and $\eta:\Tt\to L^2(\Omega)$ such that everything below is well-defined we have
\begin{align*}
    \EE[\varphi'(X_T)Y_T] 
    &= \EE\left[\int_{\sft}^T \varphi'(X_T)\frac{\eta_r \Df_r X_T Y_T}{\int_{\sft}^T \eta_s \Df_s X_T\ds} \dr\right] 
    = \EE\left[\varphi(X_T) \bm{\delta}\left(\frac{\eta_\cdot Y_T}{\int_{\sft}^T \eta_s \Df_s X_T\ds}\right)\right] \\
    &= \EE\left[\varphi(X_T) \frac{ Y_T}{\int_{\sft}^T \eta_s \Df_s X_T\ds}\bm{\delta}\left(\eta_\cdot\right)\right]
    + \EE\left[\varphi(X_T) \int_{\sft}^T \eta_r \Df_r \left\{\frac{ Y_T}{\int_{\sft}^T \eta_s \Df_s X_T\ds}\right\}\dr\right],
\end{align*}
where the last equality can be found in~\cite[Proposition 1.3.3]{nualart2006malliavin}. Setting~$X = \bX^{\bx}$ and~$Y=\bY^{h}$, for some~$h\in\Xx$, then the left side is nothing else than~$D\Phi(\bx)(h)$. On the right side, the second term vanishes if and only if~$\bY^{h}_T = \langle \Df \bX^{\bx}_T, \eta\rangle_{L^2(\Tt)}$.


\section{A family of fractional integration by parts} \label{sec:IBP_frac}

We would like to extend the class of directions~$\Hh$ for which we could show differentiability of~$\Phi$ in the previous section. Indeed, by Cauchy--Schwarz inequality it follows that any~$h\in\Hh$ must satisfy $h({\sft})=0$ and in particular constant directions (other than zero) are not included.  

This relative rigidity leads us to consider whether we could trade some regularity from~$\phi(\bX^{\bx})$ to the direction~$h$. It turns out that the adequate notion that gives rise to an interpolation between~$\phi$ and~$D\phi$ is the right Riemann--Liouville derivative, denoted~$\scrD^\alpha_{T^-}$ and defined in~\eqref{eq:def_fracD_right}. 
In analogy with~$\Hh$, we introduce a new class $\Hh_\alpha$ of admissible directions
\begin{align*}
    &\Hh_\alpha := \left\{h:\Tt\to\RR^d \,\lvert\, h(t)=\int_{\sft}^t K_\sigma(t,s) h^*(s) \ds, \, h^*\in L^2_\alpha\right\}\cap\, \Xx, \\
    & L^2_\alpha := \left\{h:\Tt\to\RR^d \,\lvert\, \int_{\sft}^T (s-\sft)^{2\alpha} \abs{h^*(s)}^2\ds <\infty\right\}.
\end{align*}
Let us define the associated norms~$\norm{h}_{\Hh_\alpha}^2:=\norm{h^*}_{L^2_\alpha}^2:=\int_{\sft}^T (s-\sft)^{2\alpha} \abs{h^*(s)}^2\ds $. 
Further define the martingale~$M$ as~$M_t:=\EE[\phi(\bX^{\bx})\lvert\Ff_t]$ for all~$t\in\Tt$ and in particular~$M_T=\phi(\bX^{\bx})$. We denote its quadratic variation by~$\langle M\rangle$.
\begin{theorem}\label{thm:frac_IBP}
    Let Assumption \ref{as:coefs} and \ref{as:kernel} hold. 
    \begin{enumerate}
        \item[A)] Let $\phi\in\Cc_{b,\Xx,{\rm Lip}}^1(\Xx)$ and suppose that there exists~$\alpha\in(0,1)$ such that 
    \begin{align}\label{eq:fractional_condition}
    \EE\left[\int_{\sft}^T (s-\sft)^{-2\alpha}\D \langle M\rangle_s\right] 
    <\infty.
    \end{align}
    Then $\scrD^{\alpha}_{T^-}\big(M_T-M_\cdot)(\sft)=\int_{\sft}^T (s-\sft)^{-\alpha}\D M_s$ belongs to~$L^2(\Omega)$, we have~$\Phi\in\Dd^1_{\Hh_\alpha}(\Xx)$ and for any $h\in\Hh_\alpha$ the fractional integration by parts holds
    \begin{align}\label{eq:frac_IBP_formula}
    D\Phi(\bx)(h) 
    &= \Gamma(1-\alpha) \EE\left[\scrD^{\alpha}_{T^-}\Big(M_T-M_\cdot\Big)(\sft) \int_\sft^T (s-\sft)^\alpha \Big\langle \xi(s,\bX_s^{\bx}) h^*(s),  \D W_s\Big\rangle \right].
\end{align}
    \item[B)] Suppose that~$\phi(\bx)=\varphi(\bx(T))$ for some $\varphi\in \Cc^\beta_{{\rm poly}}(\RR^d)$, $\beta\in(0,1)$,  and let $\alpha<\beta/2$.  
    Then $\scrD^{\alpha}_{T^-}\big(M)(\sft)$ exists and belongs to~$L^2(\Omega)$, we have~$\Phi\in\Dd^1_{\Hh_\alpha}(\Xx)$ and for any $h\in\Hh_\alpha$ the fractional integration by parts~\eqref{eq:frac_IBP_formula} holds. Moreover, we have the representation
\begin{align*}
    \scrD^\alpha_{T^-}(M_T-M_\cdot)(\sft) = \frac{1}{\Gamma(1-\alpha)}\left( \frac{M_T-M_{\sft}}{(T-\sft)^\alpha} + \alpha \int_{\sft}^T \frac{M_t-M_{\sft}}{(t-\sft)^{1+\alpha}}\dt\right).
\end{align*}
    \item[C)] Under the same assumptions as B), for any $\bx\in\Xx, h\in\Hh_\alpha$ we have
    \begin{align}
        \abs{\Phi(\bx+h)-\Phi(\bx)} \lesssim \norm{h}_{\Hh_\alpha}.
    \end{align}
    \end{enumerate}
\end{theorem}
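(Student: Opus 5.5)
The plan is to prove A) by a Clark--Ocone/Itô-isometry rewriting of the argument of Theorem~\ref{th:main_IBP}, then to obtain B) by checking condition~\eqref{eq:fractional_condition} through a temporal Hölder estimate on $M$ followed by a density argument, and finally to derive C) as in Corollary~\ref{coro:reg_Phi}.

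\textbf{Part A.} I first take $h\in\Hh$, i.e.\ $h^*\in L^2(\Tt)$. By the chain rule, Lemma~\ref{lemma:equality_Y_DX} and the computation~\eqref{eq:IBP_computation},
\[
D\Phi(\bx)(h)=\EE\Big[\int_\sft^T\langle \Df_s\phi(\bX^{\bx}),\eta_s\rangle\ds\Big],\qquad \eta_s=\xi(s,\bX^{\bx}_s)h^*(s).
\]
Since $\eta$ is adapted, I may replace $\Df_s\phi(\bX^{\bx})$ by $\theta_s:=\EE[\Df_s\phi(\bX^{\bx})\lvert\Ff_s]$. By Clark--Ocone, $\D M_s=\langle\theta_s,\D W_s\rangle$. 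Inserting $(s-\sft)^{-\alpha}(s-\sft)^{\alpha}$ and using the Itô isometry gives
\[
D\Phi(\bx)(h)=\EE\Big[\int_\sft^T(s-\sft)^{-\alpha}\D M_s\int_\sft^T(s-\sft)^{\alpha}\langle\eta_s,\D W_s\rangle\Big].
\]
Both factors lie in $L^2(\Omega)$: the first by~\eqref{eq:fractional_condition}, the second because $\norm{\xi}_\infty<\infty$ and $h^*\in L^2_\alpha$.

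Next I identify the first factor with a Riemann--Liouville derivative. By stochastic Fubini,
\[
\scrI^{1-\alpha}_{T^-}(M_T-M_\cdot)(t)=\frac{1}{\Gamma(1-\alpha)}\int_t^T\frac{(s-t)^{1-\alpha}}{1-\alpha}\D M_s .
\]
Differentiating in $t$, with the limit at $t=\sft$ taken in $L^2(\Omega)$ and controlled by~\eqref{eq:fractional_condition} via dominated convergence, gives $\Gamma(1-\alpha)\scrD^\alpha_{T^-}(M_T-M_\cdot)(\sft)=\int_\sft^T(s-\sft)^{-\alpha}\D M_s$.

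To pass to general $h\in\Hh_\alpha$, I truncate $h^*_n:=h^*\one_{[\sft+1/n,T]}\in L^2$. By Cauchy--Schwarz, $\norm{h-h_n}_\infty\le\norm{h^*\one_{[\sft,\sft+1/n]}}_{L^2_\alpha}\sup_t\big(\int_\sft^t \KH(t,s)^2(s-\sft)^{-2\alpha}\ds\big)^{1/2}\to0$ (here I need $\alpha<1/2$, which is the regime of interest). Since $\phi\in\Cc^1_{b,\Xx,{\rm Lip}}$, $D\Phi(\bx)$ is continuous along $\Xx$-directions, so $D\Phi(\bx)(h_n)\to D\Phi(\bx)(h)$. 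The right-hand side converges by the Itô isometry, which yields~\eqref{eq:frac_IBP_formula}. The same bound gives $\norm{h}_\infty\lesssim\norm{h}_{\Hh_\alpha}$, so $\Phi\in\Dd^1_{\Hh_\alpha}(\Xx)$.

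\textbf{Part B.} The heart of the matter is the temporal estimate
\[
\EE\abs{M_t-M_\sft}^2\lesssim (t-\sft)^{\beta},\qquad t\in\Tt .
\]
To prove it I use a coupling. Let $\bX'$ solve the same SVE driven by the Brownian motion whose increments on $[\sft,t]$ are replaced by an independent copy. Then $\EE[\varphi(\bX'_T)\lvert\Ff_t]=M_\sft$, and hence
\[
\EE\abs{M_t-M_\sft}^2\le\EE\abs{\varphi(\bX^{\bx}_T)-\varphi(\bX'_T)}^2\lesssim\EE\big[(1+\text{poly})\abs{\bX^{\bx}_T-\bX'_T}^{2\beta}\big].
\]
A Grönwall argument (Lemma~\ref{lemma:Grownall}) on the difference should give $\EE\abs{\bX^{\bx}_T-\bX'_T}^{2}\lesssim\int_\sft^t\KH(T,s)^2\ds\lesssim (t-\sft)$, the last step using that $\KH(T,\cdot)$ is bounded near $\sft$. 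Hölder's inequality then gives the rate $(t-\sft)^\beta$. This is the step I expect to be the main obstacle, in particular controlling the differences of the coupled processes at intermediate times, which enter through the multiplicative noise.

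Given this estimate, I integrate by parts $\int_{\sft+\ep}^T(s-\sft)^{-\alpha}\D M_s$. The result is the Marchaud-type expression of the statement, plus a boundary term $(M_{\sft+\ep}-M_\sft)\ep^{-\alpha}$, which is $O(\ep^{\beta/2-\alpha})$ in $L^2$. The integral $\int\norm{M_t-M_\sft}_{L^2}(t-\sft)^{-1-\alpha}\dt$ converges exactly when $\alpha<\beta/2$. This proves the representation and the $L^2$-existence.

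Finally I mollify $\varphi$ into $\varphi_n\in\Cc^1_b$ with uniformly bounded $\beta$-Hölder constants and polynomial growth, localised as in Step~2 of Proposition~\ref{prop:IBP_additive}. Part A applies to each $\varphi_n$. The bound above, being uniform in $n$, combined with the pointwise $L^2$ convergence of $M^n$, yields $L^2$ convergence of the fractional derivatives by dominated convergence. The Fréchet derivative is then identified exactly as in~\eqref{eq:frechet_density}.

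\textbf{Part C.} By Cauchy--Schwarz, $\abs{D\Phi(\bx+\lambda h)(h)}\lesssim\norm{\scrD^\alpha_{T^-}(\cdots)}_{L^2}\norm{\xi}_\infty\norm{h}_{\Hh_\alpha}$. The first factor is bounded uniformly in $\lambda\in[0,1]$ by the B) estimates, since the moment bounds depend on $\norm{\bx+\lambda h}_\infty$ only. I then integrate over $\lambda$ as in Corollary~\ref{coro:reg_Phi}.
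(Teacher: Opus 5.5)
\textbf{The gap: Part A's extension from $\Hh$ to $\Hh_\alpha$ fails for $\alpha>H$.} This is exactly the regime the theorem is designed for. Your uniform bound needs
\[
\sup_{t\in(\sft,T]}\int_\sft^t \KH(t,s)^2(s-\sft)^{-2\alpha}\ds={\rm B}(2H,1-2\alpha)\sup_{t\in(\sft,T]}(t-\sft)^{2(H-\alpha)}<\infty,
\]
which holds only for $\alpha\le H$, not for all $\alpha<1/2$. For $\alpha\in(H,1/2)$ it fails, and this is where constant directions become admissible (Remark~\ref{rem:Halpha}) and what Corollary~\ref{coro:fIBP_singular} assumes.

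No sharper estimate can save the argument. Every truncation $h_n\in\Hh$ vanishes at $\sft$, so $\norm{h-h_n}_\infty\ge\abs{h(\sft)}$. Hence no sequence in $\Hh$ converges uniformly to a constant direction. Likewise $\norm{h}_\infty\lesssim\norm{h}_{\Hh_\alpha}$ is false for $\alpha>H$. Take $K_\sigma=\KH$, $1/2+H-\alpha<\gamma<1/2$ and $h^*=(\cdot-\sft)^{\gamma-H-1}\one_{[\sft+a,T]}$. Then $\norm{h}_{\Hh_\alpha}$ stays bounded as $a\searrow0$, while $h(\sft+2a)\asymp a^{\gamma-1/2}\to\infty$.

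So neither~\eqref{eq:frac_IBP_formula} for $h\in\Hh_\alpha\setminus\Hh$ nor $\Phi\in\Dd^1_{\Hh_\alpha}(\Xx)$ follows from the $\Xx$-continuity of $D\Phi(\bx)$. Your Part B inherits the gap, because it invokes A for each $\varphi_n$. It needs A both for the formula and for $\Phi_n\in\Dd^1_{\Hh_\alpha}(\Xx)$ in the step modelled on~\eqref{eq:frechet_density}.

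\textbf{For general path-dependent $\phi$ the gap cannot be closed.} If constant directions are admitted, $\phi(\bx)=\psi(\bx(\sft))$ satisfies every hypothesis of A with $M$ constant. The right side of~\eqref{eq:frac_IBP_formula} then vanishes, whereas $D\Phi(\bx)(h)=\nabla\psi(\bx(\sft))h(\sft)$. The reason is that, for $h^*\in L^2_\alpha$, the identity $\bY^h_t=\int_\sft^T\Df_s\bX^{\bx}_t\,\xi(s,\bX^{\bx}_s)h^*(s)\ds$ holds only for $t>\sft$.

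\textbf{What does work} is what the paper does when it resumes~\eqref{eq:IBP_computation} directly for $h\in\Hh_\alpha$, with no approximation. Since $\int_\sft^t\KH(t,s)\abs{h^*(s)}\ds\lesssim(t-\sft)^{H-\alpha}\norm{h}_{\Hh_\alpha}$ and $\EE\abs{\Df_s\bX^{\bx}_t}^2\lesssim\KH(t,s)^2$, the process $\int_\sft^T\Df_s\bX^{\bx}_t\,\xi(s,\bX^{\bx}_s)h^*(s)\ds$ is well defined. It solves~\eqref{eq:tangent} on $(\sft,T]$, so it coincides with $\bY^h$ there, in particular at $t=T$. The weighted Cauchy--Schwarz inequality under~\eqref{eq:fractional_condition} then makes all pairings absolutely convergent. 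This settles the state-dependent case needed for B.

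Alternatively, you can keep your truncation but measure $g=h-h_n$ in $\abs{g(T)}+\big(\int_\sft^T\KH(T,s)^2\abs{g(s)}^2\ds\big)^{1/2}$ instead of the sup norm. This quantity controls $\EE\abs{\bY^g_T}$ via Lemma~\ref{lemma:Grownall}, and it tends to zero by the same pointwise bound. The $\Hh_\alpha$-Fréchet property of $\Phi_n$ likewise has to come from such weighted estimates, not from a sup-norm comparison.

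\textbf{The rest of Part B is a correct and simpler route than the paper's.} The coupling bound $\EE\abs{M_t-M_\sft}^2\lesssim(t-\sft)^\beta$ and It\^o integration by parts on $[\sft+\ep,T]$ replace the flow-property analysis of $m_t(\widetilde{\bX}^{t,\bx})$ and the layer-cake identity. Dominated convergence in $n$ replaces Lemma~\ref{lemma:est_Dalpha}.
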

The proofs of these results are gathered in Sections~\ref{sec:proof_fIBP_A}, \ref{sec:proof_fIBP_B} and \ref{sec:proof_fIBP_C} respectively. Beforehand, let us make a few observations about the class of admissible direction~$\Hh_\alpha$ and the existence of the fractional derivative of $M$. Note that the lower $\alpha$ is, the less regular the test function~$\varphi$ must be and the more regular the direction~$h$ must be.
\begin{remark}[On the class $\Hh_\alpha$]\label{rem:Halpha}
In this remark we will set ~$K_\sigma=K_H$ to illustrate our results. Recall that in this case $h^* = \mathscr{D}^{H+1/2}_{\sft^+}(h) / \Gamma(H+1/2)$. We discuss which directions are admissible and for which $\alpha$.
\begin{enumerate}
    \item[i)] Consider a constant direction~$h\equiv1$. Then~$h^*(t)=c(t-\sft)^{-H-\half}$, where $c=(\Gamma(H+1/2)\Gamma(1/2-H))^{-1}$, and hence~$h\in\Hh_\alpha$ for all~$\alpha>H$. The map~$\Phi$ is thus differentiable in constant directions provided that $\varphi\in\Cc^\beta_{{\rm poly}}(\RR^d)$ with~$\beta>2H$.
    \item[ii)] More generally, let us consider $h(t)=(t-\sft)^{\gamma-1/2} g(t)$ where $g\in\Cc^{\beta/2}$ and $1/2-\gamma+H <\alpha < \beta/2$. Then by \cite[Theorem 13.6]{samko1993fractional} we have $\abs{h^*(t)}\lesssim\lvert\mathscr{D}^{H+1/2}_{\sft^+} (h)(t)\lvert\lesssim (t-\sft)^{\gamma-H-1}$ and hence 
    $$
\norm{h}_{\Hh_\alpha}^2 = \norm{h^*}_{L^2_\alpha}^2 \lesssim \int_{\sft}^T (s-\sft)^{2(\alpha+\gamma-1/2-H)-1}\ds<+\infty.
    $$
    If moreover $h$ is continuous then it belongs to $\Hh_\alpha$ and is an admissible direction.
    \item[iii)] This last point raises an important question: what about unbounded directions? As long as~$\norm{h}_{\Hh_\alpha}<+\infty$ the right-hand-side of~\eqref{eq:frac_IBP_formula} is well-posed and finite. Yet the domain of $\Phi$ is $\Xx$ hence differentiating in a non-continuous direction leads to ambiguities. The natural notion, that arises both in~\cite{viens2019martingale} and~\cite{gasteratos2025kolmogorov}, consists in defining the derivative in an unbounded direction~$h$ as the limit of derivatives in continuous directions~$h^\delta$ with~$h^\delta\to h$ as $\delta\to0$.
    From that point of view, we show in Corollary \ref{coro:fIBP_singular} that $h(t)=(t-\sft)^{\gamma-1/2}$ is indeed an admissible direction for~$\gamma>H+1/2-\alpha$.
    \item[iv)] Differentiating in the direction of the kernel $h(t)=K_H(t,\sft)=(t-\sft)^{H-1/2}$, as required by the functional Itô formula, remains outside the scope of our method. If we wanted to define this derivative as in the previous item we would first define $h^\delta(t)=h(t\vee(\sft+\delta))$ but then notice that, for~$\norm{h^\delta}_{\Hh_\alpha}$ to remain bounded as $\delta$ goes to zero, we would require~$\alpha>1/2$. More importantly, solving for $K_H(t,\sft)=\int_{\sft}^t K_H(t,s) h^*(s)\ds$ would informally force $h^*$ to be a Dirac mass, rendering the stochastic integral meaningless.  
\end{enumerate}
\end{remark}

\begin{remark}[On the fractional derivative]
For $\varphi\in\Cc^1_b(\RR^d)$ the Clark--Ocone formula reads
\begin{align*}
M_T-M_t = \int_t^T \EE[\Df_s\varphi(\bX^{\bx}_T)\lvert\Ff_s]\D W_s    
\end{align*}
and in virtue of the estimate $\EE\abs{\Df_s\varphi(\bX_T^{\bx})}^2\lesssim \EE\abs{\Df_s\bX_T^{\bx}}^2\lesssim K_H(T,s)^2$ we obtain
\begin{align}\label{eq:DalphaM_condition_CO}
    \EE\left[\int_{\sft}^T (s-\sft)^{-2\alpha} \D \langle M\rangle_s \right] \lesssim \int_{\sft}^T (s-\sft)^{-2\alpha} (T-s)^{2H-1}\ds \lesssim (T-\sft)^{2(H-\alpha)}.
\end{align}
for all $\alpha\in(0,1/2)$. This immediately yields a sufficient condition for \eqref{eq:frac_IBP_formula} to hold.

If $\varphi$ is not differentiable, another condition for $\scrD^\alpha_{T^-}(M-M_T)(\sft)$ to exist would be that~$M\in\Cc^{\alpha+\ep}$ almost surely, for some~$\ep>0$, see \cite[Lemma 13.1]{samko1993fractional} for the left fractional derivative. However, $M$ is only expected to be in~$\Cc^{H-\ep}$ which would restrict to~$\alpha<H$ and rule out constant directions (see Remark~\ref{rem:Halpha} i)). Equation~\eqref{eq:DalphaM_condition_CO} however suggests to dive deeper into the study of the regularity. As the proof of B) demonstrates we care about the regularity of $M$ around $\sft$, which we prove in~\eqref{eq:est_Mt_reg} to be of the order $\beta/2$ when $\varphi\in\Cc^\beta_{{\rm poly}}(\RR^d)$. We lose a factor $2$ because of the Brownian motion but the singularity of the kernel only intervenes around $T$, as in~\eqref{eq:DalphaM_condition_CO}. We then obtain, provided $\beta>2\alpha$,
\begin{align*}
    \EE\left[\int_{\sft}^T (s-\sft)^{-2\alpha} \D \langle M\rangle_s \right] \lesssim \int_{\sft}^T (s-\sft)^{\beta-2\alpha-1} (T-s)^{\beta(2H-1)}\ds \lesssim (T-\sft)^{2(\beta H-\alpha)}.
\end{align*}
Further notice that, if $h(t)=(t-\sft)^{\gamma-1/2}$ with $\gamma>H+1/2-\alpha$, then $\norm{h}_{\Hh_\alpha}\lesssim (T-\sft)^{\gamma-1/2 + \alpha-H}$. Combining these estimates, we obtain under the assumptions of Theorem~\ref{thm:frac_IBP} B) or Corollary~\ref{coro:fIBP_singular} (depending whether or not $\gamma\ge1/2$) the gradient bound
    \begin{align}\label{eq:gradient_bound}
        \abs{D\Phi(\bx)(h)} \le \EE\left[\int_{\sft}^T (s-\sft)^{-2\alpha}\D \langle M\rangle_s\right]^\half \norm{\xi}_\infty \norm{h}_{\Hh_\alpha}
        \lesssim (T-\sft)^{\gamma-1/2+H(\beta-1)}.
    \end{align}
    When $\varphi\in\Cc^1_{b}(\RR^d)$ we easily obtain by the chain rule a bound of the form~$C(T-\sft)^{\gamma-1/2}$, hence~\eqref{eq:gradient_bound} highlights the regularity loss induced by the lack of differentiability of the test function.
\end{remark}
This follow-up result deals with singular derivatives in the case of unbounded power-law kernel and direction.
\begin{corollary}\label{coro:fIBP_singular}
    Let Assumption \ref{as:coefs} hold. Further set $0<H<\alpha<\beta/2<1/2$,  $K_\sigma=K_H$ and~$\phi(\bx)=\varphi(\bx(T))$ for some $\varphi\in \Cc^\beta_{{\rm poly}}(\RR^d)$. Let $h(t)=(t-\sft)^{\gamma-1/2}$ and $h^\delta(t) = (\delta\vee (t-\sft))^{\gamma-1/2}$ for all~$\delta>0$, where $\gamma>1/2+H-\alpha$. Then $h^\delta\in\Hh_\alpha$ and the following limit holds:
    \begin{align*}
        \lim_{\delta\searrow0} D\Phi(\bx)(h^\delta)
        = \Gamma(1-\alpha) \EE\left[\scrD^{\alpha}_{T^-}\Big(M_T-M_\cdot\Big)(\sft) \int_\sft^T (s-\sft)^\alpha \Big\langle \xi(s,\bX_s^{\bx}) h^*(s),  \D W_s\Big\rangle \right].
    \end{align*}
    \end{corollary}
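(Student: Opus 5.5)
The plan is to apply Theorem~\ref{thm:frac_IBP} B) to each truncated direction $h^\delta$, and then pass to the limit $\delta\searrow0$ inside the fractional IBP formula. Write the right-hand side of~\eqref{eq:frac_IBP_formula} as
\[
\Psi_\alpha(\bx;g):=\Gamma(1-\alpha)\,\EE\big[\scrD^{\alpha}_{T^-}(M_T-M_\cdot)(\sft)\,\pi_\alpha(g)\big],\qquad \pi_\alpha(g):=\int_\sft^T (s-\sft)^\alpha\big\langle \xi(s,\bX_s^{\bx}) g^*(s),\D W_s\big\rangle .
\]
This is linear in $g^*$. Combining Cauchy--Schwarz with the It\^o isometry and $\norm{\xi}_\infty<\infty$ gives
\[
\abs{\Psi_\alpha(\bx;g)-\Psi_\alpha(\bx;h)}\le \Gamma(1-\alpha)\norm{\scrD^{\alpha}_{T^-}(M_T-M_\cdot)(\sft)}_{L^2(\Omega)}\norm{\xi}_\infty\norm{g^*-h^*}_{L^2_\alpha}.
\]
The first factor is finite by part B), because $\alpha<\beta/2$. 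It therefore suffices to show two things: that $h^\delta\in\Hh_\alpha$, and that $(h^\delta)^*\to h^*$ in $L^2_\alpha$, where $h^*=\Gamma(H+1/2)^{-1}\scrD^{H+1/2}_{\sft^+}h=c\,(t-\sft)^{\gamma-H-1}$.

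\textbf{Membership.} Each $h^\delta$ is continuous and bounded. Shift time so that $\sft=0$ and compute
\[
(h^\delta)^*(t)=\Gamma(H+1/2)^{-1}\frac{\D}{\D t}\scrI^{1/2-H}_{0^+}h^\delta(t).
\]
For $t\le\delta$, $h^\delta\equiv\delta^{\gamma-1/2}$ is constant, so $(h^\delta)^*(t)=c\,\delta^{\gamma-1/2}t^{-H-1/2}$. This is in $L^2_\alpha$ near $0$ exactly when $\alpha>H$, which is where the hypothesis $H<\alpha$ is used. For $t>\delta$, split $h^\delta=\delta^{\gamma-1/2}+(h^\delta-\delta^{\gamma-1/2})$. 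The second piece vanishes on $[0,\delta]$ and is smooth and $\Cc^{1}$ on $(\delta,T]$ with a kink at $\delta$. Its fractional derivative can be estimated via the Marchaud representation, or \cite[Theorem 13.6]{samko1993fractional} as in Remark~\ref{rem:Halpha} ii), and is bounded by $C(t-\delta)^{\gamma-3/2}(t-\delta)^{1/2-H}$ plus integrable terms. The constant piece contributes $c\,\delta^{\gamma-1/2}t^{-H-1/2}$, which is harmless. Hence $(h^\delta)^*\in L^2_\alpha$ and $h^\delta\in\Hh_\alpha$, and Theorem~\ref{thm:frac_IBP} B) gives $D\Phi(\bx)(h^\delta)=\Psi_\alpha(\bx;h^\delta)$.

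\textbf{Convergence.} Part B) covers the case $\gamma\ge1/2$; the case $\gamma<1/2$, where $h$ is unbounded, is where the limit is genuinely needed. I would rather not bound derivatives of the difference directly. Instead, note that $(h^\delta)^*$ is an exact scaling: $h^\delta(t)=\delta^{\gamma-1/2}\,\tilde h(t/\delta)$ with $\tilde h(u)=(1\vee u)^{\gamma-1/2}$, and hence
\[
(h^\delta)^*(t)=\delta^{\gamma-H-1}\,\tilde h^*(t/\delta).
\]
Here $\tilde h^*$ is computed once on $(0,\infty)$. Near $0$ it behaves like $u^{-H-1/2}$. At infinity it behaves like $c\,u^{\gamma-H-1}+O(u^{-H-1/2})$, the error coming from the fact that $\tilde h$ differs from $u^{\gamma-1/2}$ only on $[0,1]$. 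Substituting $t=\delta u$ gives
\[
\norm{(h^\delta)^*-h^*}_{L^2_\alpha}^2=\delta^{2(\gamma-H-1)+2\alpha+1}\int_0^{T/\delta}u^{2\alpha}\abs{\tilde h^*(u)-c\,u^{\gamma-H-1}}^2\D u.
\]
Near $0$ the integrand is integrable since $\alpha>H$ and $\gamma>1/2+H-\alpha$. At infinity the difference decays like $u^{-H-1/2}$, so the integral grows at most like $(T/\delta)^{2\alpha-2H}$, or like $\log$ in the borderline case. The total is then $O(\delta^{2(\gamma-1/2)}\cdot\ldots)$, and a careful exponent count, which I expect to reduce to $\delta^{2(\gamma+\alpha-H-1/2)}\to0$ after tracking the tail of the difference, shows the product vanishes. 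This tail/exponent bookkeeping for $\tilde h^*-c\,u^{\gamma-H-1}$ at infinity is the main obstacle. If it is delicate, I would fall back on dominated convergence: $(h^\delta)^*\to h^*$ pointwise on $(\sft,T]$, with a dominating function $C(t-\sft)^{\gamma-H-1}+C\,\delta^{\gamma-1/2}(t-\sft)^{-H-1/2}\one_{t\le 2\delta}$ plus a local term near $t=\delta$. Its contribution to $L^2_\alpha$ is $O(\delta^{2(\gamma+\alpha-H)-1})\to0$ because $\gamma>1/2+H-\alpha$.
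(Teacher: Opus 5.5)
\textbf{Same reduction as the paper, but the key estimate is missing.} You reduce the problem exactly as the paper does. You apply Theorem~\ref{thm:frac_IBP} B) to each continuous $h^\delta$, use Cauchy--Schwarz and the It\^o isometry to reduce everything to $\norm{(h^\delta)^*-h^*}_{L^2_\alpha}\to0$, and then compute. The scaling identity $(h^\delta)^*(t)=\delta^{\gamma-H-1}\tilde h^*(t/\delta)$ is a neat way to organise the computation, and your prefactor $\delta^{2(\gamma+\alpha-H)-1}$ is correct.

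The gap is the step you flag and leave open: the behaviour of $\tilde h^*(u)-c\,u^{\gamma-H-1}$ for $u>1$. What you do say about it is wrong in a way that matters. With your rate $O(u^{-H-1/2})$, the $u$-integral grows like $\delta^{-2(\alpha-H)}$ and the product is of order $\delta^{2\gamma-1}$. This diverges exactly in the case $\gamma<1/2$ that the corollary is about. The dominated-convergence fallback does not escape this. Translated back to $t$, a $u^{-H-1/2}$ tail only gives $\abs{(h^\delta)^*-h^*}(t)\lesssim\delta^{\gamma-1/2}(t-\sft)^{-H-1/2}$ for $t>\sft+\delta$. When $\gamma<1/2$ this yields neither the pointwise convergence nor the $\delta$-uniform domination by $C(t-\sft)^{\gamma-H-1}$ that you assert.

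\textbf{The missing observation.} The difference $f:=\tilde h-u^{\gamma-1/2}=(1-u^{\gamma-1/2})\one_{[0,1]}(u)$ is supported in $[0,1]$. So for $u>1$ you may differentiate under the integral in $\scrI^{1/2-H}_{0^+}f$, which gains one extra power of decay:
\[
\tilde h^*(u)-c\,u^{\gamma-H-1}=-c_H\int_0^1 (u-r)^{-H-3/2}\big(1-r^{\gamma-1/2}\big)\,\D r,\qquad c_H>0.
\]
\begin{itemize}
\item Since $\abs{1-r^{\gamma-1/2}}\lesssim 1-r$ near $r=1$ and $H+1/2<1$, this is bounded on $(1,2]$.
\item Since $1-r^{\gamma-1/2}\in L^1(0,1)$, it is $O(u^{-H-3/2})$ for $u\ge2$.
\item Since $\alpha<H+1$, the tail $\int_1^\infty u^{2\alpha}\abs{\tilde h^*(u)-c\,u^{\gamma-H-1}}^2\D u$ is therefore finite.
\end{itemize}
Combined with your analysis near $u=0$, the whole $u$-integral converges. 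Hence $\norm{(h^\delta)^*-h^*}^2_{L^2_\alpha}\lesssim\delta^{2(\gamma+\alpha-H)-1}\to0$, and this also gives $h^\delta\in\Hh_\alpha$ for free. This is precisely the mechanism of the paper's proof. On $(\sft+\delta,T]$ the paper differentiates $\int_\sft^{\sft+\delta}(s-r)^{-H-1/2}\big((r-\sft)^{\gamma-1/2}-\delta^{\gamma-1/2}\big)\D r$ to obtain the kernel $(s-r)^{-H-3/2}$, then evaluates a Beta-type integral.

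\textbf{A smaller slip in the membership step.} On $(\delta,T]$ the bound should be $C\delta^{\gamma-3/2}(t-\delta)^{1/2-H}$, because $\abs{\frac{\D}{\D r}r^{\gamma-1/2}}\le C\delta^{\gamma-3/2}$ there. The bound you wrote, $C(t-\delta)^{\gamma-1-H}$, is not square integrable at $t=\delta$ when $\gamma<H+1/2$.
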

\begin{proof}
\emph{\underline{Step 1: $\norm{h}_{\Hh_\alpha}<\infty$.}} Even though $h$ is not continuous, we start by proving that its $\Hh_\alpha$ norm is finite. Since $K_\sigma=K_H$ we have 
\begin{align*}
    h^{*}(s)=\scrD^{H+1/2}_{\sft^+}(h)(s)
    = \frac{1}{\Gamma(1/2-H)}\frac{\D}{\ds}\int_{\sft}^s (s-r)^{-H-1/2}(r-\sft)^{\gamma-1/2} \dr =c (s-\sft)^{\gamma-H-1},
\end{align*}
for some constant $c>0$. Therefore
\begin{align}\label{eq:powerlaw_est_h}
    \norm{h}_{\Hh_\alpha}^2 = c^2 \int_{\sft}^T (s-\sft)^{2(\alpha+\gamma-H-1)}\ds <\infty
\end{align}
because, by assumption, $\gamma>1/2+H-\alpha$.

\emph{\underline{Step 2: $h^\delta\in\Hh_\alpha$.}} 
Unlike $h$, it is clear that $h^\delta$ is continuous. Since $K_\sigma=K_H$ we have 
\begin{align*}
    h^{\delta,*}(s)=\scrD^{H+1/2}_{\sft^+}(h^\delta)(s)
    = \frac{1}{\Gamma(1/2-H)}\frac{\D}{\ds}\int_{\sft}^s (s-r)^{-H-1/2}h^\delta(r) \dr.
\end{align*}
In the case $s\in[\sft,\sft+\delta]$ we have $h^{\delta,*}(s)= \frac{\delta^{\gamma-1/2} (s-\sft)^{-H-\half}}{\Gamma(1/2-H)}$ and hence
\begin{align}\label{eq:hdeltastar_1}
    \int_{\sft}^{\sft+\delta} \abs{h^{\delta,*}(s)}^2 (s-\sft)^{2\alpha}\ds 
    \lesssim \delta^{2(\alpha-H+\gamma-1/2)},
\end{align}
since $\gamma>1/2+H-\alpha$ by assumption.

Moreover, $h=h^\delta$ on~$[\sft+\delta,T]$ and, for $r\in[\sft,\sft+\delta]$ we have, for any~$\mu>0$,
\begin{align*}
    (r-\sft)^{\gamma-1/2}-\delta^{\gamma-1/2} = (1/2-\gamma)\int_r^{\sft+\delta} (u-\sft)^{\gamma-3/2}\du 
    &\lesssim \int_r^{\sft+\delta} (u-r)^{\mu-1}\du (r-\sft)^{\gamma-1/2-\mu} \\
    &\lesssim (\sft+\delta-r)^\mu (r-\sft)^{H-1/2-\mu}\delta^{\gamma-H},
\end{align*}
where we also used $\gamma-H>0$. 
Hence for $s\in(\sft+\delta,T]$ we obtain
\begin{align*}
    \abs{h^*(s)-h^{\delta,*}(s) }
    &= \abs{\frac{\D}{\ds}\int_{\sft}^{\sft+\delta} (s-r)^{-H-1/2}((r-\sft)^{\gamma-1/2}-\delta^{\gamma-1/2}) \dr}\\
    &\lesssim \delta^{\gamma-H} \int_{\sft}^{\sft+\delta} (s-r)^{-H-3/2}(\sft+\delta-r)^\mu (r-\sft)^{H-1/2-\mu}\dr \\
    &=\delta^{\gamma-H} \delta^{H+1/2} (s-\sft-\delta)^{\mu-H-1/2}(s-\sft)^{-\mu-1} B(1/2+H-\mu,\mu+1),
\end{align*}
where we used identity 3.199 in \cite{gradshteyn2007table} to compute the integral. Let $\mu\in (H,\gamma+\alpha-1/2)$, which is non-empty by assumption. Over this interval it entails
\begin{equation}
\begin{aligned}\label{eq:bound_h-hdelta}
    \int_{\sft+\delta}^T (h^*(s)-h^{\delta,*}(s))^2 (s-\sft)^{2\alpha}\ds 
    &\lesssim \delta^{2\gamma+1}  \int_{\sft+\delta}^T (s-\sft-\delta)^{2\mu-2H-1} (s-\sft)^{2\alpha-2\mu-2}\ds \\
    &\lesssim \delta^{2(\gamma+\alpha-\mu-1/2)} (T-\sft-\delta)^{2(\mu-H)}
\end{aligned}
\end{equation}
which is finite and tends to zero as $\delta$ goes to zero. Combining \eqref{eq:powerlaw_est_h}, \eqref{eq:hdeltastar_1} and \eqref{eq:bound_h-hdelta} entails
\begin{align*}
    \norm{h^\delta}_{\Hh_\alpha} 
    \le \norm{h^\delta\one_{[\sft,\sft+\delta]}} + \norm{(h^\delta-h)\one_{[\sft+\delta,T]}}_{\Hh_\alpha} + \norm{h}_{\Hh_\alpha}
    <\infty.
\end{align*}

\emph{\underline{Step 3: Convergence.}}
    By Cauchy--Schwarz inequality and $\scrD^{\alpha}_{T^-}\Big(M_T-M_\cdot\Big)(\sft)\in L^2(\Omega)$ we obtain
    \begin{align*}
        \abs{D\Phi(\bx)(h^\delta)
        - \Gamma(1-\alpha) \EE\left[\scrD^{\alpha}_{T^-}\Big(M_T-M_\cdot\Big)(\sft) \int_\sft^T (s-\sft)^\alpha \Big\langle \xi(s,\bX_s^{\bx}) h^*(s),  \D W_s\Big\rangle \right]}
        \lesssim \norm{h-h^\delta}_{\Hh_\alpha}.
    \end{align*}
Since $K_\sigma=K_H$ we have 
\begin{align*}
    h^*(s)-h^{\delta,*}(s)=\scrD^{H+1/2}_{\sft^+}(h-h^\delta)(s)
    = \frac{1}{\Gamma(1/2-H)}\frac{\D}{\ds}\int_{\sft}^s (s-r)^{-H-1/2}(h-h^\delta)_r \dr.
\end{align*}
In the case $s\in[\sft,\sft+\delta]$, integrating and then differentiating  yields
\begin{align*}
    \abs{h^*(s)-h^{\delta,*}(s) }
    &= \frac{1}{\Gamma(1/2-H)}\abs{\frac{\D}{\ds}\int_{\sft}^s (s-r)^{-H-1/2}((r-\sft)^{\gamma-1/2}-\delta^{\gamma-1/2}) \dr}\\
    &= \frac{1}{\Gamma(1/2-H)}\abs{\frac{\D}{\ds}\left( (s-\sft)^{\gamma-H} B(1/2-H,1/2+\gamma) - \frac{\delta^{\gamma-1/2}(s-\sft)^{1/2-H}}{1/2-H} \right) }\\
    &\lesssim (s-\sft)^{\gamma-H-1} + \delta^{\gamma-1/2}(s-\sft)^{-1/2-H}.
\end{align*}
On this interval we thus have
\begin{align*}
       \int_{\sft}^{\sft+\delta} (h^*(s)-h^{\delta,*}(s))^2 (s-\sft)^{2\alpha}\ds 
       &\lesssim \int_{\sft}^{\sft+\delta} (s-\sft)^{2(\gamma-H-1)+2\alpha} \ds + \delta^{2\gamma-1} \int_{\sft}^{\sft+\delta} (s-\sft)^{2\alpha-1-2H} \ds \\
       &\lesssim \delta^{2(\gamma-H+\alpha)-1},
\end{align*}
which tends to zero because $\gamma>1/2+H-\alpha$ by hypothesis. The interval $[\sft+\delta,T]$ was already dealt with in Step 2 \eqref{eq:bound_h-hdelta} hence the proof is complete.
\end{proof}

\subsection{Proof of Theorem \ref{thm:frac_IBP} A)}\label{sec:proof_fIBP_A}
Since~$\phi\in\Cc_{b,\Xx,{\rm Lip}}^1(\Xx)$, $\phi(\bX^{\bx})$ is Malliavin differentiable and the Clark--Ocone formula entails~ $M_t:=\int_\sft^t \big\langle\EE[\Df_r \phi(\bX^{\bx})\lvert \Ff_r] ,\D W_r\big\rangle$. Standard estimates and Kolmogorov continuity theorem then guarantee that~$M$ has almost surely continuous paths. Introduce the process~$\mathscr{M}^\alpha$ defined for all~$t\in[\sft,T]$ as~$\mathscr{M}^\alpha_t:=\int_t^T (r-t)^{-\alpha}\D M_r$. By assumption we have 
\begin{align*}
    \EE\abs{\mathscr{M}^\alpha_{\sft}}^2 
        = \EE\int_{\sft}^T (r-\sft)^{-2\alpha} \D \langle M\rangle_r <\infty
\end{align*}
and thus~$\mathscr{M}^\alpha_{\sft}\in L^2(\Omega)$. 
Resuming the proof of Theorem~\ref{th:main_IBP}  from~\eqref{eq:IBP_computation}, we are thus allowed to introduce a fractional kernel before applying the integration by parts, as follows:
\begin{align*}
    D \Phi(\bx)(h) 
    &= \EE\left[\int_\sft^T \EE[\Df_r \phi(\bX^{\bx})^\top\lvert \Ff_r] \xi(r,\bX^{\bx}_r) h^*(r)  \dr \right]\\
    &= \EE\left[\int_\sft^T  (r-\sft)^{-\alpha}\Big\langle \EE[\Df_r \phi(\bX^{\bx})\lvert \Ff_r],\D W_r \Big\rangle \int_\sft^T (r-\sft)^\alpha \Big\langle\xi(r,\bX^{\bx}_r) h^*(r) , \D W_r\Big\rangle \right]\\
    &= \EE\left[\mathscr{M}^\alpha_{\sft}  \int_\sft^T (r-\sft)^\alpha \Big\langle\xi(r,\bX^{\bx}_r) h^*(r) ,  \D W_r \Big\rangle\right].
\end{align*}
It remains to link $\mathscr{M}^\alpha$ with the fractional derivative defined in~\eqref{eq:def_fracD_right}. In virtue of the stochastic Fubini theorem we have
\begin{align}\label{eq:Malpha_eq_DalphaM}
    \scrI^\alpha_{T^-}(\mathscr{M}^\alpha)(t) 
    &= \frac{1}{\Gamma(\alpha)}\int_t^T (s-t)^{\alpha-1} \left(\int_s^T (r-s)^{-\alpha}\D M_r \right)\ds\\
    &=\frac{1}{\Gamma(\alpha)} \int_t^T \left( \int_t^r(s-t)^{\alpha-1} (r-s)^{-\alpha} \ds\right) \D M_r\nonumber\\
    &=\Gamma(1-\alpha) (M_T-M_t). \nonumber
\end{align}
Applying the fractional derivative to both sides entails~$\mathscr{M}^\alpha = \Gamma(1-\alpha) \scrD^{\alpha}_{T^-}(M_T-M_\cdot)$. This yields, as desired,
\begin{align*}
    D\Phi(\bx)(h) 
    &= \Gamma(1-\alpha) \EE\left[\scrD^{\alpha}_{T^-}\Big(M_T-M_\cdot\Big)(\sft) \int_\sft^T (r-\sft)^\alpha \Big\langle\xi(r,\bX^{\bx}_r) h^*(r),  \D W_r \Big\rangle\right].
\end{align*}

\subsection{Proof of Theorem \ref{thm:frac_IBP} B)}\label{sec:proof_fIBP_B}

There are three steps to this proof. In the first one we study the regularity properties of processes related to~$\bX^{\bx}$. These allow us, in the second step, to derive the necessary path regularity of the martingale~$M_t=\EE[\varphi(\bX^{\bx}_T)\lvert\Ff_t]$, thereby securing condition~\eqref{eq:fractional_condition}. Under the assumption that~$\varphi\in\Cc^1_b(\RR^d)$, this guarantees the fractional integration by parts holds by Theorem~\ref{thm:frac_IBP} A). The last step consists in relaxing the latter assumption by an approximation argument. 

\emph{\underline{Step 1: Regularity estimates.} }
The analysis of $M$ requires the introduction of new stochastic processes and some preliminary results pertaining to them. Let us introduce the family of SVEs indexed by~$t\in\Tt,\bx\in \Cc([t,T];\RR^d)$,
    \begin{align*}
        \bX^{t,\bx}_\tau = \bx(\tau) + \int_{t}^\tau K_b(\tau,s)b(s,\bX_s^{t,\bx})\ds + \int_t^\tau K_\sigma(\tau,s)\sigma(s,\bX_s^{t,\bx})\D W_s, \quad \tau\in[t,T].
    \end{align*}
    For each $t\in\Tt$ we also define the auxiliary~$\Ff_t$-measurable process~$\widetilde{\bX}^{t,\bx}$ indexed by 
    \begin{align*}
        \widetilde{\bX}^{t,\bx}_\tau := \bx(\tau) + \int_{\sft}^t K_b(\tau,s)b(s,\bX_s^{\sft,\bx})\ds + \int_\sft^t K_\sigma(\tau,s)\sigma(s,\bX_s^{\sft,\bx})\D W_s,\quad \tau\in[t,T]. 
    \end{align*}
    Pathwise uniqueness yields~$\bX^{\sft,\bx}=\bX^{\bx}$ as defined by Equation~\eqref{eq:main_SVE} and it also entails the flow property~$\bX^{\sft,\bx}=\bX^{t,\widetilde{\bX}^{t,\bx}}$ for all~$t\in\Tt$ almost surely since the SVE satisfied by~$\bX^{\bx}$ can be reformulated as
    \begin{align*}
        \bX^{\bx}_\tau = \widetilde{\bX}^{t,\bx}_\tau + \int_t^\tau K_b(\tau,s)b(s,\bX_s^{\sft,\bx})\ds + \int_t^\tau K_\sigma(\tau,s)\sigma(s,\bX_s^{\sft,\bx})\D W_s, \quad \tau\in[t,T].
    \end{align*} 
\begin{lemma}\label{lemma:est_X}
    Under Assumptions~\ref{as:kernel} and~\ref{as:coefs}, we have for all $\sft\le s\le t< \tau\le T$, $\bx,\bm{y}\in\Xx$ and $p\ge2$
    \begin{align}
        &\EE\abs{\bX^{t,\bx}_\tau-\bX^{t,\bm{y}}_\tau}^p \lesssim \abs{\bx(\tau)-\bm{y}(\tau)}^p+ \int_t^\tau (\tau-r)^{2H-1} \abs{\bx(r)-\bm{y}(r)}^p\dr; \label{eq:est_reg_XxXy}\\
        &\EE\abs{\bX^{t,\bx}_\tau-\bX^{s,\bx}_\tau}^2
        \lesssim (t-s) (\tau-t)^{2H-1} \label{eq:est_reg_XtXsft}\\
        &\EE\abs{\widetilde{\bX}^{t,\bx}_\tau - \widetilde{\bX}^{s,\bx}_\tau}^p \lesssim (t-s)^{p/2} (\tau-t)^{(H-1/2)p}. \label{eq:est_reg_Xtildets}
    \end{align}
\end{lemma}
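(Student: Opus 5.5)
The three estimates will be handled in order, each by the standard route for Volterra equations: Burkholder--Davis--Gundy, Lipschitz bounds on $b,\sigma$, Assumption~\ref{as:kernel}, and then the Grönwall Lemma~\ref{lemma:Grownall} with the kernel $K_H^2(\tau,r)=(\tau-r)^{2H-1}$, which belongs to $\Kk_0$.

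\emph{Estimate \eqref{eq:est_reg_XxXy}.} Take the difference of the two SVEs started at time $t$. Apply Jensen to the drift term and BDG plus Jensen (or Hölder against the finite measure $K_H(\tau,r)^2\dr$) to the stochastic term. Since $b,\sigma$ are Lipschitz and $\abs{K_b}+\abs{K_\sigma}^2\lesssim K_H^2$, this gives
\begin{align*}
f(\tau):=\EE\abs{\bX^{t,\bx}_\tau-\bX^{t,\bm y}_\tau}^p\lesssim \abs{\bx(\tau)-\bm y(\tau)}^p+\int_t^\tau (\tau-r)^{2H-1}f(r)\dr .
\end{align*}
Lemma~\ref{lemma:Grownall} 1) then gives $f\le g+\int R\,g$ with $g(\tau)=\abs{\bx(\tau)-\bm y(\tau)}^p$. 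The resolvent of the positive kernel $CK_H^2$ is given explicitly by \eqref{eq:Res_powerlaw} and satisfies $R_{C,H}(\tau,r)\lesssim(\tau-r)^{2H-1}$, since the Mittag--Leffler factor is bounded on compacts. This is exactly \eqref{eq:est_reg_XxXy}.

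\emph{Estimate \eqref{eq:est_reg_Xtildets}.} By definition,
\begin{align*}
\widetilde{\bX}^{t,\bx}_\tau-\widetilde{\bX}^{s,\bx}_\tau=\int_s^t K_b(\tau,r)b(r,\bX_r)\dr+\int_s^tK_\sigma(\tau,r)\sigma(r,\bX_r)\D W_r .
\end{align*}
The coefficients are bounded, so BDG gives the bound $\big(\int_s^t(\tau-r)^{2H-1}\dr\big)^{p/2}$, plus a drift term that is even smaller. Because $r\le t<\tau$, we have $(\tau-r)^{2H-1}\le(\tau-t)^{2H-1}$ when $H<1/2$. When $H\ge1/2$ the kernel is bounded by $T^{2H-1}$, and we compare with $(\tau-t)^{2H-1}$ up to a constant; on the compact interval this is harmless for $H\ge1/2$ only if one bounds more carefully, and I expect the paper's regime of interest to be $H<1/2$. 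This yields $(t-s)^{p/2}(\tau-t)^{(H-1/2)p}$.

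\emph{Estimate \eqref{eq:est_reg_XtXsft}.} This is the main point and uses the flow property. Writing $\bX^{s,\bx}=\bX^{t,\widetilde{\bX}^{s\to t}}$ requires the version of the flow started at $s$: the solution from time $s$ with curve $\bx$, restarted at time $t$, equals $\bX^{t,\widetilde{\bx}}$ with
\begin{align*}
\widetilde{\bx}(\tau)=\bx(\tau)+\int_s^tK_b(\tau,r)b(r,\bX^{s,\bx}_r)\dr+\int_s^tK_\sigma(\tau,r)\sigma(r,\bX^{s,\bx}_r)\D W_r .
\end{align*}
This curve is $\Ff_t$-measurable, and $W$ after time $t$ is independent of $\Ff_t$. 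We can therefore apply \eqref{eq:est_reg_XxXy} conditionally on $\Ff_t$ with $\bm y=\widetilde{\bx}$ and $p=2$, and take expectations:
\begin{align*}
\EE\abs{\bX^{t,\bx}_\tau-\bX^{s,\bx}_\tau}^2\lesssim \EE\abs{\widetilde{\bx}(\tau)-\bx(\tau)}^2+\int_t^\tau(\tau-r)^{2H-1}\EE\abs{\widetilde{\bx}(r)-\bx(r)}^2\dr .
\end{align*}
The argument leading to \eqref{eq:est_reg_Xtildets} gives $\EE\abs{\widetilde{\bx}(r)-\bx(r)}^2\lesssim(t-s)(r-t)^{2H-1}$. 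The first term is then of the required form. The second is $(t-s)\int_t^\tau(\tau-r)^{2H-1}(r-t)^{2H-1}\dr\lesssim(t-s)(\tau-t)^{4H-1}$, and $(\tau-t)^{4H-1}\lesssim(\tau-t)^{2H-1}$ on a bounded interval.

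The delicate step is justifying the conditional use of \eqref{eq:est_reg_XxXy} with a random $\Ff_t$-measurable initial curve. The cleanest route is to apply the Grönwall argument directly to the difference, with conditional expectations $\EE[\cdot\lvert\Ff_t]$ in place of $\EE$. The BDG and Lipschitz steps go through unchanged because all integrals start at $t$.
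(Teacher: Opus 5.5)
Your proposal is correct. For \eqref{eq:est_reg_XxXy} and \eqref{eq:est_reg_Xtildets} it is the paper's argument: BDG and Jensen with the H\"older trick, then Lemma~\ref{lemma:Grownall} with $R_{C,H}(\tau,r)\lesssim(\tau-r)^{2H-1}$ for the first, and a direct BDG bound with bounded coefficients for the third.

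For \eqref{eq:est_reg_XtXsft} the paper uses neither the flow property nor any conditioning. It subtracts the two equations and splits the $\bX^{s,\bx}$ integrals at $t$. The $\int_s^t$ pieces become a forcing term bounded by $\int_s^t K_H(\tau,r)^2\dr\le(t-s)(\tau-t)^{2H-1}$. It then applies Gr\"onwall to the unconditional quantity $\EE\abs{\bX^{t,\bx}_\tau-\bX^{s,\bx}_\tau}^2$ and ends with the same Beta integral $(t-s)\,\mathrm{B}(2H,2H)(\tau-t)^{4H-1}$ that you obtain.

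This is exactly the ``cleanest route'' you mention at the end, and it shows the conditioning is superfluous. All stochastic integrals in the Gr\"onwall step start at $t$ with adapted integrands, so ordinary BDG applies. The $\Ff_t$-measurable curve $\widetilde{\bx}$ only enters through $\EE\abs{\widetilde{\bx}(r)-\bx(r)}^2$. Framing the step as a conditional application of \eqref{eq:est_reg_XxXy} would additionally require well-posedness of the SVE with a random initial curve, conditional BDG, and a choice of versions so that the conditional Gr\"onwall inequality holds for all $\tau$ simultaneously. These are costs with no benefit.

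Your hedge about $H\ge 1/2$ should be sharpened rather than left to ``more careful bounds'': for $H>1/2$, estimates \eqref{eq:est_reg_XtXsft} and \eqref{eq:est_reg_Xtildets} are false. Take $d=m=1$, $b\equiv0$, $\sigma\equiv1$, $K_\sigma=K_H$. Both differences then equal $\pm\int_s^t(\tau-r)^{H-1/2}\D W_r$. This Gaussian variable has variance $\big((\tau-s)^{2H}-(\tau-t)^{2H}\big)/(2H)$, which tends to $(t-s)^{2H}/(2H)>0$ as $\tau\searrow t$, while $(t-s)(\tau-t)^{2H-1}\to0$. The paper's proof silently uses the same monotonicity $(\tau-r)^{2H-1}\le(\tau-t)^{2H-1}$ for $r\le t$, so the lemma must be read with $H\le 1/2$. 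This is a restriction of the statement, not a gap in your argument relative to the paper's.
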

\begin{proof}
    \textbf{Proof of~\eqref{eq:est_reg_XxXy}.} The proof is identical to Step 1 of the proof of Lemma~\ref{lemma:cty_Z}, in particular Equation~\eqref{eq:bound_DeltaX} with the estimate~$\abs{R_{C,H}(t,s)}\lesssim K_H(\tau,s)^2= (t-s)^{2H-1}$.

    \textbf{Proof of~\eqref{eq:est_reg_XtXsft}.} We observe that for all~$\sft\le s<t<\tau$ 
    \begin{align*}
        \EE\abs{\bX^{t,\bx}_\tau-\bX^{s,\bx}_\tau}^2 
        &\lesssim \EE\abs{\int_t^\tau K_b(\tau,r)\Big(b(\bX^{t,\bx}_r)-b(\bX^{s,\bx}_r)\Big)\dr}^2 
        + \EE\abs{\int_s^t K_b(\tau,r)b(\bX^{s,\bx}_r)\dr}^2 \\
        & + \EE\abs{\int_t^\tau K_\sigma(\tau,r)\Big(\sigma(\bX^{t,\bx}_r)-\sigma(\bX^{s,\bx}_r)\Big)\D W_r}^2 
        + \EE\abs{\int_s^t K_\sigma(\tau,r)\sigma(\bX^{s,\bx}_r)\D W_r}^2 \\
        &\lesssim \EE\int_t^\tau K_H(\tau,r)^2 \abs{\bX^{t,\bx}_s-\bX^{s,\bx}_r}^2 \dr+ \int_s^t K_H(\tau,r)^2\dr\\
        &\lesssim \int_t^\tau K_H(\tau,r)^2 \EE\abs{\bX^{t,\bx}_r-\bX^{s,\bx}_r}^2 \dr +(t-s)(\tau-t)^{2H-1}
    \end{align*}
    where we used Jensen's and BDG inequalities, the boundedness and Lipschitz continuity of~$b,\sigma$ and Assumption~\ref{as:kernel}. 
    We apply the Volterra--Gr\"onwall inequality from Lemma~\ref{lemma:Grownall} where we denote by $R_{C,H}$ the resolvent of~$CK_H^2$ \eqref{eq:Res_powerlaw} and use that~$R_{C,H}(\tau,r)\lesssim (\tau-r)^{2H-1}$:
    \begin{align*}
        \EE\abs{\bX^{t,\bx}_\tau-\bX^{s,\bx}_\tau}^2 
        &\lesssim (t-s)(\tau-t)^{2H-1} + \int_t^\tau R_{C,H}(\tau,r)(t-s) (r-t)^{2H-1}\dr \\
        &\lesssim (t-s) \left((\tau-t)^{2H-1} + \int_t^\tau (\tau-r)^{2H-1} (r-t)^{2H-1}\dr \right)\\ 
        & \lesssim (t-s) (\tau-t)^{2H-1}.
    \end{align*}
    The last line follows from the identity $\int_t^\tau (\tau-r)^{2H-1} (r-t)^{2H-1}\dr = (\tau-t)^{4H-1} {\rm B}(2H,2H)$. 
    
    \textbf{Proof of~\eqref{eq:est_reg_Xtildets}.} For $\sft\le s\le t<\tau$, similar arguments yield
    \begin{align*}
        \EE\big\lvert \widetilde{\bX}^{t,\bx}_\tau-\widetilde{\bX}^{s,\bx}_\tau\big\lvert ^{p} 
        &\le \EE\abs{\int_s^t K_b(\tau,r)b(r,\bX^{\bx}_r)\ds + \int_s^t K_\sigma(\tau,r)\sigma(r,\bX^{\bx}_r)\D W_r}^p \\
        &\lesssim \left(\int_{s}^t K_H(\tau,r)^2\ds\right)^{p/2} \le (t-s)^{p/2} (\tau-t)^{(2H-1)p/2},
    \end{align*}
    thereby concluding the proof.
\end{proof}

\underline{\emph{Step 2: The fractional derivative.}} In this step we will prove the following lemma which may be of independent interest. Recall that~$M_t=\EE[\varphi(\bX_T^{\bx})\lvert\Ff_t]$ for some~$\varphi\in\Cc^\beta_{{\rm poly}}(\RR^d)$ and $\mathscr{M}^\alpha_t = \int_t^T (s-t)^{-\alpha}\D M_s$.
\begin{lemma}\label{lemma:DalphaM_is_Malpha}
    Let Assumptions~\ref{as:kernel}  and~\ref{as:coefs} hold. Then $\mathscr{D}^\alpha_{T^-}(M_T-M_\cdot)(\sft) = \mathscr{M}^\alpha_{\sft}/\Gamma(1-\alpha)$ and belongs to $L^2(\Omega)$ for all~$\alpha<\beta/2$.
\end{lemma}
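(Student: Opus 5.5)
The plan is to control the temporal regularity of $M$ near $\sft$ and then identify the fractional derivative through a pathwise representation. By the flow property, $M_t=\EE[\varphi(\bX^{t,\widetilde{\bX}^{t,\bx}}_T)\lvert\Ff_t]=u(t,\widetilde{\bX}^{t,\bx})$ with $u(t,\bm{y}):=\EE[\varphi(\bX^{t,\bm{y}}_T)]$. The latter identity holds because $\widetilde{\bX}^{t,\bx}$ is $\Ff_t$-measurable and $\bX^{t,\bm{y}}$ depends only on the increments of $W$ after $t$. For $s<t$ I split
\begin{align*}
M_t-M_s = \big(u(t,\widetilde{\bX}^{t,\bx})-u(t,\widetilde{\bX}^{s,\bx})\big) + \big(u(t,\widetilde{\bX}^{s,\bx})-u(s,\widetilde{\bX}^{s,\bx})\big).
\end{align*}
The first term is handled by $\beta$-H\"older continuity of $\varphi$ (with polynomial growth, via H\"older's inequality and moment bounds), by \eqref{eq:est_reg_XxXy} and by \eqref{eq:est_reg_Xtildets}. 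These give $\EE\abs{\cdot}^2\lesssim (t-s)^{\beta}\big((T-t)^{(H-1/2)2\beta}+\int_t^T(T-r)^{2H-1}(r-t)^{(H-1/2)p}\dr\big)^{\cdots}$. For the second term, note that $\bX^{s,\bm{y}}=\bX^{t,\widetilde{\bX}}$ for the restarted curve, so by \eqref{eq:est_reg_XtXsft} it is bounded by $(t-s)^{\beta/2}(T-t)^{\beta(H-1/2)}$. Altogether I aim for
\begin{align*}
\EE\abs{M_t-M_s}^2\lesssim (t-s)^{\beta}(T-t)^{\beta(2H-1)},
\end{align*}
i.e.\ a regularity of order $\beta/2$ away from $T$. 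This is the estimate \eqref{eq:est_Mt_reg} alluded to in the remark.

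Next, I convert this into the $L^2$ bound for $\mathscr{M}^\alpha_{\sft}=\int_{\sft}^T(r-\sft)^{-\alpha}\D M_r$. I would write $\EE\abs{\mathscr{M}^\alpha_{\sft}}^2=\EE\int (r-\sft)^{-2\alpha}\D\langle M\rangle_r$ and integrate by parts in $r$ against the increasing function $F(r):=\EE\langle M\rangle_r-\EE\langle M\rangle_{\sft}=\EE\abs{M_r-M_{\sft}}^2\lesssim (r-\sft)^{\beta}(T-r)^{\beta(2H-1)}$. The boundary term at $\sft$ vanishes since $\beta>2\alpha$. The integral becomes $2\alpha\int (r-\sft)^{-2\alpha-1}F(r)\dr$, which is finite near $\sft$ for $\alpha<\beta/2$. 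Near $T$, the singular factor $(T-r)^{\beta(2H-1)}$ is the delicate part. There I would avoid the singularity by splitting at the midpoint: on $[(\sft+T)/2,T]$ the weight $(r-\sft)^{-2\alpha}$ is bounded, so that piece is at most $C\,\EE\langle M\rangle_T\le C\EE\abs{\varphi(\bX_T)}^2<\infty$. The integration by parts is therefore only needed on $[\sft,(\sft+T)/2]$, where $T-r$ is bounded below.

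Finally, for the identification $\mathscr{D}^\alpha_{T^-}(M_T-M_\cdot)(\sft)=\mathscr{M}^\alpha_{\sft}/\Gamma(1-\alpha)$, I repeat the stochastic Fubini computation \eqref{eq:Malpha_eq_DalphaM}, which only uses that $M$ is a square-integrable martingale and that $\mathscr{M}^\alpha_t\in L^2$ for $t$ near $\sft$. This yields $\scrI^\alpha_{T^-}\mathscr{M}^\alpha=\Gamma(1-\alpha)(M_T-M_\cdot)$, and applying $\scrD^\alpha_{T^-}$ gives the identity at $\sft$. Evaluating the derivative pointwise at $\sft$ requires care. I would justify it through the Marchaud-type representation stated in B), which converges in $L^2$ thanks to $\EE\abs{M_t-M_{\sft}}^2\lesssim (t-\sft)^\beta$ and $\alpha<\beta/2$. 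I would then check that it agrees with $\mathscr{M}^\alpha_{\sft}/\Gamma(1-\alpha)$ by an It\^o integration by parts of $(r-\sft)^{-\alpha}$ against $M$ on $[\sft+\ep,T]$ and letting $\ep\to0$.

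The main obstacle is the first step: obtaining the $\beta/2$ regularity of $M$ uniformly near $\sft$. This requires the flow decomposition and the correct handling of the non-Lipschitz $\varphi$ with polynomial growth through H\"older's inequality.
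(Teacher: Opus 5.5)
Your proposal is correct and follows essentially the paper's route: the flow representation $M_t=m_t(\widetilde{\bX}^{t,\bx})$, the same two-term split controlled by Lemma~\ref{lemma:est_X}, the layer-cake (equivalently Stieltjes) conversion of $\EE\int(r-\sft)^{-2\alpha}\D\langle M\rangle_r$ into $2\alpha\int(r-\sft)^{-2\alpha-1}\EE\abs{M_r-M_{\sft}}^2\dr$, and the identification through \eqref{eq:Malpha_eq_DalphaM}. The differences are minor. Your midpoint split near $T$ replaces the paper's direct use of the integrability of $(T-t)^{\beta(2H-1)}$, which holds anyway since $\beta(2H-1)>-1$. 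Your It\^o integration by parts on $[\sft+\ep,T]$ justifies the pointwise evaluation at $\sft$ more carefully than the paper's one-line appeal to \eqref{eq:Malpha_eq_DalphaM}.
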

\begin{proof}
Let us define the family of measurable maps~$m_t:\Cc([t,T];\RR^d)\to\RR$ as~$m_t(\bx):=\EE[\varphi(\bX^{t,\bx}_T)]$, $t\in\Tt$, and we claim that~$M_t=m_t(\widetilde{\bX}^{t,\bx})$. Indeed we have
\begin{align*}
    m_t(\widetilde{\bX}^{t,\bx}) = \EE\big[\varphi\big(\bX_T^{t,\zeta}\big)\big]\lvert_{\zeta=\widetilde{\bX}^{t,\bx}}
= \EE\Big[\varphi\Big(\bX_T^{t,\widetilde{\bX}^{t,\bx}}\Big) \Big\lvert \widetilde{\bX}^{t,\bx}\Big]
= \EE\Big[\varphi\Big(\bX_T^{t,\widetilde{\bX}^{t,\bx}}\Big) \Big\lvert \Ff_t\Big]
=\EE\big[\varphi\big(\bX^{\bx}_T\big)\big\lvert \Ff_t\big],
\end{align*}
where we used the flow property~$\bX^{\bx}=\bX^{t,\widetilde{\bX}^{t,\bx}}$ (almost surely).

Defining the (non-negative) measure $\mu((s,t]):=\EE[\langle M\rangle_t - \langle M\rangle_s]$ we have by the layer-cake representation

\begin{align}\label{eq:rpz_Malpha}
        \EE\abs{\mathscr{M}^\alpha_{\sft}}^2 
        &= \EE\int_{\sft}^T (s-\sft)^{-2\alpha} \D \langle M\rangle_s 
        = \int_\sft^T (s-\sft)^{-2\alpha}\mu(\D s)\\ 
        &= \int_0^\infty \mu\Big(\Big\{ s\in(\sft,T] \,\lvert\, (s-\sft)^{-2\alpha}>x\Big\}\Big) \D x 
        =\int_0^\infty \mu\big((\sft,(\sft+x^{-\frac{1}{2\alpha}})\wedge T) \big)\D x \nonumber\\
        &=2\alpha\int_{\sft}^\infty \mu\big((\sft,t\wedge T]\big) (t-\sft)^{-2\alpha-1}\D t 
        \nonumber\\
        &= 2\alpha \int_{\sft}^T \EE\Big[\langle M\rangle_{t}-\langle M\rangle_{\sft}\Big] (t-\sft)^{-2\alpha-1}\D t
        +2\alpha \EE \Big[\langle M\rangle_{T}-\langle M\rangle_{\sft}\Big] \int_{T}^\infty(t-\sft)^{-2\alpha-1}\D t .\nonumber
\end{align}
Since $M$ is a square-integrable martingale we further have~$\EE[\langle M\rangle_t-\langle M\rangle_\sft] = \EE[M_t^2-M_{\sft}^2]=\EE\abs{M_t-M_{\sft}}^2 $. Hence we need to study the regularity of the process~$M_t=m_t(\widetilde{\bX}^{t,\bx})$ around~$\sft$. For $t<T$, $\bx,\bm{y}\in \Cc([t,T];\RR^d)$, the $\beta$-H\"older regularity of $\varphi$ combined with~\eqref{eq:est_reg_XxXy} and Jensen's inequality yields
\begin{align*}
        \abs{m_t(\bx)-m_t(\bm{y})} \lesssim \EE\abs{\bX^{t,\bx}_T-\bX^{t,\bm{y}}_T}^\beta 
        \lesssim \left(\abs{\bx(T)-\bm{y}(T)}^2 + \int_t^T (T-s)^{2H-1} \abs{\bx(s)-\bm{y}(s)}^2\ds\right)^{\beta/2}.
\end{align*}
In virtue of~\eqref{eq:est_reg_Xtildets} and Jensen's inequality, this entails
\begin{align}
        \EE\big\lvert m_t(\widetilde{\bX}^{t,\bx})-m_t(\bx)\big\lvert^2 
        &\lesssim \EE\big\lvert\widetilde{\bX}^{t,\bx}_T-\bx(T)\big\lvert^{2\beta} + \left(\int_t^T (T-s)^{2H-1}\EE \big\lvert\widetilde{\bX}^{t,\bx}_s-\bx(s)\big\lvert^2 \ds\right)^\beta \nonumber\\
        &\lesssim (t-\sft)^{\beta} (T-t)^{\beta(2H-1)} + \left(\int_t^T (T-s)^{2H-1} (t-\sft) (s-t)^{2H-1}\ds \right)^\beta \nonumber\\
        &\lesssim (t-\sft)^\beta (T-t)^{\beta(2H-1)}.
        \label{eq:est_mtXtilde}
\end{align}
Similarly, we have by~\eqref{eq:est_reg_XtXsft}    \begin{align}\label{eq:est_mtsftx}
        \abs{m_t(\bx)-m_\sft(\bx)} \lesssim \EE\abs{\bX^{t,\bx}_T-\bX^{\sft,\bx}_T}^\beta \lesssim (t-\sft)^{\beta/2} (T-t)^{\beta(H-1/2)}.
\end{align}
Therefore, combining \eqref{eq:est_mtXtilde} and~\eqref{eq:est_mtsftx} we derive that
\begin{align}
        \EE\abs{M_t-M_{\sft}}^2
        = \EE\big\lvert m_t(\widetilde{\bX}^{t,\bx})-m_{\sft}(\bx)\big\lvert ^2
        &\lesssim  \EE\big\lvert  m_t(\widetilde{\bX}^{t,\bx})-m_t(\bx)\big\lvert ^2 + \EE\abs{m_t(\bx)-m_{\sft}(\bx)}^2 \nonumber\\
        &\lesssim (t-\sft)^\beta (T-t)^{\beta(2H-1)}. 
        \label{eq:est_Mt_reg}
\end{align}
For the case~$t=T$ we simply notice that, thanks to the H\"older regularity of~$\bX^{\bx}$,
\begin{align}\label{eq:est_MT_reg}
    \EE\abs{M_T-M_{\sft}}^2 \le 2\EE\big\lvert\varphi(\bX^{\bx}_T)-\varphi(\bx(T))\big\lvert^2 +2 \EE\big\lvert\varphi(\bx(T))-\EE[\varphi(\bX^{\bx}_T)]\big\lvert^2 \lesssim (T-\sft)^{2\beta H}.
\end{align}
We thus reach the following conclusion in virtue of~\eqref{eq:rpz_Malpha}, \eqref{eq:est_Mt_reg}  and~\eqref{eq:est_MT_reg}
\begin{equation}\label{eq:powerlaw_estimate_Malpha}
\begin{aligned}
        \EE\abs{\mathscr{M}^\alpha_{\sft}}^2 
        &= 2\alpha \left(\int_{\sft}^T \EE\abs{M_t-M_{\sft}}^2 (t-\sft)^{-2\alpha-1}\D t
        + \EE\abs{M_T-M_{\sft}}^2 \int_T^\infty(t-\sft)^{-2\alpha-1}\D t\right)\\
        &\lesssim \int_{\sft}^T (T-t)^{\beta(2H-1)} (t-\sft)^{\beta-2\alpha-1}\D t + (T-\sft)^{2\beta H} (T-\sft)^{-2\alpha}
        \lesssim (T-\sft)^{2\beta H-2\alpha}
\end{aligned}
\end{equation}
where we used~$\beta\in(2\alpha,1]$. From~\eqref{eq:Malpha_eq_DalphaM} we obtain~$\mathscr{M}^\alpha_{\sft}=\scrD^\alpha_{T^-}(M)(\sft)$ which concludes the proof.
\end{proof}
Since~$\EE\abs{\mathscr{M}^\alpha_{\sft}}^2= \EE\left[\int_{\sft}^T (t-\sft)^{-2\alpha}\D \langle M\rangle_s \right]$, condition~\eqref{eq:fractional_condition} is met, we thus deduce from Theorem~\ref{thm:frac_IBP} that~$\scrD^\alpha_{T^-}(M)(\sft)\in L^2(\Omega)$ and, if $\varphi\in \Cc^1_{b,{\rm Lip}}(\RR^d)$, the fractional IBP holds.

\medskip

\underline{\emph{Step 3: H\"older continuous test functions}.} To conclude, we free ourselves from the restriction that~$\varphi$ is differentiable. Consider~$\varphi\in\Cc^\beta_{{\rm poly}}(\RR^d)$ and a sequence of functions~$(\varphi_n)_{n\in\NN}$ in~$\Cc^\beta_{{\rm poly}}(\RR^d)\cap\Cc^{1}_{b,{\rm Lip}}(\RR^d)$ approximating~$\varphi$ uniformly over compact subsets of~$\RR^d$ (which is possible thanks to the local version of the Stone--Weierestrass theorem). Analogously to the previous step, we define
\begin{align*}
        \Phi_n(\bx):=\EE[\varphi_n(\bX^{\bx}_T)],
        \quad M^n_t := \EE[\varphi_n(\bX^{\bx}_T)\lvert\Ff_t],\quad 
        m_t^n(\bx) := \EE[\varphi_n(\bX^{t,\bx}_T)].
\end{align*}
Hence the fractional IBP~\eqref{eq:frac_IBP_formula} holds for $D\Phi_n(\bx)(h)$ where~$h\in\Hh_\alpha$. We also introduce
\begin{align*}
        \Psi_\alpha(\bx;h) := \EE\left[\scrD^\alpha_{T^-}\big(M_T-M_{\cdot}\big)(\sft)\int_{\sft}^T (s-\sft)^\alpha \Big\langle\xi(s,\bX^{\bx}_s)h^*(s),\D W_s \Big\rangle\right].
\end{align*}
We will follow the structure of Step 3 of the proof of Theorem~\ref{th:main_IBP}. Noting that, by Cauchy--Schwarz inequality,
\begin{align*}
        \abs{D\Phi_n(\bx)(h)-\Psi_\alpha(\bx;h)}^2 \le \EE\abs{\scrD_{T^-}^\alpha\big(M^n_T-M^n_\cdot\big)(\sft)-\scrD_{T^-}^\alpha\big(M_T-M_\cdot\big)(\sft) }^2 \norm{\xi}^2_\infty \norm{h}^2_{\Hh_\alpha}, 
\end{align*}
we start by establishing the following estimate.
\begin{lemma}\label{lemma:est_Dalpha}
    Let $f\in\Cc^\beta_{{\rm poly}}(\RR^d)$, $\beta\in(0,1)$, and $M^f_t:=\EE[f(\bX^{\bx}_T)\lvert\Ff_t]$. Then for any $\alpha\in(0,\beta/2)$ there exist~$\delta\in(0,1/2)$ and~$p\in(1,2)$ such that
    \begin{align}
            \EE\abs{\scrD^\alpha_{T^-}\big(M^f_T-M^f_\cdot\big)(\sft)}^p \lesssim \Big(\EE\abs{f(\bX^{\bx}_T)}^2\Big)^{\delta p /2}.
    \end{align}
\end{lemma}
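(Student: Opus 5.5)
Our plan is to interpolate between two estimates for the same quantity: the Hölder-based bound of Lemma~\ref{lemma:DalphaM_is_Malpha}, which costs nothing in integrability but uses the $\beta$-regularity of $f$, and a crude bound on the increments of $M^f$ by $\EE\abs{f(\bX^{\bx}_T)}^2$, which uses no regularity at all. Write $\mathscr{M}^{\alpha,f}_{\sft}=\int_{\sft}^T(s-\sft)^{-\alpha}\D M^f_s$, so that $\Gamma(1-\alpha)\scrD^\alpha_{T^-}(M^f_T-M^f_\cdot)(\sft)=\mathscr{M}^{\alpha,f}_{\sft}$ by Lemma~\ref{lemma:DalphaM_is_Malpha}. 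Fix $\alpha'\in(\alpha,\beta/2)$ and set $\delta:=1-\alpha/\alpha'\in(0,1)$; if needed, shrink $\alpha'$ and pick $\alpha$'s interpolating partner so that $\delta<1/2$.

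\textbf{Key steps.} First, use the layer-cake representation~\eqref{eq:rpz_Malpha}, which expresses $\EE\abs{\mathscr{M}^{\alpha,f}_{\sft}}^2$ through $g(t):=\EE\abs{M^f_t-M^f_{\sft}}^2$, namely
\[
\EE\abs{\mathscr{M}^{\alpha,f}_{\sft}}^2 \simeq \int_{\sft}^T g(t)(t-\sft)^{-2\alpha-1}\dt + g(T)(T-\sft)^{-2\alpha}.
\]
Second, bound $g$ in two ways: by~\eqref{eq:est_Mt_reg}--\eqref{eq:est_MT_reg}, $g(t)\lesssim (t-\sft)^{\beta}(T-t)^{\beta(2H-1)}$ (this holds for any $\beta$-Hölder $f$ with polynomial growth, the constants depending on the Hölder seminorm); trivially, by the martingale property, $g(t)\le \EE\abs{M^f_T}^2=\EE\abs{f(\bX^{\bx}_T)}^2=:A$. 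Hence $g(t)\le \min\{A,\,C(t-\sft)^\beta(T-t)^{\beta(2H-1)}\}\le A^{\delta}\big(C(t-\sft)^{\beta}(T-t)^{\beta(2H-1)}\big)^{1-\delta}$. Plugging in, the integral converges provided $\beta(1-\delta)>2\alpha$ and $\beta(1-\delta)(1-2H)<1$, both achievable for $\delta$ small since $\alpha<\beta/2$. This already gives $\EE\abs{\mathscr{M}^{\alpha,f}_{\sft}}^2\lesssim A^\delta$, and then Jensen gives $\EE\abs{\cdot}^p\le (\EE\abs{\cdot}^2)^{p/2}\lesssim A^{\delta p/2}$ for any $p\in(1,2)$, which is the claim.

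\textbf{Main obstacle.} The delicate point is the endpoint singularity at $T$: the factor $(T-t)^{\beta(2H-1)(1-\delta)}$ is integrable only when $\beta(1-2H)(1-\delta)<1$, fine since $\beta<1$; and the boundary term $g(T)(T-\sft)^{-2\alpha}$ is harmless. If the constants in~\eqref{eq:est_Mt_reg} turned out to depend on $f$ beyond its Hölder norm (e.g.\ through its growth), I would instead apply the argument to $f$ with the constant absorbed into $\lesssim$, which is consistent with the statement since the implicit constant may depend on $f$'s $\Cc^\beta_{\rm poly}$ norm; the exponent $p<2$ is then only needed if one prefers to interpolate via Hölder's inequality in $\Omega$ rather than pointwise in $t$.
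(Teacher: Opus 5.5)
Your argument is correct, and it follows a different route from the paper's.

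\textbf{The paper's route.} The paper first establishes the Marchaud-type representation \eqref{eq:rpz_Dalpha}, using Theorem~13.1 of Samko et al.\ after checking $\scrM^{f,\alpha}\in L^1(\Tt)$. It then applies Jensen's inequality in time, $(\int_{\sft}^T\abs{\cdot}\dt)^p\le (T-\sft)^{p-1}\int_{\sft}^T\abs{\cdot}^p\dt$. Finally it splits $(\EE\abs{M^f_t-M^f_{\sft}}^2)^{p/2}$ into a $\delta$-part, bounded by $\EE\abs{f(\bX^{\bx}_T)}^2$, and a $(1-\delta)$-part, bounded by the H\"older estimate. This discards the orthogonality of martingale increments, so the weight $(t-\sft)^{-p(1+\alpha)}$ forces $p<1/(1+\alpha-(1-\delta)\beta/2)<2$. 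That is why $p$ has to be close to $1$.

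\textbf{Your route and what it buys.} You stay at the $L^2$ level. The It\^o isometry and the layer-cake identity \eqref{eq:rpz_Malpha} convert the singular weight into $(t-\sft)^{-2\alpha-1}$ against $g(t)=\EE\abs{M^f_t-M^f_{\sft}}^2$. The same pointwise interpolation then gives $\EE\abs{\scrM^{f,\alpha}_{\sft}}^2\lesssim (\EE\abs{f(\bX^{\bx}_T)}^2)^{\delta}$, and any $p\in(1,2)$, or even $p=2$, follows by Jensen. This is shorter and reuses \eqref{eq:powerlaw_estimate_Malpha} almost verbatim. It also yields a stronger $L^2$ bound, which would let the later approximation step use Cauchy--Schwarz instead of H\"older plus BDG with exponent $p/(p-1)>2$.

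\textbf{What your route relies on.} You depend on the identification $\Gamma(1-\alpha)\scrD^\alpha_{T^-}(M^f_T-M^f_\cdot)(\sft)=\scrM^{f,\alpha}_{\sft}$ from Lemma~\ref{lemma:DalphaM_is_Malpha}, rather than on the pathwise Marchaud formula. The two agree almost surely. To see this, integrate $(s-\sft)^{-\alpha}$ by parts against $M^f-M^f_{\sft}$ on $[\sft+\ep,T]$. The boundary term $\ep^{-\alpha}(M^f_{\sft+\ep}-M^f_{\sft})$ vanishes in $L^2$ because $\EE\abs{M^f_{\sft+\ep}-M^f_{\sft}}^2\lesssim \ep^{\beta}$ and $\beta>2\alpha$.

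\textbf{One precision.} The boundary term of the layer-cake formula is only ``harmless'' if you interpolate it as well: $g(T)\le\min\{A,\,C(T-\sft)^{2\beta H}\}\le A^{\delta}\big(C(T-\sft)^{2\beta H}\big)^{1-\delta}$. Bounding it merely by $A$ leaves a term of order $A$, which is dominated by $A^\delta$ only when $A$ is bounded. The paper's own estimate keeps an analogous $A^{p/2}$ term, so your version is cleaner on this point once this is spelled out. Finally, the auxiliary $\alpha'$ is unnecessary: any $\delta<\min\{1/2,\,1-2\alpha/\beta\}$ works.
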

\begin{proof}
We leverage for this proof the results found in~\cite{samko1993fractional}. They are proved there for the left Riemann--Liouville integrals and derivatives and extend in a straightforward way to the right version used in this paper. We know from~\eqref{eq:Malpha_eq_DalphaM} that~$f(\bX^{\bx}_T)-M^f =\scrI^\alpha_{T^-}(\scrM^{f,\alpha}) / \Gamma(1-\alpha)$ where~$\scrM^{f,\alpha}_t:=\int_t^T (s-\sft)^{-\alpha} \D M^f_s$. Theorem 13.1 of \cite{samko1993fractional} states that if~$\scrM^{f,\alpha} \in L^1(\Tt)$ then we have the following representation, stated in Equation (13.5) of the same monograph:
\begin{align}\label{eq:rpz_Dalpha}
        \scrD^\alpha_{T^-}(M^f_T-M^f_\cdot)(\sft) = \frac{M^f_T-M^f_{\sft}}{\Gamma(1-\alpha)(T-\sft)^\alpha} + \frac{\alpha}{\Gamma(1-\alpha)} \int_{\sft}^T \frac{M^f_t-M^f_{\sft}}{(t-\sft)^{1+\alpha}}\dt.
\end{align}
In a very similar fashion to~\eqref{eq:powerlaw_estimate_Malpha}, we derive the following estimate
\begin{align*}
    \EE\left[\int_{\sft}^T \big\lvert\scrM^{f,\alpha}_t\big\lvert^2 \dt\right]
    &\lesssim \int_{\sft}^T (T-t)^{2(\beta H - \alpha)} \dt <\infty,
\end{align*}
since~$\alpha < \beta/2 <1/2$. 
In the last computation we used, for all $s\ge t$,
\begin{align*}
        \EE\big\lvert M^f_t-M^f_s\big\lvert^2
        &=\EE\big\lvert m_t^f(\widetilde{\bX}^{t,\bx}) -m_s^f(\widetilde{\bX}^{s,\bx}) \big\lvert^2\\
        &\lesssim \EE\big\lvert m^f_t(\widetilde{\bX}^{t,\bx}) - m^f_t(\widetilde{\bX}^{s,\bx})\big\lvert^2 + \EE\big\lvert m^f_t(\widetilde{\bX}^{s,\bx}) - m^f_s(\widetilde{\bX}^{s,\bx})\big\lvert^2\\
        &\lesssim \EE\big\lvert\widetilde{\bX}^{t,\bx}_T - \widetilde{\bX}^{s,\bx}_T\big\lvert^{2\beta} + \EE\big\lvert \widetilde{\bX}^{t,\bx}_T - \widetilde{\bX}^{s,\bx}_T\big\lvert^{2\beta} + \left(\int_t^T (T-r)^{2H-1} \EE\big\lvert \widetilde{\bX}^{t,\bx}_r - \widetilde{\bX}^{s,\bx}_r\big\lvert^{2}\dr\right)^{\beta/2} \\
        &\lesssim (s-t)^{\beta}(T-s)^{\beta(2H-1)},
\end{align*}
by Lemma~\ref{lemma:est_X} and where the computations are similar to~\eqref{eq:est_mtXtilde} and~\eqref{eq:est_mtsftx}. Since~$\int_{\sft}^T \lvert \scrM^\alpha_t \lvert \dt$ is finite in~$L^2(\Omega)$ it is also finite almost surely and hence the representation~\eqref{eq:rpz_Dalpha} holds almost surely.

    We are going to exploit the following estimates  $\EE\lvert M^f_{t}-M^f_{\sft}\lvert^2 \lesssim (t-\sft)^{\beta}(T-t)^{\beta(2H-1)}$ and $\EE\lvert M^f_{t}-M^f_{\sft}\lvert^2 \le 2\EE\lvert f(\bX^{\bx}_T)\lvert^2$.  Let us set~$\delta\in(0,1/2),\,p\in(1,2)$ for now and we will specify their values later. Two applications of Jensen's inequality to representation~\eqref{eq:rpz_Dalpha} yield
    \begin{align*}
        &\EE\abs{\scrD^\alpha_{T^-}\big(M^f_T-M^f_\cdot\big)(\sft)}^p \\
        &\lesssim 
        \frac{\Big(\EE \big\lvert M^f_{T}-M^f_{\sft}\big\lvert^2\Big)^{p/2}}{(T-\sft)^{p\alpha}} + (T-\sft)^{p-1} \int_{\sft}^T \frac{\Big(\EE \big\lvert M^f_{t}-M^f_{\sft}\big\lvert^2\Big)^{\delta p/2}\Big(\EE \big\lvert M^f_{t}-M^f_{\sft}\big\lvert^2\Big)^{(1-\delta)p/2}}{(t-\sft)^{p(1+\alpha)}}\dt \\
        &\lesssim \frac{\Big(2\EE\lvert f(\bX^{\bx}_T)\lvert^2\Big)^{p/2}}{(T-\sft)^{p\alpha}} +  \Big(\EE\lvert f(\bX^{\bx}_T)\lvert^2 \Big)^{\delta p /2} \int_{\sft}^T (t-\sft)^{p((1-\delta)\beta/2-1-\alpha)} (T-t)^{(1-\delta)p \beta (H-1/2)} \dt,
    \end{align*}
    and the integral is finite provided that~$p((1-\delta)\beta/2-1-\alpha)>-1$ and~$(1-\delta)p \beta (H-1/2)>-1$. The latter is guaranteed for all~$p<2$ since~$\beta(H-1/2)>-1/2$. Further set~$\delta \in(0, 1-2\alpha/\beta)$ such that~$(1-\delta)\beta/2-1-\alpha>-1$ and thus the former inequality is satisfied for any~$p\in(1, \frac{1}{1+\alpha-(1-\delta)\beta/2})$. The proof is complete.
\end{proof}
Moreover, for any~$f:\RR^d\to\RR$ with polynomial growth~$\abs{f(x)}\le C(1+\abs{x})^\kappa$ and any~$R>0,$ we have 
\begin{align}\label{eq:localisation_argument}
    \EE\Big[f(\bX^{\bx}_T)^2\one_{\abs{\bX^{\bx}_T}>R}\Big]
    \lesssim \left(\EE\left[ 1+ \abs{\bX^{\bx}_T}^{4\kappa}\right] \PP\big(1+\abs{\bX^{\bx}_T}^{4\kappa}>1+R^{4\kappa} \big) \right)^{1/2}
    \le \frac{\EE\left[ 1+ \abs{\bX^{\bx}_T}^{4\kappa}\right] }{1+R^{4\kappa}},
\end{align}
where we used Cauchy--Schwarz and Markov's inequalities. Further recall that~$\EE\abs{\bX^{\bx}_T}^{4\kappa}\lesssim \norm{\bx}^{4\kappa}_\infty$ such that~$\EE\Big[f(\bX^{\bx}_T)^2\one_{\abs{\bX^{\bx}_T}>R}\Big]$ tends to zero as~$R\nearrow+\infty$.  Let us pick the constants~$\delta\in(0,1/2)$ and~$p\in(1,2)$ obtained in Lemma~\ref{lemma:est_Dalpha}. Leveraging the latter, H\"older and BDG inequalities then yield 
\begin{align*}
        &\abs{D\Phi_n(\bx)(h)-\Psi_\alpha(\bx;h)} \\
        &\lesssim \Big(\EE\big\lvert \scrD^\alpha_{T^-}(M^{\varphi-\varphi_n}_T-M^{\varphi-\varphi_n}_\cdot)(\sft)\big\lvert^p \Big)^{1/p} \bigg(\EE\bigg\lvert \int_{\sft}^T (s-\sft)^{\alpha} \Big\langle\xi(s,\bX^{\bx}_s) h^*(s), \D W_s \Big\rangle \bigg\lvert^{\frac{p}{p-1}}\bigg)^{\frac{p-1}{p}}\\
        & \lesssim \left(\EE\abs{(\varphi-\varphi_n)(\bX^{\bx}_T)}^2\right)^{\delta/2}\norm{\xi}_{\infty} \left(\int_{\sft}^{T} (s-t)^{2\alpha} \abs{h^*(s)}^2\ds\right)^{1/2} 
\end{align*}
Recall that~$\norm{h}_{\Hh_\alpha}^2 = \int_{\sft}^{T} (s-t)^{2\alpha} \abs{h^*(s)}^2\ds$ and therefore, by the localisation argument~\eqref{eq:localisation_argument}, for any~$\ep>0$ there is~$R>0$ such that
\begin{align*}
        \abs{D\Phi_n(\bx)(h)-\Psi_\alpha(\bx;h)} 
        &\lesssim \left(\EE\Big[\abs{(\varphi-\varphi_n)(\bX^{\bx}_T)}^2\one_{\abs{\bX^{\bx}_T}\le R}\Big]+ \EE\Big[\abs{(\varphi-\varphi_n)(\bX^{\bx}_T)}^2\one_{\abs{\bX^{\bx}_T}>R}\Big]\right)^{\delta/2}\norm{h}_{\Hh_\alpha} \\
        &\le \Big(\sup_{\abs{x}\le R} \abs{\varphi(x)-\varphi_n(x)}^2+ \ep\Big)^{\delta/2} \norm{h}_{\Hh_\alpha},
\end{align*}
which goes to zero as~$n\nearrow+\infty$.  
Similarly, for any~$m,n\in\NN$, Lemma~\ref{lemma:est_Dalpha} applied to~$\Phi_n-\Phi_m$ yields for some~$\delta\in(0,1/2)$, as in~\eqref{eq:est_PhinPhi_m}, 
\begin{align*}
    \abs{(\Phi_n-\Phi_m)(\bx+h) - (\Phi_n-\Phi_m)(\bx)} 
    &\le \sup_{\lambda\in[0,1]}\norm{\big(\varphi_n-\varphi_m\big)(\bX^{\bx+\lambda h}_T)}_{L^2(\Omega)}^{\delta} \norm{\xi}_{\infty} \norm{h^*}_{L^2_\alpha}\\
    &\lesssim \Big(\sup_{\abs{x}\le R} \abs{\varphi_n(x)-\varphi_m(x)}+ \ep\Big)^{\delta} \norm{h}_{\Hh_\alpha},
\end{align*}
where the constant does not depend on~$n,m$. Taking $m$ to~$+\infty$ we deduce that for~$n$ large enough, $\sup_{\abs{x}\le R} \abs{\varphi_n(x)-\varphi_m(x)}\le \ep $. For $n\ge n_1 \vee n_2$ and $\norm{h}_{\Hh_\alpha}\le1$ we obtain
\begin{align*}
    \frac{\lvert\Phi(\bx+h)-\Phi(\bx)-\Psi_\alpha(\bx;h)\lvert}{\norm{h}_{\Hh_\alpha}}
    &\le  \frac{\lvert(\Phi-\Phi_n)(\bx+h)-(\Phi-\Phi_n)(\bx)\lvert}{\norm{h}_{\Hh_\alpha}} \\
    &\qquad+  \frac{\lvert D\Phi_n(\bx)h-\Psi_\alpha(\bx;h)\lvert}{\norm{h}_{\Hh_\alpha}}\\ 
    &\qquad+  \frac{\lvert\Phi_n(\bx+h)-\Phi_n(\bx)-D\Phi_n(\bx)h\lvert}{\norm{h}_{\Hh_\alpha}} \\
    &\le 2(2\ep)^\delta +  \frac{\lvert\Phi_n(\bx+h)-\Phi_n(\bx)-D\Phi_n(\bx)h\lvert}{\norm{h}_{\Hh_\alpha}} ,
\end{align*}
and since $\Phi_n\in\Dd^1_{\Hh_\alpha}(\Xx)$ for all $n\in\NN$, this proves that~$D\Phi(\bx)(h)=\Psi_\alpha(\bx;h)$. Finally, Cauchy--Schwarz inequality and Lemma~\ref{lemma:est_Dalpha} yield
 \begin{align*}
     \abs{\Psi_\alpha(\bx;h)} \lesssim \norm{\varphi(\bX^{\bx}_T)}_{L^2(\Omega)}^{\delta} \norm{h^*}_{L^2_\alpha}\lesssim \big(1+\norm{\bx}_\infty^{\delta}\big) \norm{h}_{\Hh_\alpha},
 \end{align*}
 which confirms the Fréchet differentiability and thereby ends the proof.

\subsection{Proof of Theorem \ref{thm:frac_IBP} C)}\label{sec:proof_fIBP_C}
Similarly to Corollary \ref{coro:reg_Phi} we have
\begin{align*}
        \abs{\Phi(\bx+h)-\Phi(\bx)}
        =\Big\lvert \int_0^1 D\Phi(\bx+\lambda h)(h)\D \lambda \Big\lvert.
\end{align*}
With a mild abuse of notations, let us denote~$M_t = \EE[\varphi(\bX^{\bx+\lambda h}_T)\lvert\Ff_t]$ in this proof. 
In virtue of Lemma~\ref{lemma:est_Dalpha}, with the parameters $p\in(1,2)$ and $\delta\in(0,1)$ obtained there, it follows from H\"older's and BDG inequalities that
\begin{align*}
    \abs{D\Phi(\bx+\lambda h)(h)}
    &\lesssim \big(\EE\abs{\scrD^\alpha_{T^-}(M_T-M_\cdot)(\sft)}^p\big)^{1/p} \left( \int_{\sft}^T (s-\sft)^{2\alpha} \abs{h^*(s)}^2 \ds \right)^{1/2}\\
    &\lesssim \left(\EE\abs{\varphi(\bX^{\bx+\lambda h}_T)}^2\right)^{\delta/2} \norm{h^*}_{L^2_\alpha} \lesssim \norm{h}_{\Hh_\alpha},
\end{align*}
since~$\varphi$ has polynomial growth. This yields the claim.
 

\section{A second order integration by parts}\label{sec:IBP_2}
In contrast with the standard BEL formula, the first IBP~\eqref{eq:main_IBP} does not feature the tangent process~$\bY^h$. This allows us to derive an integration by parts for the second derivative in a close fashion. We derive it in dimension one for simplicity and write $\sigma'$ for the derivative with respect to~$x$ instead of $\nabla\sigma$.
\begin{proposition}\label{prop:IBP_2}
    Let Assumptions~\ref{as:kernel} and~\ref{as:coefs} hold, $d=m=1$, $\gamma\in(0,H)$ and~$\phi\in\Bb(\Cc^\gamma_\infty)$.
    Furthermore, assume that~$x\mapsto\sigma'(t,x)$ is Lipschitz continuous uniformly in~$t$.     
    Then the map $\bx\mapsto\Phi(\bx)=\EE[\phi(\bX^{\bx})]$ belongs to~$\Dd^2_{\Hh}(\Cc^\gamma_\infty)$ and for all~$\bx\in\Cc^\gamma,\,g,h\in\Hh$ the integration by parts holds
    \begin{equation}\label{eq:main_IBP2}
    \begin{aligned}     
        D^2 \Phi(\bx)(g,h)
        &=\EE\left[\phi(\bX^{\bx})\int_\sft^T \xi(s,\bX_s^{\bx}) h^*(s) \D W_s  \int_\sft^T \xi(s,\bX_s^{\bx}) g^*(s)\D W_s \right]\\
        &\quad-\EE\left[ \phi(\bX^{\bx}) \int_\sft^T  \xi^2(s,\bX_s^{\bx})h^*(s) g^*(s)\ds\right].
    \end{aligned}
    \end{equation}
\end{proposition}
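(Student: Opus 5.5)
Starting from the first-order formula of Theorem~\ref{th:main_IBP}, $D\Phi(\bx)(h)=\EE[\phi(\bX^{\bx})\pi^h(\bx)]$ with $\pi^h(\bx):=\int_\sft^T \xi(s,\bX^{\bx}_s)h^*(s)\D W_s$, I differentiate once more in the direction $g\in\Hh$. As in Theorem~\ref{th:main_IBP}, I first take $\phi\in\Cc^1_{b,\Cc^\gamma,{\rm Lip}}(\Cc^\gamma_\infty)$ and remove this restriction at the end by density (Lemma~\ref{lemma:density_phi}). The product rule gives
\begin{align*}
D^2\Phi(\bx)(g,h)=\EE\big[D\phi(\bX^{\bx})(\bY^g)\,\pi^h(\bx)\big]+\EE\big[\phi(\bX^{\bx})\,D\pi^h(\bx)(g)\big].
\end{align*}
To justify this, I show that $\bx\mapsto\pi^h(\bx)$ is Fréchet differentiable in $L^2(\Omega)$ along $\Hh$, with derivative
\begin{align*}
D\pi^h(\bx)(g)=\int_\sft^T \xi'(s,\bX^{\bx}_s)\bY^g_s h^*(s)\D W_s,\qquad \xi'=-\sigma'/\sigma^2 .
\end{align*}
This follows from Itô's isometry and Proposition~\ref{prop:Frechet}, provided $\xi'$ is bounded and Lipschitz. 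Boundedness of $\xi'$ uses Assumption~\ref{as:coefs}; the Lipschitz property is exactly where the extra hypothesis on $\sigma'$ enters.

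For the first term I use Lemma~\ref{lemma:equality_Y_DX}, which gives $D\phi(\bX^{\bx})(\bY^g)=\langle\Df\phi(\bX^{\bx}),\eta^g\rangle$ with $\eta^g_s=\xi(s,\bX^{\bx}_s)g^*(s)$. Set $F=\phi(\bX^{\bx})\pi^h$. Then
\begin{align*}
\langle\Df\phi(\bX^{\bx}),\eta^g\rangle\,\pi^h=\langle \Df F,\eta^g\rangle-\phi(\bX^{\bx})\langle\Df\pi^h,\eta^g\rangle,
\end{align*}
and Malliavin integration by parts turns the first piece into $\EE[\phi(\bX^{\bx})\pi^h\pi^g]$. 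This is the first term of \eqref{eq:main_IBP2}. To handle the second piece, I compute $\Df_r\pi^h=\xi(r,\bX^{\bx}_r)h^*(r)+\int_r^T\xi'(s,\bX^{\bx}_s)\Df_r\bX^{\bx}_s h^*(s)\D W_s$ (Lemma~\ref{lemma:MalliavinD}). Pairing with $\eta^g$ and using stochastic Fubini and Lemma~\ref{lemma:equality_Y_DX} again gives
\begin{align*}
\langle\Df\pi^h,\eta^g\rangle=\int_\sft^T\xi^2(r,\bX^{\bx}_r)h^*(r)g^*(r)\dr+\int_\sft^T\xi'(s,\bX^{\bx}_s)\bY^g_s h^*(s)\D W_s.
\end{align*}
The stochastic integral here equals $D\pi^h(\bx)(g)$, so it cancels exactly the second term of the product rule. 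What remains is precisely \eqref{eq:main_IBP2}.

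The main obstacle is making this rigorous rather than formal. I need $\pi^h\in\DD^{1,2}$ and $F\in\DD^{1,2}$ with enough integrability for the duality, which I get from moment bounds on $\Df_r\bX^{\bx}_s$ (Lemma~\ref{lemma:MalliavinD}). I also need the differentiation under the expectation in the product rule, which requires a Lipschitz-type remainder estimate for $\pi^h(\bx+g)-\pi^h(\bx)-D\pi^h(\bx)(g)$. To obtain it, I would reuse the $\bZ^g$ estimate from Proposition~\ref{prop:Frechet} combined with the Lipschitz continuity of $\xi'$.

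For the density step, I note that the right-hand side of \eqref{eq:main_IBP2} is bounded by $\norm{\phi(\bX^{\bx})}_{L^2(\Omega)}\norm{g}_{\Hh}\norm{h}_{\Hh}$ up to a constant, since $\pi^h\pi^g$ lies in $L^2(\Omega)$ via BDG. Applying it to $D\Phi_n(\cdot)(h)$, the same three-term $\ep$-argument as in \eqref{eq:frechet_density} yields $\Phi\in\Dd^2_{\Hh}$.
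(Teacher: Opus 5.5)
Your proposal is correct and is essentially the paper's proof with the roles of $g$ and $h$ exchanged. The paper differentiates the $g$-weight in direction $h$ by the product rule. Using Lemma~\ref{lemma:equality_Y_DX} and stochastic Fubini, it rewrites the Fréchet derivative of $\phi(\bX^{\bx})G^{\bx}$ as its Malliavin derivative paired with $\xi(\cdot,\bX^{\bx}_\cdot)h^*$, minus $\phi(\bX^{\bx})\int_\sft^T\xi^2(s,\bX^{\bx}_s)g^*(s)h^*(s)\ds$; this is exactly your cancellation of $D\pi^h(\bx)(g)$. It then applies Malliavin duality and concludes by density.

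Your version also dispenses with two superfluous features of the paper's argument. First, it needs no preliminary approximation of $g$ ensuring $\int_s^T K_H(r,s)^2\abs{g^*(r)}^2\dr<\infty$ for every $s$: this holds for a.e.\ $s$ automatically, which is enough for $\Df\pi^h\in L^2(\Omega\times\Tt)$. Second, it needs no $\Cc^2$ assumption on $\phi$, which sits awkwardly with the $\Cc^1$ density class of Lemma~\ref{lemma:density_phi}.
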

\begin{remark}
We recycle Remark \ref{rem:mainIBP} regarding the H\"older regularity condition.
\begin{enumerate}
    \item[i)] The first part of the proof (Steps 1 and 2) goes through with $\phi\in\Cc^2_{b,\Xx}(\Xx)$, entailing that $\Phi\in\Dd^2_{b,\Hh}(\Xx)$ and that the integration by parts formula holds for any~$\bx\in\Xx$.
    \item[ii)] It is also possible to prove~$\Phi\in \Dd^2_{\Hh}(\Xx)$ and to derive the IBP for all~$\bx\in\Xx$ under the condition that~$\phi(\bx)=\varphi(\bx(T))$ for some~$\varphi:\RR^d\to\RR$. The reason is that, since $\RR^d$ is locally compact, it is no longer necessary to appeal to the H\"older norm in Lemma~\ref{lemma:density_phi}.
\end{enumerate}
\end{remark}
The proof partially relies on the following estimate, which we prove at the end of the section.
\begin{lemma}\label{lemma:est_DeltahX}
    Let Assumptions~\ref{as:kernel} and~\ref{as:coefs} hold, and let~$\bx\in\Xx,h\in\Hh$. 
    The process $\Delta_h \bX^{\bx}=\bX^{\bx+h}-\bX^{\bx}$ is almost surely H\"older continuous and in particular~$\EE\norm{\bX^{\bx+h}-\bX^{\bx}}_{\infty}^2\lesssim \norm{h}^2_{\Hh}.$
\end{lemma}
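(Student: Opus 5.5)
The plan is to write the difference $\Delta_h\bX^{\bx}=\bX^{\bx+h}-\bX^{\bx}$ as the solution of a Volterra equation driven by $h$ and to close the estimate with the Volterra--Gr\"onwall inequality of Lemma~\ref{lemma:Grownall}. Subtracting the two equations~\eqref{eq:main_SVE} gives, for $t\in\Tt$,
\begin{align*}
\Delta_h\bX^{\bx}_t = h(t) + \int_\sft^t K_b(t,s)\big(b(s,\bX^{\bx+h}_s)-b(s,\bX^{\bx}_s)\big)\ds + \int_\sft^t K_\sigma(t,s)\big(\sigma(s,\bX^{\bx+h}_s)-\sigma(s,\bX^{\bx}_s)\big)\D W_s .
\end{align*}
For $p\ge2$, I will use Jensen and BDG, the Lipschitz bounds of Assumption~\ref{as:coefs} and Assumption~\ref{as:kernel}. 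This yields
\begin{align*}
f(t):=\EE\abs{\Delta_h\bX^{\bx}_t}^p \lesssim \abs{h(t)}^p + \int_\sft^t K_H(t,s)^2 f(s)\ds ,
\end{align*}
exactly as in the proof of~\eqref{eq:est_reg_XxXy}. Lemma~\ref{lemma:Grownall} with the resolvent $R_{C,H}\lesssim K_H^2$ then gives $\sup_t f(t)\lesssim \norm{h}_\infty^p$. By~\eqref{eq:HhinCH}, $\norm{h}_\infty\lesssim\norm{h}_{\Hh}$ since $h(\sft)=0$, so $\sup_t\EE\abs{\Delta_h\bX^{\bx}_t}^p\lesssim\norm{h}_{\Hh}^p$.

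The main point is that the pointwise bound has the supremum outside the expectation, while the claim needs $\EE\norm{\cdot}_\infty^2$. I will handle this through increments and Kolmogorov's continuity theorem, which also gives the asserted H\"older continuity. Write $\Delta_h\bX^{\bx}=h+\bm{V}$, where $\bm{V}$ collects the two integrals. For $\sft\le s<t\le T$, split $\bm{V}_t-\bm{V}_s$ into the kernel difference $K(t,r)-K(s,r)$ on $[\sft,s]$ and the piece on $[s,t]$. BDG, Assumption~\ref{as:kernel}, and the pointwise bound give
\begin{align*}
\EE\abs{\bm{V}_t-\bm{V}_s}^p \lesssim \Big(\int_\sft^s \abs{K_H(t,r)-K_H(s,r)}^2\dr + \int_s^t K_H(t,r)^2\dr\Big)^{p/2}\norm{h}_{\Hh}^p \lesssim (t-s)^{Hp}\norm{h}_{\Hh}^p .
\end{align*}
This is the same computation as~\eqref{eq:HhinCH} and as the verification of \textbf{(H4)}. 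Taking $p$ large, Kolmogorov's theorem gives a modification of $\bm{V}$ that is $\gamma$-H\"older for every $\gamma<H$, with $\EE\norm{\bm{V}}_\gamma^2\lesssim\norm{h}_{\Hh}^2$. Together with $h\in\Cc^H$, this shows that $\Delta_h\bX^{\bx}$ is H\"older continuous. Finally, since $\bm{V}_\sft=0$,
\begin{align*}
\EE\norm{\Delta_h\bX^{\bx}}_\infty^2\lesssim \norm{h}_\infty^2+\EE\norm{\bm{V}}_\gamma^2\lesssim\norm{h}_{\Hh}^2 .
\end{align*}

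The hardest step is making the Kolmogorov argument quantitative, so that the constant in front of $\norm{h}_{\Hh}$ is uniform. I expect this to work by homogeneity: the Garsia--Rodemich--Rumsey bound is linear in the moment constant $\norm{h}_{\Hh}^p$. Alternatively, one could cite~\cite[Theorem 3.3]{zhang2010stochastic} applied to the Lipschitz-coefficient equation satisfied by $\Delta_h\bX^{\bx}$.
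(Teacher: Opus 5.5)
Your proposal is correct and follows essentially the same route as the paper. Both arguments combine three ingredients: the pointwise Volterra--Gr\"onwall bound \eqref{eq:bound_DeltaX} together with $\norm{h}_\infty\lesssim\norm{h}_{\Hh}$; an increment estimate that splits into the kernel difference on $[\sft,s]$ and the piece on $[s,t]$ (via BDG, Jensen and Assumption~\ref{as:kernel}); and Kolmogorov's theorem with a bound linear in the moment constant, evaluated at $s=\sft$.

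The only difference is cosmetic. You apply Kolmogorov to $\Delta_h\bX^{\bx}-h$ and use $h\in\Cc^H$ separately, whereas the paper keeps $h(t)-h(s)$ inside the increment estimate via \eqref{eq:HhinCH}. Your explicit remark on the homogeneity of the Kolmogorov bound also makes precise a point the paper passes over; in particular, its final display should read $\norm{h}_{\Hh}^p$.
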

\begin{proof}[Proof of Proposition \ref{prop:IBP_2}]
\emph{\underline{Step 0: Notations.}} 
We assume for the moment that $\phi\in\Cc^2_{b,\Cc^\gamma,{\rm Lip}}(\Cc^\gamma)$ and $\int_s^T K_H(r,s)^2 \abs{g^*(r)}^2\dr <\infty$ for all $s\in[\sft,T]$. 
We define for convenience~$\psi(\bx,z):=\phi(\bx)z$ for any~$\bx\in\Xx$ and~$z\in\RR$.  Denoting~$G^{\bx}:=\int_\sft^T \xi(s,\bX_s^{\bx})g^*(s)\D W_s$, Theorem~\ref{th:main_IBP} yields~$
    D\Phi(\bx)(g) 
    = \EE[\psi(\bX^{\bx},G^{\bx})]$.
    
\emph{\underline{Step 1: Derivatives.}} In this step we compute the Malliavin derivative and Fréchet derivative of this integrand. Let $\bx\in\Cc^\gamma_\infty$ such that~$\bX^{\bx}$ has~$\gamma$-H\"older paths. 
For any~$s\in[\sft,T]$, by the chain rule the Malliavin derivative reads
\begin{equation}
\begin{aligned}
\label{eq:MalliavinD_psi}
    \Df_s \psi(\bX^{\bx},G^{\bx}) &= D\phi(\bX^{\bx})(\Df_s \bX^{\bx}) G^{\bx} + \phi(\bX^{\bx})\xi(s,\bX_s^{\bx})g^*(s)\\
    &\quad - \phi(\bX^{\bx})\int_s^T \xi^2(r,\bX_r^{\bx}) \sigma'(r,\bX_r^{\bx})\Df_s\bX_r^{\bx} g^*(r) \D W_r.
\end{aligned}
\end{equation}
Lemma \ref{lemma:MalliavinD} entails that~$\EE\abs{\Df_s X_r}^2\le C \abs{K_H(r,s)}^2$ for a constant $C>0$, hence the stochastic integral is well-defined thanks to the ellipticity assumption \ref{as:coefs}:
\begin{align*}
    \EE\left(\int_s^T \xi^2(r,\bX_r^{\bx}) \sigma'(\bX_r^{\bx})\Df_s\bX_r^{\bx} g^*(r)  \D W_r\right)^2
    \lesssim \norm{\xi}^2_{\infty} \norm{\sigma'}_\infty \int_s^T \abs{K_H(r,s)}^2 \abs{g^*(r)}^2 \dr.
\end{align*}

On the other hand, denoting~$\partial_{\bx}$ for the Fréchet derivative with respect to~$\bx$, the chain rule yields
\begin{equation}
\begin{aligned}
\label{eq:derx_psi}
     \partial_{\bx} \psi(\bX^{\bx},G^{\bx})(h) 
    &= D\phi(\bX^{\bx})(\bY^{h})G^{\bx} + \phi(\bX^{\bx}) \partial_{\bx} G^{\bx}(h)\\
    &= D\phi(\bX^{\bx})(\bY^{h})G^{\bx} - \phi(\bX^{\bx})\int_\sft^T \xi^2(r,\bX_r^{\bx}) \sigma'(r,\bX_r^{\bx})\bY^{h}_r g^*(r) \D W_r.
\end{aligned}
\end{equation}
We computed $\partial_{\bx} G^{\bx}(h)$ as a derivative in $L^p(\Omega;\Xx)$ similarly to Proposition~\ref{prop:Frechet}. 
Leveraging that $\xi,\sigma'$ are bounded, that $\sigma'(r,\cdot)$ is Lipschitz continuous and hence so is $\xi(r,\cdot)$, we obtain~$\partial_{\bx} G^{\bx}(h)=\int_{\sft}^T \partial_{\bx}\xi(s,\bX^{\bx}_s)g^*(s)\D W_s$. The chain rule yields~\eqref{eq:derx_psi}.

\emph{\underline{Step 2: The integration by parts.}} 
We plug the formula from Lemma~\ref{lemma:equality_Y_DX}
\begin{align*}
    \bY^{h}_r= \int_\sft^T \Df_s \bX_r^{\bx} \xi(s,\bX_s^{\bx})h^*(s)\ds
\end{align*}
into the representation~\eqref{eq:derx_psi} of~$\partial_{\bx} \psi(\bX^{\bx},G^{\bx})(h)$ in order to relate it with~$\Df_s \psi(\bX^{\bx},G^{\bx})$. Applying stochastic Fubini theorem \cite[Theorem 65]{protter2005stochastic} and comparing with~\eqref{eq:MalliavinD_psi} yields  
\begin{align*}
    \partial_{\bx} \psi(\bX^{\bx},G^{\bx})(h) 
    &= \int_\sft^T \bigg\{  D\phi(\bX^{\bx})\Big(\Df_s \bX^{\bx} \xi(s,\bX_s^{\bx})h^*(s)\Big) G^{\bx} \\
    &\quad -\phi(\bX^{\bx})\int_s^T \xi^2(r,\bX_r^{\bx}) \sigma'(\bX_r^{\bx}) \Big( \Df_s \bX_r^{\bx} \xi(s,\bX_s^{\bx})h^*(s)\Big) g^*(r) \D W_r \bigg\} \ds \\
    &= \int_\sft^T \Big\{\Df_s \psi(\bX^{\bx},G^{\bx}) - \phi(\bX^{\bx}) \xi(s,\bX_s^{\bx})g^*(s) \Big\}\xi(s,\bX_s^{\bx}) h^*(s)\ds.
\end{align*}
We conclude by taking the derivative inside the expectation by dominated convergence and applying the Malliavin integration by parts as follows
\begin{align*}
    & D^2\Phi(\bx) (g,h)
    =\EE\left[\partial_{\bx} \psi(\bX^{\bx},G^{\bx})(h) \right]\\
    &=  \EE\left[\int_\sft^T  \Df_s \psi(\bX^{\bx},G^{\bx}) \xi(s,\bX_s^{\bx}) h^*(s)\ds\right] 
    - \EE\left[\int_\sft^T \phi(\bX^{\bx})  \xi^2(s,\bX_s^{\bx})g^*(s) h^*(s)\ds\right] \\
    &= \EE\left[\psi(\bX^{\bx},G^{\bx})\int_\sft^T \xi(s,\bX^{\bx}_s) h^*(s) \D W_s
    - \phi(\bX^{\bx}) \int_\sft^T \xi^2(s,\bX^{\bx}_s)g^*(s)  h^*(s)\ds\right]\\
    &=: \Psi_2(\bx,g,h).
\end{align*}
Cauchy--Schwarz inequality entails~$\abs{\Psi_2(\bx,g,h)}\lesssim \norm{g}_{\Hh}\norm{h}_{\Hh}$.

\emph{\underline{Step 3: $g\in\Hh$.}} Let $g\in\Hh$ and a sequence $(g_n)_{n\in\NN}\subset\Hh$ such that~$\int_s^T K_H(r,s)^2 \abs{g_n^*(s)}^2\ds<\infty$ for all $s\in\Tt,\,n\in\NN$ and~$\norm{g^*-g_n^*}_{L^2(\Tt)}$ goes to zero  (e.g. $(g_n)_n$ is a bounded sequence). For each~$n\in\NN$, the previous step guarantees that the Fréchet derivative~$D^2\Phi(\bx)(g_n,h)$ exists which means
\begin{align*}
    \lim_{\norm{h}_{\Hh}\searrow0} \frac{\abs{D\Phi(\bx+h)(g_n)-D\Phi(\bx)(g_n)-\Psi_2(\bx;g_n,h)}}{\norm{h}_{\Hh}} = 0.
\end{align*}
Moreover, Cauchy--Schwarz inequality and the boundedness of~$\phi$ and~$\xi$ imply that, for any~$n\in\NN$,
\begin{align*}
    \abs{\Psi_2(\bx;g_n,h) - \Psi_2(\bx;g,h)}
    \lesssim \norm{h^*}_{L^2(\Tt)} \norm{g^*-g_n^*}_{L^2(\Tt)}.
\end{align*}
The Lipschitz continuity and boundedness of~$\phi$ and~$\xi(t,\cdot)$, again combined with Cauchy--Schwarz inequality, further entail
\begin{align*}
    \abs{D\Phi(\bx+h)(g-g_n) - D\Phi(\bx)(g-g_n)} 
    &\le \EE\left[ \abs{\phi(\bX^{\bx+h})-\phi(\bX^{\bx})} \Big\lvert\int_{\sft}^T \xi(s,\bX^{\bx+h}_s) (g^*-g_n^*)(s)\D W_s\Big\lvert\right]\\
    &\quad + \EE\left[ \abs{\phi(\bX^{\bx})} \Big\lvert\int_{\sft}^T \big(\xi(s,\bX^{\bx+h}_s)-\xi(s,\bX^{\bx}_s)\big) (g^*-g_n^*)(s)\D W_s\Big\lvert\right]\\
    &\lesssim \left(\EE\norm{\bX^{\bx+h}-\bX^{\bx}}_{\infty}^2 \right)^{\half} \norm{g^*-g_n^*}_{L^2(\Tt)}\\
    &\lesssim \norm{h}_{\Hh}  \norm{g^*-g_n^*}_{L^2(\Tt)},
\end{align*}
where we concluded thanks to Lemma~\ref{lemma:est_DeltahX}. Finally, we obtain that
\begin{align*}
    \frac{\abs{D\Phi(\bx+h)(g)-D\Phi(\bx)(g)-\Psi_2(\bx;g,h)}}{\norm{h}_{\Hh}}
    &\le \frac{\abs{D\Phi(\bx+h)(g-g_n) - D\Phi(\bx)(g-g_n)} }{\norm{h}_{\Hh}}\\
    &\quad + \frac{\abs{D\Phi(\bx+h)(g_n)-D\Phi(\bx)(g_n)-\Psi_2(\bx;g_n,h)}}{\norm{h}_{\Hh}}\\
    &\quad + \frac{\abs{\Psi_2(\bx;g_n,h) - \Psi_2(\bx;g,h)}}{\norm{h}_{\Hh}}\\
    &\lesssim \norm{g^*-g_n^*}_{L^2(\Tt)},
\end{align*}
confirming that~$D^2\Phi(\bx)(g,h) = \Psi_2(\bx;g,h)$.

\emph{\underline{Step 4: General test functions.}} Let us finally extend this result to~$\phi\in\Bb^\varrho(\Cc^\gamma_\infty)$. For that purpose, consider the sequence~$(\phi_n)_{n\in\NN}\subset\Cc^1_{b,\Cc^\gamma,{\rm Lip}}(\Cc^\gamma_\infty)$ constructed in Lemma~\ref{lemma:density_phi} and for all~$\bx\in\Cc^\gamma,h\in\Hh$, (re)define~$\Phi_n(\bx)(h)=\EE[\phi_n(\bX^{\bx})]$. Analogously to \eqref{eq:est_PhinPhi_m}, Taylor's formula and Cauchy--Schwarz inquality yield
\begin{align*}
    \abs{D(\Phi_n-\Phi_m)(\bx+h)(g)-D(\Phi_n-\Phi_m)(\bx)(g)} \le \sup_{\lambda\in[0,1]}\norm{(\phi_n-\phi_m)(\bX^{\bx+\lambda h})}_{L^2(\Omega)} \norm{\xi}_\infty^2 \norm{g}_{\Hh}\norm{h}_{\Hh},
\end{align*}
and, similarly to~\eqref{eq:cvg_IBP} it also holds
\begin{align*}
    \abs{D^2\Phi_n(\bx)(g,h) - \Psi_2(\bx;g,h)}\le \norm{\phi_n(\bX^{\bx})-\phi(\bX^{\bx})}_{L^2(\Omega)} \norm{\xi}_\infty^2 \norm{g}_{\Hh}\norm{h}_{\Hh}.
\end{align*}
Eventually this implies, by taking the limit as $m$ to $+\infty$ and for $n$ large enough,
\begin{align*}
    \frac{1}{\norm{h}_{\Hh}} \abs{D\Phi(\bx+h)(g) - D\Phi(\bx)(g) - \Psi_2(\bx;g,h)}
    &\le \frac{1}{\norm{h}_{\Hh}} \Big(\abs{D(\Phi-\Phi_n)(\bx+h)(g)-D(\Phi-\Phi_n)(\bx)(g)} \\
    &\quad + \abs{D^2\Phi_n(\bx)(g,h) - \Psi_2(\bx;g,h)}\\
    &\quad + \abs{D\Phi_n(\bx+h)(g) - D\Phi_n(\bx)(g) - D^2\Phi_n(\bx)(g,h)}\Big)
\end{align*}
tends to zero as~$\norm{h}_{\Hh}$ does, thereby proving that~$\Phi\in\Dd^2_{\Hh}(\Cc^\gamma_\infty)$. The boundedness follows from the Cauchy--Schwarz inequality.
\end{proof}


\begin{proof}[Proof of Lemma \ref{lemma:est_DeltahX}]
    Let $\sft\le s<t\le T$. Note that we have
    \begin{align*}
        \Delta_h  \bX^{\bx}_t-\Delta_h  \bX^{\bx}_s
        = h(t)-h(s) 
        &+  \int_{\sft}^s \big(K_b(t,r)-K_b(s,r)\big) \big( b(r,\bX^{\bx+h}_r) - b(r,\bX^{\bx}_r) \big) \dr\\
        &+ \int_s^t K_b(t,r) \big( b(r,\bX^{\bx+h}_r) - b(r,\bX^{\bx}_r) \big) \dr\\     
        &+ \int_{\sft}^s \big(K_\sigma(t,r)-K_\sigma(s,r)\big) \big( \sigma(r,\bX^{\bx+h}_r) - \sigma(r,\bX^{\bx}_r) \big) \D W_r \\
        &+ \int_s^t K_\sigma(t,r) \big( \sigma(r,\bX^{\bx+h}_r) - \sigma(r,\bX^{\bx}_r) \big) \D W_r .
    \end{align*}
    Leveraging standard computations similar to~\eqref{eq:comps_estimate_DeltahX} and~\eqref{eq:trick_Lp_bound}, involving Jensen and BDG inequalities as well as the Lipschitz continuity of~$b,\sigma$, we obtain for all~$p\ge2$
    \begin{align*}
        &\EE\abs{ \Delta_h  \bX^{\bx}_t-\Delta_h  \bX^{\bx}_s}^p \\
        &\lesssim \abs{h(t)-h(s)}^p 
        + \sup_{r\in\Tt} \EE\abs{\Delta_h  \bX^{\bx}_r}^p \left(\left(\int_0^s \big(K_H(t,r)-K_H(s,r)\big)^2 \dr\right)^{p/2} + \left(\int_s^t K_H(t,r)^2\dr\right)^{p/2}  \right)\\
        &\lesssim \norm{h}_{\Hh}^p (t-s)^{Hp} + \norm{h}_\infty^p(t-s)^{Hp},
    \end{align*}
    where we concluded with~\eqref{eq:HhinCH}, Assumption~\ref{as:kernel} and~\eqref{eq:bound_DeltaX} and we further note that~$\norm{h}_\infty\lesssim \norm{h}_{\Hh}$. Therefore Kolmogorov continuity theorem~\cite[Chapter 1, Theorem 2.1]{revuz2013continuous} yields 
    \begin{align*}
        \EE\left[\sup_{s\neq t}\frac{\abs{ \Delta_h  \bX^{\bx}_t-\Delta_h  \bX^{\bx}_s}^p}{\abs{t-s}^{p\gamma} }\right]\lesssim \norm{h}_{\Hh},
    \end{align*}
    for all~$\gamma\in(0,H)$,
    and this yields the claim by setting $s=0$.
\end{proof}


\section{Application to stochastic (rough) volatility models}\label{sec:SV}

In this section we modify slightly the underlying model to give a self-contained proof of an integration by parts formula for stochastic volatility models where the volatility factor satisfies an SVE. These include rough volatility models such as rough Bergomi and rough Heston (modulo non-trivial smoothness assumptions on the coefficients) and a  subclass of forward variance models. 

We are chiefly interested in differentiating an option price~$\EE[\phi(X^{\bv})\lvert\Ff_t]$ with respect to the initial variance curve $\bv$ (see \eqref{eq:roughvolmodel} below). Such functional derivatives arise in the dynamics of the option price (when applying a functional Itô formula for instance) and are central in the asymptotic expansion~\cite{bergomi2012stochastic} or the hedging formula~\cite{viens2019martingale}. As mentioned before, our formulae are not directly applicable to this context because they only hold when the direction lies in~$\Hh$ or $\Hh_\alpha$. Nevertheless, this derivative is a non-standard type of Greek which represents the sensitivity with respect to changes in the forward variance curve and, as such, in the variance swap volatility~$\hat\sigma_{VS}(\sft,T)=\sqrt{\frac{1}{T} \int_\sft^T \bv_s\ds}$ or in the VIX.  

For this purpose we introduce $\overline{W}$, $W$ and $\bm{W}:=(\overline{W},W)^\top$ as $\RR^d$, $\RR$ and~$\RR^{d+1}$-valued Brownian motions, respectively. Further define the~$\RR^{d}$-valued Brownian motion $B:=\bm{\rho} \bm{W}=\overline{\rho}\overline{W}+\rho W$, where the correlation matrix is~$\bm{\rho}:=(\overline{\rho},\rho)$ with~$\overline{\rho}\in\RR^{d\times d}$ and $\rho\in\RR^d$. 
We then consider the following model, a couple~$(X^{\bv},V^{\bv})$ where $X^{\bv}$ plays the role of the log-price ($X^{\bv}=\log(S^{\bv}_t/S_0)$ wher $S^{\bv}$ was defined in~\eqref{eq:SV_model_intro}) and $V^{\bv}$ of a stochastic factor solution to an SVE:
\begin{equation}\label{eq:roughvolmodel}
\begin{aligned}
    &X_t^{\bv}:=\int_\sft^t \zeta(s,V^{\bv}_s)\D W_s - \half \int_\sft^t \zeta^2(s,V^{\bv}_s) \ds, \\
    &V^{\bv}_s = \bv_s + \int_\sft^s K_b(s,r)b(r,V^{\bv}_r)\dr + \int_\sft^s K_\sigma(s,r) \sigma(r,V^{\bv}_r)\D B_r,
\end{aligned}
\end{equation}
where $\bv\in\Xx$, $b,\sigma$ and $K_b,K_\sigma$ are the same as in Equation~\eqref{eq:main_SVE} and satisfy Assumption~\ref{as:kernel} and~\ref{as:coefs}, respectively. Finally, $\zeta:\Tt\times\RR^d\to\RR$ is a measurable function. The main result of this section and its proof follow closely Theorem~\ref{th:main_IBP}. The novelty required to match tangent process and Malliavin derivative is to differentiate only with respect to the Brownian motion~$\overline{W}$. 
\begin{proposition}\label{prop:IBP_roughvol}
    Let Assumptions~\ref{as:kernel}  and~\ref{as:coefs} hold, $\zeta\in\Cc^1_b(\RR^d)$, $\gamma\in(0,H)$, $\phi\in \Bb^{\varrho}(\Cc^\gamma_\infty)$ and~$\overline{\rho}$ be invertible. Then the mapping $ \bv\mapsto\Phi(\bv):=\EE[\phi(X^{\bv})]$ belongs to~$\Dd^1_{\Hh}(\Cc^\gamma_\infty)$ and for all~$\bv\in\Cc^\gamma,\,h\in\Hh$, the Bismut--Elworthy--Li formula holds 
    \begin{equation}\label{eq:rvol_IBP}  
        D \Phi(\bv)(h)
        = \EE\left[\phi(X^{\bv})\int_\sft^T \Big\langle(\overline{\rho})^{-1} \xi(r,V_r^{\bv})h^*(r),\D \overline{W}_r\Big\rangle\right].
    \end{equation}
\end{proposition}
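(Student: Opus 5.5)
The plan is to mirror the proof of Theorem~\ref{th:main_IBP}, with the only new ingredient being the identification of the tangent process with a Malliavin derivative taken with respect to $\overline{W}$ alone. First I treat smooth test functions $\phi\in\Cc^1_{b,\Cc^\gamma,{\rm Lip}}(\Cc^\gamma_\infty)$. I will define the tangent process $\bY^h$ of $V^{\bv}$ exactly as in \eqref{eq:tangent}, with $W$ replaced by $B$. Proposition~\ref{prop:Frechet} applies verbatim, since $B$ is a Brownian motion with covariance $\bm\rho\bm\rho^\top$, which only changes constants. Differentiating $X^{\bv}$ with respect to $\bv$ then gives
\[
\partial_{\bv}X^{\bv}_t(h)=\int_\sft^t \zeta'(s,V^{\bv}_s)\bY^h_s\,\D W_s-\int_\sft^t \zeta\zeta'(s,V^{\bv}_s)\bY^h_s\,\ds =:\mathcal{Z}^h_t .
\]
This uses $\zeta\in\Cc^1_b$ together with $L^p$ estimates of the remainder, as in the appendix proof of Proposition~\ref{prop:Frechet}. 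The chain rule of \cite[Proposition 3.8]{pronk2014tools} then yields $D\Phi(\bv)(h)=\EE[D\phi(X^{\bv})(\mathcal{Z}^h)]$, with the interchange of derivative and expectation justified as in Step~1 of Theorem~\ref{th:main_IBP}.

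Second, I will compute the Malliavin derivative $\overline{\Df}$ with respect to $\overline{W}$ only, treating $W$ as independent and fixed. Since $B=\overline\rho\,\overline W+\rho W$, the analogue of Lemma~\ref{lemma:MalliavinD} shows that $\overline{\Df}_r V^{\bv}$ solves \eqref{eq:Malliavin_SVE} with initial term $K_\sigma(t,r)\sigma(r,V_r^{\bv})\overline\rho$. The crucial point is that $W$ does not depend on $\overline W$. Hence $\overline{\Df}_r X^{\bv}_t$ has no boundary term and equals
\[
\int_r^t\zeta'(s,V^{\bv}_s)\overline{\Df}_rV^{\bv}_s\,\D W_s-\int_r^t \zeta\zeta'(s,V^{\bv}_s)\overline{\Df}_rV^{\bv}_s\,\ds .
\]
Now I choose $\eta_r=\overline\rho^{-1}\xi(r,V^{\bv}_r)h^*(r)$. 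The Fubini argument of Lemma~\ref{lemma:equality_Y_DX} gives $\langle\overline{\Df}V^{\bv}_t,\eta\rangle=\bY^h_t$ for all $t$. Applying stochastic Fubini once more, I expect $\langle\overline{\Df}X^{\bv}_t,\eta\rangle=\mathcal{Z}^h_t$ for all $t$, and the same argument works in the path sense. Malliavin integration by parts with respect to $\overline W$, with the adjoint being the Itô integral against $\overline W$, then produces \eqref{eq:rvol_IBP} for smooth $\phi$.

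Third, I extend to general $\phi\in\Bb^\varrho(\Cc^\gamma_\infty)$ by the density argument of Step~2 of Theorem~\ref{th:main_IBP}. This relies on Lemma~\ref{lemma:density_phi} applied to $X^{\bv}$, which requires $X^{\bv}\in L^q(\Omega;\Cc^\gamma)$ for all $q$. That holds because $\zeta$ is bounded, so $X^{\bv}$ is a semimartingale with bounded coefficients and has $\gamma$-Hölder paths for every $\gamma<1/2$, in particular for $\gamma<H\wedge 1/2$. If $H>1/2$, one takes $\gamma$ below $1/2$, or simply notes that the relevant estimates hold. The weight is controlled by $\norm{\overline\rho^{-1}}\norm{\xi}_\infty\norm{h}_\Hh$, so the Cauchy--Schwarz estimates \eqref{eq:est_PhinPhi_m}--\eqref{eq:frechet_density} carry over unchanged.

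The main obstacle I anticipate is the second step. I must check carefully that the Malliavin derivative with respect to $\overline W$ commutes with the $\D W$ integral, and that the ordering of integrals in the Fubini swap is correct given the adaptedness constraints ($\overline{\Df}_rV_s=0$ for $s\le r$). The key point is that $\overline\rho$ being invertible is exactly what allows $\eta$ to reproduce the required initial term $K_\sigma\sigma h^*$.
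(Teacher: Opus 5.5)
Your proposal is correct and follows the paper's proof essentially step for step. You identify the tangent processes of $V^{\bv}$ and $X^{\bv}$ with $\langle \Df^{\overline{W}}V^{\bv}_t,\eta\rangle$ and $\langle \Df^{\overline{W}}X^{\bv}_t,\eta\rangle$ for $\eta_r=\overline{\rho}^{-1}\xi(r,V^{\bv}_r)h^*(r)$, with the $\D W$-integral contributing no boundary term because $W$ is independent of $\overline{W}$. You then conclude by Malliavin duality with respect to $\overline{W}$ and extend to $\Bb^{\varrho}(\Cc^\gamma_\infty)$ exactly as in Step 2 of Theorem~\ref{th:main_IBP}. Your side observation is also sound: the density step needs $X^{\bv}\in L^q(\Omega;\Cc^\gamma)$, which forces $\gamma<1/2$ since $X^{\bv}$ is a Brownian semimartingale, and the paper leaves this implicit when $H>1/2$.
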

\begin{remark}\
\begin{enumerate}
    \item[i)]  In the common one-dimensional case~$d=1$ we require that~$\overline{\rho}:=\sqrt{1-\rho^2}\neq0$, or equivalently that the correlation~$\D\langle W,B\rangle_t = \rho\in(-1,1)$.
    \item[ii)]  Once again we can work on $\Xx$ instead of $\Cc^\gamma$ provided $\phi$ only acts on $X_T^{\bv}$ or $\phi\in\Cc^1_{b,\Xx}(\Xx)$, see Remark~\ref{rem:mainIBP}.  
    \item[iii)] We expect the fractional IBP (Theorem~\ref{thm:frac_IBP}) and the second order IBP (Proposition~\ref{prop:IBP_2}) to follow suit in this model as well. As can be seen below, the proof is very similar to Theorem~\ref{th:main_IBP}.
\end{enumerate}
\end{remark}
\begin{proof}
Assume for now that $\phi\in\Cc^1_{b,\Cc^\gamma,{\rm Lip}}(\Cc^\gamma)$. 
We denote~$\Df^{\overline{W}}_r B_s = \big( \Df^{\overline{W}^{(j)}}_r B_s^{(i)}\big)_{i,j=1}^d$ the $\RR^{d\times d}$ Malliavin derivative with respect to the Brownian motion~$\overline{W}$ and we observe that~$\Df^{\overline{W}}_r B_s=\overline{\rho} \one_{r<s}$. 

Let us denote by~$Y^h,\,Z^h$ the derivatives of~$\bv\mapsto X^{\bv}$ and~$\bv\mapsto V^{\bv}$, respectively, in the direction~$h\in\Hh$. Identically to the analysis presented in  Lemma~\ref{lemma:equality_Y_DX} we observe that~$Z_t^{h}$ is equal to~$\int_{\sft}^T \Df^{\overline{W}}_s V_t^{\bv} (\sigma(s,V^{\bv}_s)\overline{\rho})^{-1}h^*(s) \ds$
as they both solve the SVE
\begin{align*}
    Z_t = \int_{\sft}^t K_\sigma(t,s)h^*(s) \,\ds + \int_{\sft}^t K_b(t,s)\nabla b(s,V_s^{\bv}) Z_s \ds + \int_{\sft}^t K_\sigma(t,s)\nabla \sigma^{(j)}(s,V_s^{\bv}) Z_s \D B_s^{(j)}.
\end{align*}
As a consequence, we deduce that the Fréchet and Malliavin derivatives of the log-price also match since $\overline{W}$ and $W$ are independent:
\begin{align*}
    Y_t^{h} 
    &= \int_\sft^t \nabla \zeta(s,V_s^{\bv}) Z_s^{h} \D W_s - \half \int_\sft^t \nabla \zeta^2(s,V_s) Z_s^{h} \ds
    =\int_{\sft}^T \Df^{\overline{W}}_s X_t^{\bv}(\sigma(s,V^{\bv}_s)\overline{\rho})^{-1} h^*(s)\ds .
\end{align*}
Finally we obtain the integration by parts formula as in Equation~\eqref{eq:IBP_computation}
\begin{align*}
    D \Phi(\bv)(h)
    = \EE\left[D\phi(X^{\bv})(Y^h) \right]
    & = \EE\left[\int_\sft^T D \phi(X^{\bv})( \Df^{\overline{W}}_r X^{\bv}) \big(\sigma(r,V_r^{\bv})\overline{\rho}\big)^{-1} h^*(r)\dr\right]\\
    &    = \EE\left[\phi(X^{\bv}) \int_\sft^T \Big\langle(\overline{\rho})^{-1} \xi(r,V_r^{\bv}) h^*(r) ,\D \overline{W}_r\Big\rangle\right].
\end{align*}
This result extends to $\phi\in\Bb^\varrho(\Cc^\gamma_\infty)$ in exactly the same way as Step 2 of the proof of Theorem~\ref{th:main_IBP}.
\end{proof}

\appendix
\section{Technical proofs}\label{sec:appendix}
\subsection{Proof of Proposition \ref{prop:Frechet}}\label{subsec:proof_PropFrechet}
The proof relies on the following estimate.
\begin{lemma}\label{lemma:cty_Z}
   Let Assumptions \ref{as:kernel} and~\ref{as:coefs} hold.
    Let~$h\in\Xx$. For all~$p\ge2$, there is a constant~$C>0$ such that for all~$\sft\le t'\le t\le T$,
    \begin{align*}
        \EE\abs{\bZ^{h}_t-\bZ^{h}_{t'}}^p \le C \norm{h}_{\infty}^{2p} (t-t')^{H(p-2)}.
    \end{align*}
\end{lemma}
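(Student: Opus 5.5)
We would prove Lemma~\ref{lemma:cty_Z} in two stages: first a pointwise moment bound $\sup_{t\in\Tt}\EE\abs{\bZ^{h}_t}^p\lesssim\norm{h}_\infty^{2p}$, and then the increment bound, obtained by interpolating this pointwise bound with a cruder Hölder-type increment estimate.

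\emph{Stage 1: a linearised equation for $\bZ^h$.} Write $\Delta_t:=\bX^{\bx+h}_t-\bX^{\bx}_t$. Subtracting the equations for $\bX^{\bx+h}$, $\bX^{\bx}$ and $\bY^{h}$, the $h$ terms cancel and
\begin{align*}
\bZ^h_t=\int_\sft^t K_b(t,s)\big[b(s,\bX^{\bx+h}_s)-b(s,\bX^{\bx}_s)-\nabla b(s,\bX^{\bx}_s)\bY^h_s\big]\ds+(\text{the analogous }\D W\text{ term}).
\end{align*}
Split each bracket as
\begin{align*}
\big[b(s,\bX^{\bx+h}_s)-b(s,\bX^{\bx}_s)-\nabla b(s,\bX^{\bx}_s)\Delta_s\big]+\nabla b(s,\bX^{\bx}_s)\bZ^h_s .
\end{align*}
The first piece is a Taylor remainder. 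Here we would need $\nabla b,\nabla\sigma$ to be Lipschitz, or at least uniformly continuous. If only $\Cc^{0,1}_b$ is available, we would assume a Lipschitz gradient, as the factor $\norm{h}_\infty^{2p}$ suggests. Under that assumption the remainder is bounded by $C\abs{\Delta_s}^2$. Jensen and BDG then give
\begin{align*}
\EE\abs{\bZ^h_t}^p\lesssim\Big(\int_\sft^t K_H(t,s)^2\ds\Big)^{p/2-1}\int_\sft^t K_H(t,s)^2\big(\EE\abs{\Delta_s}^{2p}+\EE\abs{\bZ^h_s}^p\big)\ds .
\end{align*}
We need the preliminary estimate $\sup_s\EE\abs{\Delta_s}^{2p}\lesssim\norm{h}_\infty^{2p}$, which is the bound \eqref{eq:bound_DeltaX} referenced in the paper. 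It follows from the same Grönwall argument: $\EE\abs{\Delta_t}^q\lesssim\norm{h}_\infty^q+\int K_H(t,s)^2\,\EE\abs{\Delta_s}^q\ds$. The Volterra--Grönwall Lemma~\ref{lemma:Grownall}, with the resolvent bound $R_{C,H}(t,s)\lesssim(t-s)^{2H-1}$, then yields $\sup_t\EE\abs{\bZ^h_t}^p\lesssim\norm{h}_\infty^{2p}$.

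\emph{Stage 2: the increment.} For $t'<t$, write $\bZ^h_t-\bZ^h_{t'}$ by the usual decomposition. There is a term with kernel differences $K(t,s)-K(t',s)$ on $[\sft,t']$ and a term with $K(t,s)$ on $[t',t]$, each applied to the integrand $F_s$ (the Taylor remainder plus $\nabla b\,\bZ^h$). BDG and Jensen together with Assumption~\ref{as:kernel} give
\begin{align*}
\EE\abs{\bZ^h_t-\bZ^h_{t'}}^p\lesssim\sup_s\EE\abs{F_s}^p\,\Big[\Big(\int_\sft^{t'}\abs{K_H(t,s)-K_H(t',s)}^2\ds\Big)^{p/2}+\Big(\int_{t'}^tK_H(t,s)^2\ds\Big)^{p/2}\Big] .
\end{align*}
By Stage~1 we have $\sup_s\EE\abs{F_s}^p\lesssim\norm{h}_\infty^{2p}$, and the kernel integrals are $\lesssim(t-t')^{H}$ each, so this gives $\norm{h}_\infty^{2p}(t-t')^{Hp}$. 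Since $(t-t')^{Hp}\le T^{2H}(t-t')^{H(p-2)}$, the stated (weaker) exponent follows.

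The main obstacle is the regularity required on $b,\sigma$. With only bounded Lipschitz $\nabla b$, and no Lipschitz gradient, the Taylor remainder is only $o(\abs{\Delta})$, and the factor $\norm{h}^{2p}$ fails. The weaker exponent $H(p-2)$ in the statement hints that the author interpolates, using Hölder's inequality, between $\EE\abs{\bZ}^{p}$ controlled through $\abs{\Delta}^2$ and a $p-2$ moment of the raw increments of $\bX^{\bx+h}$, $\bX^{\bx}$ and $\bY^h$, each of order $(t-t')^{H}$. If the direct route fails, we would adopt that splitting: write $\abs{\cdot}^p=\abs{\cdot}^2\abs{\cdot}^{p-2}$, bound the squared factor by $\norm{h}^{4}$-type terms and the remaining factor by Hölder moments.
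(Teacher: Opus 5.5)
Your proposal is correct and follows essentially the paper's route. It uses the same split of the integrand into a Taylor remainder controlled by $\abs{\Delta_h\bX^{\bx}_s}^2$ plus $\nabla b\,\bZ^h_s$; the paper relies on the same unstated Lipschitz-gradient assumption in \eqref{eq:delta_f}. It also uses the same Volterra--Gr\"onwall bound $\EE\abs{\bZ^h_t}^p\lesssim\norm{h}_\infty^{2p}$ obtained from the case $t'=\sft$, and the same decomposition of the increment into a $[t',t]$ piece and a kernel-difference piece.

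The one difference is that you feed the uniform moment bound into BDG/Minkowski and get the sharper rate $(t-t')^{Hp}$. The paper's weaker exponent $H(p-2)$ comes from the H\"older splitting \eqref{eq:trick_Lp_bound}: there the leftover factor $\EE\int\KH^2\big(\abs{\Delta_h\bX^{\bx}_s}^{2p}+\abs{\bZ^h_s}^p\big)\ds$ is only bounded by a constant. So your interpolation fallback is not needed.
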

\begin{proof}
\emph{\underline{Step 1.}} For all~$t\in\Tt$ we denote~$\Delta_h \bX^{\bx}_t:=\bX^{\bx+h}_t - \bX^{\bx}_t$. 
By Lipschitz continuity of~$b,\sigma$, Jensen, BDG and H\"older's inequalities yield
for all~$t\in\Tt$ and all~$p\ge 2$,
\begin{equation}\label{eq:comps_estimate_DeltahX}
\begin{aligned}
    \EE\abs{\Delta_h \bX^{\bx}_t}^p
    &\lesssim  \abs{h(t)}^p + \EE\left(\int_\sft^t \abs{K_b(t,s)}\abs{b(s,\bX^{\bx+h}_s)-b(s,\bX^{\bx}_s)} \ds\right)^{p}\\
    &\qquad +  \EE\left(\int_\sft^t \abs{K_\sigma(t,s)}^2\abs{\sigma(s,\bX^{\bx+h}_s)-\sigma(s,\bX^{\bx}_s)}^2 \ds\right)^{p/2}\\
    &\lesssim \abs{h(t)}^p + \int_\sft^t \abs{K_b(t,s)}\EE\abs{b(s,\bX^{\bx+h}_s)-b(s,\bX^{\bx}_s)}^p \ds \\
    &\qquad + \int_\sft^t \abs{K_\sigma(t,s)}^2\EE\abs{\sigma(s,\bX^{\bx+h}_s)-\sigma(s,\bX^{\bx}_s)}^p \ds\\
    &\lesssim \abs{h(t)}^p + \int_\sft^t \abs{\KH(t,s)}^2\EE\abs{\Delta_h \bX^{\bx}_s}^p\ds.
\end{aligned}
\end{equation}
A similar computation holds for the drift term. In the second inequality, we used H\"older's inequality in the following fashion, where for any process~$F$:
\begin{align}\label{eq:trick_Lp_bound}
    \EE\left(\int_{\sft}^t \abs{\Ks(t,s)}^2 \abs{F(s)}^2 \ds\right)^{p/2} 
    &= \EE\left(\int_{\sft}^t \abs{\Ks(t,s)}^{2-4/p}\abs{\Ks(t,s)}^{4/p}  \abs{F(s)}^2 \ds\right)^{p/2} \\
    &\le \left(\int_{\sft}^t \abs{\Ks(t,s)}^{2}\ds \right)^{\frac{p-2}{2}} \EE\int_{\sft}^t \abs{\Ks(t,s)}^2\abs{F(s)}^p\ds.
    \nonumber
\end{align}
For a constant~$C>0$, recall that~$R_{C,H}$ \eqref{eq:Res_powerlaw} is the resolvent of~$C\KH^2$. The generalised Gr\"onwall lemma (Lemma~\ref{lemma:Grownall}) thus entails
\begin{align}\label{eq:bound_DeltaX}
    \EE\abs{\Delta_h \bX^{\bx}_t}^p \lesssim \abs{h(t)}^p + \int_r^t R_{C,H}(t,s) \abs{h(s)}^p \ds \lesssim \norm{h}^p_{\infty}.
\end{align}
\emph{\underline{Step 2.}} For~$\chi=b,\sigma^{(j)},j=1,\cdots, m$, denote
\begin{align*}
    \widetilde{\chi}^{ \bx}_h(s):=\int_0^1 \nabla \chi\big(s,\bX^{\bx}_s + \lambda\Delta_h \bX^{\bx}_t\big)\D \lambda    
\end{align*}
such that~$\chi(s,\bX^{ \bx+h}_s)-\chi(s,\bX^{\bx}_s)=\widetilde{\chi}^{ \bx}_h(s)\Delta_h \bX^{\bx}_s$ and by Lipschitz continuity, \begin{align}\label{eq:delta_f}
    \abs{\nabla \chi(s,\bX_s^{\bx})-\widetilde{\chi}^{ \bx}_h(s)}\lesssim \abs{\Delta_h \bX^{\bx}_s}.
\end{align} Since~$t,t',\bx,h$ are fixed, we omit them from the sub- and superscripts for the remainder of the proof. With the tools recently introduced we have
\begin{align*}
    \bZ_t &= \Delta\bX_t - \bY_t\\
    &=\int_{\sft}^t K_b(t,s) \Big(\widetilde{b}(s)\Delta \bX_s - \nabla b(s,\bX_s)\bY_s\Big)\ds + \int_{\sft}^t K_\sigma(t,s) \Big(\widetilde{\sigma}^{(j)}(s)\Delta \bX_s - \nabla \sigma^{(j)}(s,\bX_s)\bY_s\Big)\D W^{(j)}_s.
\end{align*}
With~$t'<t$ this entails for all~$p\ge2$,
\begin{align*}
    \EE\abs{\bZ_t-\bZ_{t'}}^p 
    &\lesssim \EE\abs{\int_{t'}^t K_b(t,s) \Big(\widetilde{b}(s)\Delta \bX_s - \nabla b(s,\bX_s)\bY_s\Big)\ds}^p \\
    &+ \EE\abs{\int_{t'}^t K_\sigma(t,s) \Big(\widetilde{\sigma}^{(j)}(s)\Delta \bX_s - \nabla \sigma^{(j)}(s,\bX_s)\bY_s\Big)\D W^{(j)}_s}^p\\
    & + \EE\abs{\int_{\sft}^{t'} \big(K_b(t,s)-K_b(t',s)\big) \Big(\widetilde{b}(s)\Delta \bX_s - \nabla b(s,\bX_s)\bY_s\Big)\ds}^p\\
    &+ \EE\abs{\int_{\sft}^{t'} \big(K_\sigma(t,s)-K_\sigma(t',s)\big) \Big(\widetilde{\sigma}^{(j)}(s)\Delta \bX_s - \nabla \sigma^{(j)}(s,\bX_s)\bY_s\Big)\D W^{(j)}_s}^p\\
    &=: \bm{A}_1 + \bm{A}_2 + \bm{A}_3 +\bm{A}_4.
\end{align*}
We deal with~$ \bm{A}_2$ and~$ \bm{A_4}$ and note that~$ \bm{A}_1$ and~$ \bm{A}_3$ are dealt with in the same way, using Jensen's inequality instead of BDG inequality. Combined with H\"older's inequality and~\eqref{eq:delta_f}, the latter yields
\begin{align*}
    \bm{A}_2 
    &\lesssim \EE\left(\int_{t'}^t \abs{K_\sigma(t,s)}^2 \abs{\widetilde{\sigma}^{(j)}(s)-\nabla\sigma^{(j)}(s,\bX_s)}^2 \abs{\Delta \bX_s}^2 \ds \right)^{p/2}\\
    &\quad + \EE\left(\int_{t'}^t \abs{K_\sigma(t,s)}^2 \abs{\nabla\sigma^{(j)}(s,\bX_s)}^2 \abs{\Delta \bX_s-\bY_s}^2 \ds \right)^{p/2}\\
    &\lesssim \left(\int_{t'}^t \abs{\KH(t,s)}^{2}\ds \right)^{\frac{p-2}{2}} \EE\int_{t'}^t \abs{\KH(t,s)}^2 \Big( \abs{\Delta \bX_s}^{2p} + \abs{\bZ_s}^p\Big) \ds. 
\end{align*}
Similarly, exploiting~$\abs{\Ks(t,s)-\Ks(t',s)}\le \abs{\KH(t',s)}$, we get
\begin{align*}
    \bm{A}_4 \lesssim \left(\int_{\sft}^{t'} \abs{\Ks(t,s)-\Ks(t',s)}^{2}\ds \right)^{\frac{p-2}{2}} \EE \int_{\sft}^{t'} \abs{\KH(t',s)}^2 \Big( \abs{\Delta \bX_s}^{2p} + \abs{\bZ_s}^p\Big) \ds.
\end{align*}
In particular, for $t'=\sft$ Equation \eqref{eq:bound_DeltaX} entails
\begin{align*}
    \EE\abs{\bZ_t}^p \lesssim\int_{\sft}^t \abs{\KH(t,s)}^2\Big(\norm{h}_{\infty}^{2p} + \EE\abs{\bZ_s}^p\Big) \ds,
\end{align*}
which yields by Lemma~\ref{lemma:Grownall} (item 3) and the integrability of~$R_{C,H}$ that~$\EE\abs{\bZ_t}^p \lesssim  \norm{h}_{\infty}^{2p}.$
Gathering all the estimates together and leveraging Assumption~\ref{as:kernel} we obtain
\begin{align*}
    \EE\abs{\bZ_t-\bZ_{t'}}^p 
    &\lesssim \norm{h}_{\infty}^{2p}\left[\left(\int_{t'}^t \KH(t,s)^2\ds\right)^{\frac{p-2}{2}} + \left(\int_{\sft}^{t'} \abs{K_H(t,s)-K_H(t',s)}\ds\right)^{\frac{p-2}{2}}\right]\\
    &\lesssim \norm{h}_{\infty}^{2p} (t-t')^{H(p-2)},
\end{align*}
which yields the claim.
\end{proof}

\begin{proof}[Proof of Proposition~\ref{prop:Frechet}]
    Lemma~\ref{lemma:cty_Z} and Kolmogorov continuity criterion, see e.g. \cite[Chapter 1, Theorem 2.1]{revuz2013continuous}, yield
    \begin{align*}
        \EE\left[\sup_{\sft\le t\neq t'\le T} \left(\frac{\bZ_t^{h}-\bZ_{t'}^{h}}{\abs{t-t'}^\alpha} \right)^p \right] \le C' \norm{h}^{2p}_{\infty,r},
    \end{align*}
    for all~$\alpha\in[0,H-(2H+1)/p)$, $p>2+1/H$ and where~$C'>0$ depends only on $C,p$ and~$\alpha$. Setting $t'=\sft$ gives in particular~$\EE\norm{\bZ^{h}}_{\infty}^p\lesssim \norm{h}^{2p}_{\infty}$. Considering a sequence~$(h^i)_{i\in\NN}$ converging to zero, the claim is proved. 
\end{proof}

\subsection{Proof of Lemma \ref{lemma:MalliavinD}}\label{subsec:proof_DX}
\begin{proof}
    This is a straightforward adaptation of~\cite[Theorem 2.2.1]{nualart2006malliavin}. Let us fix~$\sft,\bx$. Consider the Picard approximations given by~$\bX^0_t=\bx(t)$ and for $n\in\NN$:
    \begin{align*}
        \bX^{n+1}_t= \bx(t) + \int_\sft^t K_b(t,s)b(s,\bX^n_s)\ds + \int_\sft^t K_\sigma(t,s)\sigma(s,\bX^n_s)\D W_s.
    \end{align*}
    By induction, one can prove that for all~$n\in\NN$, $\bX^n_t\in\DD^{1,2}$ for all~$0\le r< t\le T$, $ \psi_n(r,t) :=
        \EE[\abs{\Df_r \bX^n_t}^2] <\infty$
    and for some constants~$c_1,c_2>0$,
    \begin{align}\label{eq:psi_ineq}
        \psi_{n+1}(r,t) \le c_1 \abs{K_\sigma(t,r)}^2 + c_2 \int_r^t \KH(t,s)^2\psi_n(r,s)\ds.
    \end{align}
    These properties can be deduced from the equality
    \begin{align*}
        \Df_r \bX_t^{n+1} = \int_r^t K_b(t,s) \nabla b(s,\bX_s^n) \Df_r \bX^n_s\ds + K_\sigma(t,r)\sigma(r,\bX^{n}_r) + \int_r^t K_\sigma(t,s) \nabla \sigma^{(j)}(s,\bX^n_s) \Df_r \bX^n_s\D W_s^{(j)}.
    \end{align*}
    We know, for instance from~\cite[Theorem 3.1]{zhang2010stochastic}, that this Picard scheme converges: $\lim_{n\to\infty}\EE\abs{\bX^n_t-\bX_t^{\bx}}^2=0$. Noting that~$\psi_0=0$, Eq. \eqref{eq:psi_ineq} entails
    \begin{align*}
        \sup_{n\ge0}\psi_{n+1}(r,t) &\le c_1 \KH(t,r)^2 + c_2\int_r^t \KH(t,s)^2 \sup_{n\ge0}\psi_n(r,s)\ds \\
        &=c_1 \KH(t,r)^2 + c_2\int_r^t \KH(t,s)^2 \sup_{n\ge0}\psi_{n+1}(r,s)\ds,
    \end{align*}
    and invoking the generalised Gr\"onwall lemma we obtain
    \begin{align}\label{eq:psin_bound}
        \sup_{n\ge1}\psi_n(r,t)\le c_1 \KH(t,r)^2 + c_1 \int_r^t R_{c_2,H}(t,s)\KH(s,r)^2\dr<\infty,
    \end{align}
    where we recall that $R_{c_2,H}$ is the resolvent of $c_2K_H^2$. 
    The uniform bound of the derivatives and the $L^2$-convergence entail that~$\bX^{\bx}_t\in\DD^{1,2}$. It suffices to differentiate both sides of~\eqref{eq:main_SVE} to obtain~\eqref{eq:Malliavin_SVE}. Furthermore, $\EE\abs{\Df_r \bX_t}^2$ inherits the estimate~\eqref{eq:psin_bound}; since the integral is uniformly bounded over $0\le r<t\le T$ we obtain $\EE\abs{\Df_r \bX_t}^2\le C K_H(t,r)^2$ for some constant $C>0$. In the case $r\ge t$ it is straightforward that $\bX^n_t=0$ for all $n\in\NN$ and thus $\bX_t=0$ as well. 
\end{proof}


\bibliographystyle{alpha}
\bibliography{bib}

\end{document}